\newcommand{\proof}{\vskip 5truemm \noindent{\textsc Proof.}~}
\newcommand{\finproof}{{\
}\hfill\rule{2mm}{2mm}}
\newtheorem{rem}{Remark}[section]
\newtheorem{thm}{Theorem}[section]
\newtheorem{coro}{Corollary}[section]
\newtheorem{lem}{Lemma}[section]
\newtheorem{defi}{Definition}[section]
\numberwithin{equation}{section}
\begin{document}

\title{\bf BV right continuous solutions of differential inclusions involving  time dependent maximal monotone operators}

\author{Dalila Azzam-Laouir
 \footnote{LAOTI, FSEI, Universit\'e Mohamed Seddik Benyahia de Jijel, Alg\'erie.}
\hskip 4pt Charles Castaing\footnote{  C. Castaing IMAG, Univ
Montpellier, CNRS, Montpellier II, 34095, Case courrier 051,
Montpellier Cedex 5, France. E-Mail: charles.castaing@gmail.com}
\hskip 4pt M. D. P. Monteiro Marques \footnote{CMAF and Faculdade de
Ciencias de Lisboa, Av. Prof. Gama Pinto 2, P. 1600 Lisboa,
Portugal. E-Mail: mmarques@lmc.fc.ul.pt}}

\maketitle
\begin{abstract} This paper is devoted to the study of  evolution problems
involving time dependent  maximal monotone operators, which are
 right continuous and bounded in variation   with respect to the
 Vladimirov's pseudo distance. Several variants and applications are
 presented.

\end{abstract}
\vskip4mm \noindent{\footnotesize\textbf{Keywords:} Bounded
variation, differential measure, maximal monotone operator,
pseudo-distance, right continuous.}

\vskip4mm \noindent{\footnotesize\textbf{AMS Subject
Classifications: 2010}: 34H05, 34K35, 60H10 28A25, 28C20}

\vskip6mm

\section{Introduction}
Let $E$ be a separable Hilbert space,   and $I=[0,T]$ ($T>0$). In
this paper, we are mainly interested by the existence of bounded
variation and right continuous (BVRC) solutions to evolution
inclusion of the form
\begin{equation}\label{1.1} -Du(t) \in A(t) u (t) +F(t,
u(t))\;\;a.e.,\;\;\;u(0)=u_0,\end{equation} governed by a time
dependent maximal monotone operator  $A(t)$, in the vein of
Kunze-Marques work \cite{KM}, with  a weakly compact, convex valued
perturbation $F : I \times E \rightrightarrows E$. For this purpose,
we consider the existence problem of BVRC  solutions to \eqref{1.1}
 by assuming that  $t \mapsto  A(t) $ is of bounded variation
 and right continuous, in the sense  that
 there exists a function  $r:  I \rightarrow [0, \infty[$,
which is right continuous on $[0, T[$  and nondecreasing  with $r(0)
= 0$ and  $r(T) < \infty$ such that
$$ dis(A(t), A(s)) \leq dr(]s, t]) = r(t)-r(s), \hskip 3pt 0\leq s \leq t \leq T, $$
where  $dis(\cdot, \cdot)$ is the pseudo-distance between maximal
monotone operators introduced by Vladimirov \cite{Vla}; see relation
\eqref{2.1}.

When $C(t)$ is a closed convex moving set in $E$, then with $A(t) =
\partial \delta _{C(t)} = N_{C(t)}$, we have
$$dis\big(A(t), A(s)\big) = d_H\big(C(t), C(s)\big), \hskip 2pt t, s \in I,$$ here $d_H$ denotes the Hausdorff
distance. In this regard, our study extends some related results in
the  evolution problem  governed by the  convex sweeping process of
the form \begin{equation}\label{1.2} -Du(t) \in  N_{C(t)} (u (t))
+F(t, u(t))\;\;\;a.e.,\;\;\; u(0)=u_0.\end{equation} There is an
intensive work concerning the existence of solutions  to the
sweeping process. This subject is vast; see \cite{adly-hadd-thib,
M1, Mon, M-SAC}  and the references therein. However, there are  a
few results concerning the existence of BVRC solution to the
sweeping process, see Benabdellah et al \cite{BCSS},
Castaing-Marques \cite{CM}, Adly et al \cite {adly-hadd-thib,ANT},
Edmond-Thibault \cite{ET1}, Nacry et al \cite{NNT}. 

Let $\lambda$ be the Lebesgue measure on $I$ and $dr$  the Stieljes
measure associated with $r$.  We set $\nu := \lambda +dr$ and
$\frac{ d\lambda}{d\nu}$ the density of $ \lambda$ with respect to
the measure $\nu$.  A function $u : I \rightarrow E$ is BVRC if $u$
is of bounded variation and right continuous. By BVRC solution to
\eqref{1.1} we mean that,   giving  $u_0 \in D(A(0))$, there exists
a BVRC  mapping $u: I\to E$ such that
$$
\begin{cases}
u(0)=u_0;\\
 u(t)\in D(A(t))\;\;\;\forall t\in I;\\
   -\displaystyle\frac{du}{d\nu }(t)\in A(t) u(t)+ F(t, u(t))   \frac{d\lambda} {d\nu}(t)  \;\;\;d\nu-a.e.\,t\in
   I,
\end{cases}
$$
here $\frac{du}{d\nu }(t)$ denotes the density of $u$ relatively to
$\nu$. We aim to present the problem of   existence and uniqueness
of BVRC solutions to \eqref{1.1} according to the nature of  the
perturbation $F$, and its applications such as  Skorohod problem,
relaxation, second order evolution,  sweeping process. To the best
of the author's knowledge, this problem of existence and uniqueness
of BVRC solution of \eqref{1.1}  has never been considered in  the
literature before. The study of BV solutions of \eqref{1.1} and
\eqref{1.2} has an increasing interest. Many attempts have been made
to generalize  evolution  inclusion \eqref{1.2}  containing
deterministic or stochastic perturbations \cite{CMPR, FS}.
 Actually, \eqref{1.1} is the first study  of   convex,  weakly compact valued perturbations
 of evolution inclusion governed  by
 time dependent maximal monotone operators  involving the existence of BVRC
 solutions.  For some recent results dealing with existence of absolutely continuous, Lipschitz or continuous with bounded
 variation (BVC) solutions of differential inclusions governed by time dependent or time and state dependent maximal
 monotone operators, we refer to \cite{ABCM1, ABCM2, AB, ACM, BAC, CMRdF1, Ken, Le, SAM,
 TOL1}.

The paper is organized as follows. In section 2, we recall some
preliminary results needed later. In section 3, our main theorems
state the existence of bounded variation and right continuous
solutions to the evolution inclusion \eqref{1.1} when $t \mapsto
A(t) $ is BVRC and the perturbation $F:  I\times E\rightrightarrows
E$ is convex and weakly compact valued,  separately  scalarly upper
semi continuous on $E$ and measurable on $I\times E$, with sharp
application to the existence and uniqueness of BVRC solution to  the
evolution inclusion \begin{equation*}-Du(t) \in A(t) u(t) + f(t,
u(t))\;\;\;a.e.,\;\;\;u(0)=u_0,\end{equation*} where $f : I\times E
\rightarrow E$ is single valued, Borel-measurable, and satisfies a
Lipschitz  condition. Here uniqueness is provided using a specific
result due to Moreau \cite{MV} on the property of BVRC mappings and
some specific Gronwall type lemma. In section 4, we provide some
applications to second order and fractional evolutions, Skorohod
problem and relaxation problem.

The obtained results are quite new with remarkable corollaries in
the setting of BVRC solutions, and they extend to  time dependent
BVRC maximal monotone operators  some nice results of Adly et al
\cite{adly-hadd-thib}, Edmond-Thibault \cite{ET1} and Nacry et al
\cite{NNT},   dealing with the BVRC solutions of convex and proximal
sweeping processes, and a result of Tolstonogov \cite{TOL},  dealing
with the BVC sweeping process.

\section{Notations and  Preliminaries}
In  the whole paper $I:=[0,T]$ $(T>0)$ is an interval of
$\mathbb{R}$ and  $E$ is a separable Hilbert space with the  scalar
product $\langle\cdot,\cdot\rangle$ and the associated norm
$\|\cdot\|$. $\overline{B}_E$ denotes the unit closed ball of $E$
and $r\overline{B}_E$ its closed ball of center 0 and radius $r>0$.
We denote by $\mathcal{L}(I)$ the sigma algebra on $I$,
$\lambda:=dt$ the Lebesgue measure and by $\mathcal{B}(E)$ (resp.
$\mathcal{B}(I))$ the Borel sigma algebra on $E$ (resp. on $I$). If
$\mu$ is a positive measure on $I$, we will denote by $L^p(I, E;
\mu)$ $p \in [1, +\infty[$ (resp. $p=+\infty)$, the Banach space of
classes of measurable functions $u:I\to E$ such that
$t\mapsto\|u(t)\|^p$ is $\mu$-integrable (resp. $u$ is
$\mu$-essentially bounded), equipped with its classical norm
$\|\cdot\|_p$ (resp. $\|\cdot\|_{\infty}$). We denote by
$\mathcal{C}(I,E)$ the Banach space of all continuous mappings $u :
I\to E$, endowed with the sup norm.
\\The excess between closed subsets $C_1 $ and $C_2 $ of $E$, is
defined by $ e(C_1, C_2):=\sup_{x\in C_1}d(x,C_2)$,
 and the   Hausdorff distance between them is given by
$$d_H(C_1, C_2):=\max\Big\{e(C_1, C_2), e(C_2, C_1)\Big\}.$$ The support function of $S\subset E$ is
defined by: $\delta^*(a, S):=\sup_{x\in S}\langle
a,x\rangle$, $\forall a\in E$.\\
If $X$ is a Banach space and $X'$ its topological dual,  we denote
by $\sigma(X, X')$ the weak topology on $X$, and by $\sigma(X', X)$
the weak* topology on $X'$.

Let $A: E \rightrightarrows E$ be a set-valued map. We denote by
$D(A)$, $R(A)$ and $Gr(A)$ its domain, range and graph. We say that
$A$ is monotone, if $ \langle y_1 -y_2, x_1 - x_2\rangle\ge 0$
whenever $x_i \in D(A)$, and $y_i \in A(x_i)$, $i = 1, 2$.
 In addition, we say that $A$ is a maximal monotone operator of $E$, if
its graph could not be contained properly in the graph of any other
monotone operator. By Minty's Theorem, $A$ is maximal monotone iff
$R(I_E+A)=E$, where  $I_E$ is the identity mapping of $E$.
\\
If $A$ is a maximal monotone operator of $E$, then  for every $x \in
D(A)$, $A(x)$ is nonempty closed and convex. We denote the
projection of the origin on the set $A(x)$ by $A^0(x)$.

 Let $\eta > 0$, then the resolvent and the Yosida approximation
 of $A$ are the
 well-known operators defined respectively by  $J_{\eta}^A= (I_E +
\eta A)^{-1}$ and $A_{\eta} = \frac{1}{ \eta} (I_E- J_{\eta}^A)$.
  These operators
are single-valued and defined on all of $E$, and we have
$J^A_{\eta}(x)\in D(A)$, for all $x\in E$. For more details about
the theory of maximal monotone operators we refer the reader to
\cite{Ba, brezis, Vra}.

 Let $A: D(A)\subset E\to 2^E$ and $B: D(B)\subset
E\to 2^E$ be two maximal monotone operators, then we denote by
$dis(A, B)$ the pseudo-distance between $A$ and $B$ defined by
\begin{equation}\label{2.1} dis(A,B)=\sup\bigg\{\frac{\langle y-y', x'-x\rangle}{1+\|
y\|+\| y'\|}:\;x\in D(A),\;y\in Ax,\;x'\in D(B),\;y'\in
Bx'\bigg\}.\end{equation}
Our main results are established under the following hypotheses:\\
$(H_1)$ there exists a function $r:I\to [0, +\infty[$ which is right
continuous on $[0, T[$ and nondecreasing with $r(0) = 0$  and
$r(T)<+\infty$ such that \begin{equation*}dis(A(t), A(s))\leq dr(]s,
t])=r(t)-r(s)\;\;\textmd{for}\;\;0\leq s\leq t\leq T.\end{equation*}
$(H_2)$ There exists a nonnegative real constant $c$ such that
\begin{equation*}\|A^0(t,x)\|\leq c(1+\| x\|)\;\;\textmd{for}\;\;t\in I,\;x\in
D(A(t)).\end{equation*} $(H_3)$ $\underset{t\in I}{\bigcup} D(A(t))$
is ball compact, i.e, its intersection with any closed ball of $E$
is compact. \vskip2mm

For the proof of our main theorems we will need some elementary
lemmas taken from reference \cite{KM}.
\begin{lem}\label{lem2.1}
Let $A$ be a maximal monotone operator of $E$. If $x\in
\overline{D(A))}$ and  $y\in E$ are such that
$$ \langle A^0(z)-y, z-x\rangle\geq 0\;\;\forall z\in D(A),$$
then $x\in D(A)$ and $y\in A(x)$.
\end{lem}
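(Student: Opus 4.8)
The plan is to prove the equivalent assertion that $(x,y)\in \mathrm{Gr}(A)$, exploiting the maximality of $A$ through Minty's theorem and the Yosida regularization introduced above. Since $R(I_E+A)=E$, for every $\lambda>0$ the point $x_\lambda:=J^A_\lambda(x+\lambda y)$ is well defined and lies in $D(A)$, while the associated Yosida element $y_\lambda:=A_\lambda(x+\lambda y)$ satisfies $y_\lambda\in A(x_\lambda)$ together with the identity $x_\lambda-x=\lambda(y-y_\lambda)$. The strategy is to let $\lambda\downarrow 0$ and to show that $x_\lambda\to x$ strongly and $y_\lambda\rightharpoonup y$ weakly; once this is in hand, the strong--weak (demi)closedness of the graph of the maximal monotone operator $A$ yields at once $x\in D(A)$ and $y\in A(x)$.

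First I would record the easy half, the convergence $x_\lambda\to x$. Because $J^A_\lambda$ is nonexpansive, $\|x_\lambda-J^A_\lambda(x)\|=\|J^A_\lambda(x+\lambda y)-J^A_\lambda(x)\|\le \lambda\|y\|\to 0$, while $J^A_\lambda(x)\to x$ as $\lambda\downarrow 0$ since $x\in\overline{D(A)}$. Hence $x_\lambda\to x$ strongly, and the identity $x_\lambda-x=\lambda(y-y_\lambda)$ shows in particular that $\lambda y_\lambda\to 0$.

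The heart of the argument, and the step I expect to be the main obstacle, is to control $y_\lambda$ and to identify its weak cluster points as $y$. Here the hypothesis enters: applying it at the admissible test point $z=x_\lambda\in D(A)$ and using $x_\lambda-x=\lambda(y-y_\lambda)$ gives, after dividing by $\lambda>0$, the key inequality $\langle A^0(x_\lambda)-y,\,y-y_\lambda\rangle\ge 0$. Combining this with the monotonicity of $A$ and with the defining projection property of the minimal section, namely $\langle A^0(x_\lambda),\,w-A^0(x_\lambda)\rangle\ge 0$ for every $w\in A(x_\lambda)$ and in particular for $w=y_\lambda$, I would derive an a priori bound on $\|y_\lambda\|$. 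Boundedness then permits the extraction of a subsequence with $y_\lambda\rightharpoonup\bar y$, and the strong--weak closedness of $\mathrm{Gr}(A)$ applied to $x_\lambda\to x$, $y_\lambda\rightharpoonup\bar y$, $y_\lambda\in A(x_\lambda)$ forces $\bar y\in A(x)$; in particular $x\in D(A)$.

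Finally I would upgrade $\bar y\in A(x)$ to $\bar y=y$, so that $y\in A(x)$ as claimed, by passing to the limit in $\langle A^0(x_\lambda)-y,\,y-y_\lambda\rangle\ge 0$ and comparing, through monotonicity, the limiting relation with the test inequalities furnished by the points $(z,A^0(z))$ now that $x$ is known to belong to $D(A)$. The delicate point throughout is that the hypothesis is formulated only along the minimal sections $A^0(z)$ rather than along arbitrary elements of $A(z)$; reconciling this with the non-minimal Yosida sections $y_\lambda\in A(x_\lambda)$ is exactly where the projection characterization of $A^0$ must be invoked, and constitutes the crux of the proof. \finproof
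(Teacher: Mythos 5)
Your text is a programme rather than a proof, and it breaks down exactly at the step you yourself call the heart of the argument. Note first that the paper contains no proof to compare against: Lemma \ref{lem2.1} is quoted from Kunze--Monteiro Marques \cite{KM}, so your argument has to stand entirely on its own. The preliminary step ($x_\lambda\to x$ strongly) is correct, but the crucial claim --- that an a priori bound on $\|y_\lambda\|$ follows by combining the inequality $\langle A^0(x_\lambda)-y,\,y-y_\lambda\rangle\ge 0$ (the hypothesis tested at $z=x_\lambda$) with the projection property $\langle A^0(x_\lambda),\,y_\lambda-A^0(x_\lambda)\rangle\ge 0$ and with monotonicity --- is never carried out, and in fact cannot be carried out with those ingredients. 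For $y\neq 0$, both inequalities are satisfied by $A^0(x_\lambda)=y$ and $y_\lambda=R\,y/\|y\|$ with $R\ge\|y\|$ arbitrarily large, so no algebraic combination of them bounds $\|y_\lambda\|$ from above; what they actually yield is only the lower bound $\langle y,y_\lambda\rangle\ge\frac34\|y\|^2$. Monotonicity does not repair this: applied to the pairs $(x_\lambda,y_\lambda)$ and $(x_\mu,y_\mu)$ it shows that $\|y-y_\lambda\|$ is nondecreasing as $\lambda\downarrow 0$ (so it may perfectly well blow up), and applied against a fixed pair $(z_0,w_0)\in Gr(A)$ it gives only $\|y_\lambda\|=O(1/\lambda)$. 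Worse, the bound you are after is not a technical lemma but the theorem itself: $\sup_\lambda\|y_\lambda\|<\infty$ is \emph{equivalent} to $x\in D(A)$ (one direction is your demiclosedness argument; conversely, if $x\in D(A)$ then $\|y_\lambda\|\le 2\|y\|+\|A^0(x)\|$ by the $\tfrac{2}{\lambda}$-Lipschitz property of $A_\lambda$). So at this point you are assuming what must be proved. A genuine proof has to exploit the hypothesis at points far from the one-parameter resolvent family; for instance, along the trajectory $u(t)=S(t)x$ of the semigroup generated by $-A$ the hypothesis gives $\frac{d^+}{dt}\frac12\|u(t)-x\|^2=-\langle A^0(u(t)),u(t)-x\rangle\le\|y\|\,\|u(t)-x\|$, hence $\|S(t)x-x\|\le\|y\|\,t$, and Brezis' regularization theory ($S(t)\overline{D(A)}\subset D(A)$ for $t>0$, plus the semigroup characterization of $D(A)$) then yields $x\in D(A)$. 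None of that machinery appears in your sketch.

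The closing step is also broken as written. You propose to pass to the limit in $\langle A^0(x_\lambda)-y,\,y-y_\lambda\rangle\ge 0$ with $y_\lambda\rightharpoonup\bar y$; but $A^0(x_\lambda)$ also converges at best weakly, and a scalar product of two weakly convergent sequences need not converge, so the limiting inequality is not available. Even if it were, the relation $\langle a-y,\,y-\bar y\rangle\ge 0$ with $a,\bar y\in A(x)$ does not imply $y\in A(x)$: take $A(x)=\mathbb{R}\times\{0\}$ in $\mathbb{R}^2$ (the normal cone to a vertical segment at a relative interior point), $y=(0,1)$, $a=(1,0)$, $\bar y=(-1,0)$; the inequality holds with equality, yet $y\notin A(x)$. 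The correct finish --- available only \emph{after} $x\in D(A)$ is established --- runs differently: monotonicity against the pair $(x,\bar y)$ gives $\langle y_\lambda-\bar y,\,y-y_\lambda\rangle\ge 0$, whence $\limsup\|y_\lambda\|^2\le\|\bar y\|^2$ and $y_\lambda\to\bar y$ strongly; monotonicity against all pairs $(x,v)$, $v\in A(x)$, then identifies $\bar y=P_{A(x)}(y)$; finally the limit of your key inequality (now legitimate, weak times strong) gives $\langle a-\bar y,\,y-\bar y\rangle\ge\|y-\bar y\|^2$, while the projection inequality gives $\langle a-\bar y,\,y-\bar y\rangle\le 0$, forcing $y=\bar y\in A(x)$. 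These are precisely the arguments missing between your displayed inequalities and the conclusion, and the first half of the lemma ($x\in D(A)$) remains unproved in your approach altogether.
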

\begin{lem}\label{lem2.2}
Let $A_n$ $(n\in \mathbb{N})$ and $A$ be maximal monotone operators
of $E$ such that $dis(A_n, A)\to 0$. Suppose also that $x_n\in
D(A_n)$ with $x_n\to x$ and $y_n\in A_n(x_n)$ with $y_n\to y$ weakly
for some $x, y\in E$. Then $x\in D(A)$ and $y\in A(x)$.
\end{lem}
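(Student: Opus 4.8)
The plan is to verify the hypotheses of Lemma \ref{lem2.1} for the operator $A$, the limit point $x$, and the weak limit $y$, and then invoke it directly. The two things to check are that $x \in \overline{D(A)}$ and that $\langle A^0(z) - y,\, z - x\rangle \geq 0$ for every $z \in D(A)$. First I would record that a weakly convergent sequence is norm bounded, so $M := \sup_{n}\|y_n\| < \infty$.

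For the monotonicity inequality, fix $z \in D(A)$ and apply the definition \eqref{2.1} of the pseudo-distance with the admissible choices $x_n \in D(A_n)$, $y_n \in A_n(x_n)$ on the $A_n$-side and $z \in D(A)$, $A^0(z) \in A(z)$ on the $A$-side. This gives
\[
\langle y_n - A^0(z),\, z - x_n\rangle \leq dis(A_n, A)\,\big(1 + \|y_n\| + \|A^0(z)\|\big).
\]
Letting $n \to \infty$, the right-hand side tends to $0$ since $dis(A_n,A) \to 0$ and the bracket is bounded by $1 + M + \|A^0(z)\|$; on the left, $y_n \to y$ weakly while $z - x_n \to z - x$ strongly, so the pairing converges to $\langle y - A^0(z),\, z - x\rangle$. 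Hence $\langle A^0(z) - y,\, z - x\rangle \geq 0$ for all $z \in D(A)$.

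The delicate point, and the main obstacle, is showing $x \in \overline{D(A)}$, since $x_n$ only lies in the moving domains $D(A_n)$. Here I would exploit the resolvent: fix $\eta > 0$, set $p_n := J_\eta^A(x_n) \in D(A)$ and $q_n := A_\eta(x_n) \in A(p_n)$, so that $p_n - x_n = -\eta q_n$. Applying \eqref{2.1} again with $x_n, y_n$ on the $A_n$-side and $p_n, q_n$ on the $A$-side yields $\eta\|q_n\|^2 - \eta\langle y_n, q_n\rangle \leq dis(A_n,A)\big(1 + \|y_n\| + \|q_n\|\big)$, a quadratic inequality in $\|q_n\|$ whose coefficients are controlled by $M$ and by $dis(A_n,A) \to 0$; solving it gives $\limsup_n \|q_n\| \leq M$. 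Since $d(x_n, D(A)) \leq \|x_n - p_n\| = \eta\|q_n\|$ and $d(\cdot, D(A))$ is $1$-Lipschitz, passing to the limit and using $x_n \to x$ gives $d(x, D(A)) \leq \eta M$. As $\eta > 0$ is arbitrary, $d(x, D(A)) = 0$, that is $x \in \overline{D(A)}$.

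With both conditions verified, Lemma \ref{lem2.1} immediately yields $x \in D(A)$ and $y \in A(x)$, completing the proof. I expect the quadratic estimate controlling $\|q_n\|$ uniformly in $n$ to be the one step requiring genuine care, whereas the passage to the limit in the monotonicity inequality is routine weak--strong convergence.
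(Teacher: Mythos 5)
Your proof is correct, but note first that the paper itself contains no proof of Lemma \ref{lem2.2}: it is one of the preliminary lemmas quoted from Kunze--Monteiro Marques \cite{KM}, so the comparison below is with the standard argument rather than with a proof printed in the paper. Each of your steps checks out: the limit passage in the monotonicity inequality is a routine weak--strong argument (using that weak convergence bounds $\|y_n\|$), the identity $p_n-x_n=-\eta q_n$ with $q_n=A_\eta(x_n)\in A(p_n)$ is the standard resolvent/Yosida relation, and your quadratic inequality $\eta\|q_n\|^2-(\eta M+d_n)\|q_n\|-d_n(1+M)\le 0$ (with $d_n=dis(A_n,A)$) does give $\limsup_n\|q_n\|\le M$, hence $d(x,D(A))\le\eta M$ for every $\eta>0$ and $x\in\overline{D(A)}$, so Lemma \ref{lem2.1} applies.

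Two observations on economy, however. First, your quadratic estimate is essentially a re-derivation of part 1) of Lemma \ref{lem2.3} in the case at hand: since $\|A_n^0(x_n)\|\le\|y_n\|\le M$, applying that lemma with $A=A_n$, $B=A$, $x=x_n$ gives at once $d(x_n,D(A))\le\|x_n-J_\eta^A(x_n)\|\le \eta M+d_n+\sqrt{\eta(1+M)d_n}$, which yields $x\in\overline{D(A)}$ with no new computation. Second, and more substantially, the entire $\overline{D(A)}$ step can be bypassed: the definition \eqref{2.1} allows \emph{arbitrary} pairs $(z,w)\in Gr(A)$ on the $A$-side, not only $(z,A^0(z))$. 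Passing to the limit in $\langle y_n-w,\,z-x_n\rangle\le dis(A_n,A)\big(1+\|y_n\|+\|w\|\big)$ gives $\langle y-w,\,x-z\rangle\ge 0$ for every $(z,w)\in Gr(A)$, i.e.\ $(x,y)$ is monotonically related to the whole graph of $A$, and the maximality of $A$ then yields $(x,y)\in Gr(A)$ directly --- no Lemma \ref{lem2.1} and no resolvent needed. The point of Lemma \ref{lem2.1} is precisely that it weakens the test set to the minimal sections $A^0(z)$ at the price of requiring $x\in\overline{D(A)}$; since the dis inequality hands you the full-graph inequality for free, that trade is unnecessary here, and what you called the main obstacle dissolves.
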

\begin{lem}\label{lem2.3}
Let $A$ and $B$ be maximal monotone operators of $E$. Then\\
1) for $\eta>0$ and $x\in D(A)$
$$ \|x-J_{\eta}^{B}(x)\| \leq \eta\|A^0(x)\|+dis(A,B)+\sqrt{\eta\big(1+\|A^0(x)\|\big)dis(A, B)}.$$
2) For $\eta>0$ and $x, x'\in E$
$$ \|J_{\eta}^{A}(x)-J_{\eta}^{A}(x')\|\leq
\|x-x'\|.$$
\end{lem}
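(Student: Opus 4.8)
The plan is to treat the two statements separately: part 2) is the classical nonexpansiveness of the resolvent, while part 1) follows from a single application of the defining inequality \eqref{2.1} of the pseudo-distance, followed by the elementary solution of a quadratic inequality.

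For part 2), I would set $z = J_\eta^A(x)$ and $z' = J_\eta^A(x')$. By the definition of the resolvent these satisfy $z, z' \in D(A)$ with $\frac{1}{\eta}(x - z) \in A(z)$ and $\frac{1}{\eta}(x' - z') \in A(z')$. Applying monotonicity of $A$ to these two pairs gives $\langle (x-z) - (x'-z'),\, z - z'\rangle \geq 0$, which rearranges to $\|z - z'\|^2 \leq \langle x - x',\, z - z'\rangle$. A single use of the Cauchy--Schwarz inequality then yields $\|z - z'\| \leq \|x - x'\|$, which is the assertion.

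For part 1), I would set $z = J_\eta^B(x)$, so that $z \in D(B)$ and $\frac{1}{\eta}(x - z) \in B(z)$; since $x \in D(A)$ we also have $A^0(x) \in A(x)$. The key step is to apply the defining inequality of $dis(A,B)$ to the admissible quadruple $x,\, A^0(x) \in A(x)$ and $z,\, \frac{1}{\eta}(x-z) \in B(z)$, which gives
$$\Big\langle A^0(x) - \tfrac{1}{\eta}(x - z),\; z - x\Big\rangle \leq dis(A,B)\Big(1 + \|A^0(x)\| + \tfrac{1}{\eta}\|x - z\|\Big).$$
Writing $\rho = \|x - z\|$ and expanding the inner product, the term $-\frac{1}{\eta}\langle x - z,\, z - x\rangle = \frac{\rho^2}{\eta}$ appears, while $\langle A^0(x),\, z - x\rangle$ is bounded below by $-\|A^0(x)\|\,\rho$ via Cauchy--Schwarz. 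After multiplying through by $\eta$ and collecting terms, this produces the quadratic inequality $\rho^2 - \big(dis(A,B) + \eta\|A^0(x)\|\big)\rho - \eta\, dis(A,B)\big(1 + \|A^0(x)\|\big) \leq 0$.

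The remaining step is routine: a quadratic $\rho^2 - \alpha\rho - \beta$ with $\alpha, \beta \geq 0$ is nonpositive only for $\rho$ below its positive root $\frac{1}{2}\big(\alpha + \sqrt{\alpha^2 + 4\beta}\big)$, and since $\sqrt{\alpha^2 + 4\beta} \leq \alpha + 2\sqrt{\beta}$ one gets $\rho \leq \alpha + \sqrt{\beta}$. Substituting $\alpha = dis(A,B) + \eta\|A^0(x)\|$ and $\beta = \eta\, dis(A,B)\big(1 + \|A^0(x)\|\big)$ reproduces exactly the asserted bound. I expect the only delicate point to be the bookkeeping in assembling the quadratic inequality and recognizing that the crude estimate $\sqrt{\alpha^2 + 4\beta} \leq \alpha + 2\sqrt{\beta}$ is precisely what converts the exact root into the clean sum of three terms appearing in the statement.
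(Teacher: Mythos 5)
Your proof is correct. The paper itself gives no proof of this lemma --- it is quoted from Kunze--Monteiro Marques \cite{KM} --- and your argument (nonexpansiveness via monotonicity plus Cauchy--Schwarz for part 2; applying the defining inequality of $dis(A,B)$ to the quadruple $x$, $A^0(x)\in A(x)$, $J_{\eta}^{B}(x)$, $\tfrac{1}{\eta}\big(x-J_{\eta}^{B}(x)\big)\in B\big(J_{\eta}^{B}(x)\big)$ and then solving the resulting quadratic inequality, with the crude bound $\sqrt{\alpha^2+4\beta}\leq \alpha+2\sqrt{\beta}$, for part 1) is precisely the standard one found there.
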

\begin{lem}\label{lem2.4}
Let $A_n$ $(n\in \mathbb{N})$ and $A$ be maximal monotone operators
of $E$ such that $dis(A_n, A)\to 0$ and $\|A^0_n(x)\|\leq
c(1+\|x\|)$ for some $c>0$, all $n\in \mathbb{N}$ and $x\in D(A_n)$.
Then for every $z\in D(A)$ there exists a sequence $(\zeta_n)$ such
that
 \begin{equation}\label{2.2}\zeta_n\in
D(A_n),\;\;\;\zeta_n\to z\;\;\textmd{and}\;\;A_n^0(\zeta_n)\to
A^0(z).\end{equation}
\end{lem}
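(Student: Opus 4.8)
The plan is to realize the required points as resolvents of $A_n$ evaluated at the fixed $z$, and then to pin down their least-norm selections by combining the stability Lemma \ref{lem2.2} with the quantitative estimate of Lemma \ref{lem2.3}. Write $d_n := dis(A_n, A)$; since the pseudo-distance defined in \eqref{2.1} is symmetric, one also has $dis(A, A_n) = d_n \to 0$. I would choose $\eta_n > 0$ with $\eta_n \to 0$ and $d_n/\eta_n \to 0$ (for instance $\eta_n = \max\{\sqrt{d_n},\, 1/n\}$, which forces $d_n/\eta_n \le \sqrt{d_n}\to 0$ and covers painlessly the degenerate case $d_n = 0$), and set $\zeta_n := J_{\eta_n}^{A_n}(z)$, so that $\zeta_n \in D(A_n)$ by construction. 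Applying part 1) of Lemma \ref{lem2.3} with the roles $B = A_n$, $\eta = \eta_n$ and $x = z \in D(A)$ gives
$$\|z - \zeta_n\| \le \eta_n \|A^0(z)\| + d_n + \sqrt{\eta_n\big(1 + \|A^0(z)\|\big) d_n},$$
whose right-hand side tends to $0$; hence $\zeta_n \to z$ strongly. In particular $(\zeta_n)$ is bounded, so by $(H_2)$-type bound $\|A_n^0(\zeta_n)\| \le c(1+\|\zeta_n\|)$ the sequence $\big(A_n^0(\zeta_n)\big)$ is bounded in $E$.

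The decisive quantitative step is to control the size of $A_n^0(\zeta_n)$ from above. By definition of the Yosida approximation, $\tfrac{1}{\eta_n}(z - \zeta_n) = (A_n)_{\eta_n}(z) \in A_n(\zeta_n)$, and since $A_n^0(\zeta_n)$ is the minimal-norm element of $A_n(\zeta_n)$, the previous estimate yields
$$\|A_n^0(\zeta_n)\| \le \frac{1}{\eta_n}\|z - \zeta_n\| \le \|A^0(z)\| + \frac{d_n}{\eta_n} + \sqrt{\frac{\big(1+\|A^0(z)\|\big) d_n}{\eta_n}}.$$
Because $d_n/\eta_n \to 0$ by the calibrated choice of $\eta_n$, this gives $\limsup_n \|A_n^0(\zeta_n)\| \le \|A^0(z)\|$.

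It remains to identify the limit and to upgrade to strong convergence. The sequence $\big(A_n^0(\zeta_n)\big)$ being bounded, any subsequence admits a weakly convergent further subsequence, say $A_n^0(\zeta_n) \rightharpoonup w$; since $\zeta_n \to z$, $A_n^0(\zeta_n) \in A_n(\zeta_n)$ and $dis(A_n, A)\to 0$, Lemma \ref{lem2.2} gives $w \in A(z)$. By minimality of $A^0(z)$ and weak lower semicontinuity of the norm,
$$\|A^0(z)\| \le \|w\| \le \liminf_n \|A_n^0(\zeta_n)\| \le \limsup_n \|A_n^0(\zeta_n)\| \le \|A^0(z)\|,$$
so all these quantities agree; uniqueness of the least-norm element of the closed convex set $A(z)$ then forces $w = A^0(z)$. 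Thus every weakly convergent subsequence has the same limit $A^0(z)$, whence the full bounded sequence satisfies $A_n^0(\zeta_n) \rightharpoonup A^0(z)$, with $\|A_n^0(\zeta_n)\| \to \|A^0(z)\|$; in a Hilbert space, weak convergence together with convergence of norms implies strong convergence, so $A_n^0(\zeta_n) \to A^0(z)$, which establishes \eqref{2.2}. The main obstacle is precisely the calibration of $\eta_n$ against $d_n$: the upper estimate is informative only when $d_n/\eta_n \to 0$ while still $\eta_n \to 0$, and obtaining the sharp limit $A^0(z)$ (rather than merely some element of $A(z)$) rests on squeezing $\limsup_n \|A_n^0(\zeta_n)\|$ below $\|A^0(z)\|$ so that the minimality and uniqueness argument closes.
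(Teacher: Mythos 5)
Your proof is correct. The paper itself gives no proof of Lemma \ref{lem2.4} --- it is quoted from Kunze--Monteiro Marques \cite{KM} --- and your construction $\zeta_n=J_{\eta_n}^{A_n}(z)$ with $\eta_n$ calibrated so that $\eta_n\to 0$ and $dis(A_n,A)/\eta_n\to 0$ is essentially the argument of that reference: the symmetry of $dis(\cdot,\cdot)$, the estimate of Lemma \ref{lem2.3}, the Yosida bound $\|A_n^0(\zeta_n)\|\leq \eta_n^{-1}\|z-\zeta_n\|$, the identification of weak cluster points via Lemma \ref{lem2.2}, and the weak-plus-norm-convergence upgrade to strong convergence in Hilbert space all check out (and, as you implicitly noticed, your quantitative bound even makes the linear-growth hypothesis on $A_n^0$ superfluous).
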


We finish this section by some  types of Gronwall's lemma, which are
crucial for our purpose.
\begin{lem}\label{lem2.5}
Let $(\alpha_i)$, $(\beta_i)$, $(\gamma_i)$ and $(a_i)$ be sequences
of nonnegative real numbers such that $a_{i+1}\leq
\alpha_i+\beta_i\big(a_0+a_1+....+a_{i-1}\big)+(1+\gamma_i)a_i$ for
$i\in \mathbb{N}$. Then
$$ a_j\leq \bigg(a_0+\sum_{k=0}^{j-1}
\alpha_k\bigg)\exp\bigg(\sum_{k=0}^{j-1}
\big(k\beta_k+\gamma_k\big)\bigg)\;\;\textmd{for}\;j\in
\mathbb{N}^*.$$
\end{lem}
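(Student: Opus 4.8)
The plan is to prove the bound $a_j\le B_j\exp(\Gamma_j)$ for every $j\in\mathbb{N}$, where I abbreviate $B_j:=a_0+\sum_{k=0}^{j-1}\alpha_k$ and $\Gamma_j:=\sum_{k=0}^{j-1}\big(k\beta_k+\gamma_k\big)$, with the convention that empty sums vanish (so $B_0=a_0$ and $\Gamma_0=0$). Since all the $\alpha_k,\beta_k,\gamma_k$ are nonnegative, both $B_j$ and $\Gamma_j$ are nonnegative and nondecreasing in $j$; in particular $\exp(\Gamma_j)\ge 1$, and $B_k\exp(\Gamma_k)\le B_j\exp(\Gamma_j)$ whenever $k\le j$. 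I would argue by strong induction on $j$, the case $j=0$ being the trivial identity $a_0\le a_0=B_0\exp(\Gamma_0)$.

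For the inductive step, suppose $a_k\le B_k\exp(\Gamma_k)$ holds for all $k\le i$. The crucial observation is that the term $a_0+a_1+\cdots+a_{i-1}$ is a sum of $i$ summands, each of which is at most $B_{i-1}\exp(\Gamma_{i-1})\le B_i\exp(\Gamma_i)$ by the induction hypothesis and monotonicity; hence this sum is bounded by $i\,B_i\exp(\Gamma_i)$. This is exactly where the factor $k$ in the product $k\beta_k$ originates. Substituting this estimate together with $a_i\le B_i\exp(\Gamma_i)$ into the recurrence gives
$$a_{i+1}\le \alpha_i+\big(1+i\beta_i+\gamma_i\big)B_i\exp(\Gamma_i).$$

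It then remains to compare the right-hand side with $B_{i+1}\exp(\Gamma_{i+1})=(B_i+\alpha_i)\exp(\Gamma_i)\exp(i\beta_i+\gamma_i)$. Applying the elementary inequality $\exp(x)\ge 1+x$ with $x=i\beta_i+\gamma_i\ge 0$ yields $B_{i+1}\exp(\Gamma_{i+1})\ge (B_i+\alpha_i)\big(1+i\beta_i+\gamma_i\big)\exp(\Gamma_i)$. Expanding this product, the term $\big(1+i\beta_i+\gamma_i\big)B_i\exp(\Gamma_i)$ matches the corresponding term in the bound on $a_{i+1}$, so the desired inequality reduces to $\alpha_i\le \alpha_i\big(1+i\beta_i+\gamma_i\big)\exp(\Gamma_i)$, which holds because $\exp(\Gamma_i)\ge 1$, $1+i\beta_i+\gamma_i\ge 1$ and $\alpha_i\ge 0$. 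This closes the induction and establishes the claim for all $j\in\mathbb{N}^*$.

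I do not expect a serious obstacle here. The only points requiring care are the bookkeeping with the shifted index in $a_0+\cdots+a_{i-1}$ (which has $i$, not $i+1$, summands), so that the factor $i$ is correctly paired with $\beta_i$, and the systematic use of nonnegativity: it is precisely the nonnegativity of $\alpha_i,\beta_i,\gamma_i$ together with the monotonicity of $B_j$ and $\Gamma_j$ that allow the crude bound $\exp(x)\ge 1+x$ to suffice and the leftover $\alpha_i$ to be absorbed in the final comparison.
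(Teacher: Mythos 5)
Your proof is correct. Note that the paper itself gives no proof of Lemma \ref{lem2.5}: it is one of the auxiliary results imported from Kunze--Monteiro Marques \cite{KM}, stated without argument, so there is no in-paper proof to compare against. Your strong induction supplies exactly the standard verification: the base case $j=0$ holds with equality, the sum $a_0+\cdots+a_{i-1}$ of $i$ terms is correctly majorized by $i\,B_i\exp(\Gamma_i)$ via monotonicity of $B_j\exp(\Gamma_j)$ (this is indeed where the factor $k$ in $k\beta_k$ comes from), and the comparison with $B_{i+1}\exp(\Gamma_{i+1})$ via $e^x\ge 1+x$ closes cleanly, the leftover term $\alpha_i\le\alpha_i(1+i\beta_i+\gamma_i)\exp(\Gamma_i)$ being absorbed by nonnegativity. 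The edge case $i=0$ (empty sum, factor $0\cdot\beta_0$) is also handled consistently by your conventions, so the argument is complete as written.
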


\begin{lem}\label{lem2.6}  Let $\mu$ be a positive Radon measure on $I$. Let $g \in L^1(I, \mathbb{R}; \mu)$ be a nonnegative function
 and $\beta  \geq 0$ be such that,
$\forall t \in I$, $0\leq  \mu ( \{t\} ) g(t)  \leq \beta < 1$. Let
$\varphi \in  L^\infty(I, \mathbb{R}; \mu)$ be a nonnegative
function satisfying
\begin{equation*}\varphi(t)  \leq \alpha +\int_{]0, t]}  g(s) \varphi(s)
\mu(ds)\;\;\;  \forall t \in I, \end{equation*} where $\alpha$ is a
nonnegative constant. Then
\begin{equation*}\varphi(t) \leq \alpha \exp \Big( \frac{1} {1-\beta} \int_{]0, t]} g(s) \mu(ds)\Big)\;\;\;    \forall t \in I. \end{equation*}
\end{lem}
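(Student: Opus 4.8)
The plan is to control $\varphi$ through its nondecreasing, right continuous majorant
\[
\psi(t):=\alpha+\int_{]0,t]}g(s)\varphi(s)\,\mu(ds),
\]
for which $\varphi(t)\le\psi(t)$ on $I$, $\psi(0)=\alpha$, and $\psi$ is bounded on $I$ since $\varphi\in L^\infty(I,\mathbb{R};\mu)$ and $g\in L^1(I,\mathbb{R};\mu)$ give $\psi(t)\le\alpha+\|\varphi\|_\infty\,\|g\|_1$. Writing $h(t):=\int_{]0,t]}g\,d\mu$, the target becomes $\psi(t)\le\alpha\exp\!\big(h(t)/(1-\beta)\big)$. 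The degenerate case $\alpha=0$ will follow from the case $\alpha>0$ by replacing $\alpha$ with $\varepsilon>0$, letting $\varepsilon\to0^{+}$, and using $\varphi\ge0$; so I assume $\alpha>0$ in what follows.

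First I would exhibit an explicit \emph{supersolution}. Set $R(t):=\alpha\exp\!\big(h(t)/(1-\beta)\big)$, which is nondecreasing, right continuous, bounded on $I$ (as $h(T)<\infty$), and satisfies $R\ge\alpha>0$. The claim is that the function $G(t):=R(t)-\alpha-\int_{]0,t]}gR\,d\mu$ has $G(0)=0$ and a nonnegative differential measure, so that $R(t)\ge\alpha+\int_{]0,t]}gR\,d\mu$ for all $t$. To check $dG\ge0$ I would split $\mu=\mu_{c}+\sum_{j}\mu(\{t_{j}\})\,\delta_{t_{j}}$ into its atomless part $\mu_c$ and its atomic part. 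On the continuous part the Stieltjes chain rule gives $dR=\tfrac{1}{1-\beta}\,gR\,d\mu_{c}\ge gR\,d\mu_{c}$ because $1-\beta\le1$, so $dG$ is nonnegative there. At an atom $t_{j}$, writing $g_{j}=g(t_{j})$ and $m_{j}=\mu(\{t_{j}\})$, the jump condition $\Delta R(t_j)\ge g_j m_j R(t_j)$ reduces, after dividing by $R(t_j^-)>0$, to
\[
\exp\!\Big(\frac{g_{j}m_{j}}{1-\beta}\Big)\,(1-g_{j}m_{j})\ge 1 .
\]
This is exactly where the standing hypothesis $g_{j}m_{j}=\mu(\{t_{j}\})\,g(t_{j})\le\beta<1$ enters: it yields $g_{j}m_{j}/(1-\beta)\ge g_{j}m_{j}/(1-g_{j}m_{j})$, whence by $e^{u}\ge1+u$ the left-hand side is at least $(1-g_{j}m_{j})^{-1}(1-g_{j}m_{j})=1$. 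Collecting the continuous and atomic contributions gives $dG\ge0$, hence the supersolution inequality.

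Finally I would compare the subsolution $\psi$ with $R$ by a normalized supremum argument. Put $\theta:=\sup_{t\in I}\psi(t)/R(t)$, finite since $\psi$ is bounded and $R\ge\alpha>0$. Using $\psi(s)\le\theta R(s)$ together with the supersolution bound,
\[
\psi(t)\le\alpha+\theta\int_{]0,t]}gR\,d\mu\le\alpha+\theta\big(R(t)-\alpha\big)=\alpha(1-\theta)+\theta R(t),
\]
so $\psi(t)/R(t)\le\theta+\alpha(1-\theta)/R(t)$. If $\theta>1$, then $\alpha(1-\theta)/R(t)$ is strictly negative and, because $R$ is bounded above on $I$, it is bounded by a negative constant; taking the supremum over $t$ would force $\theta\le\theta+(\text{negative})<\theta$, a contradiction. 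Hence $\theta\le1$, i.e. $\psi\le R$, and therefore $\varphi(t)\le\psi(t)\le R(t)=\alpha\exp\!\big(h(t)/(1-\beta)\big)$, which is the assertion.

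I expect the genuine obstacle to be the rigorous verification of the supersolution inequality in the presence of atoms, where one must treat simultaneously the continuous growth (a true Stieltjes chain rule for $\exp(h/(1-\beta))$) and the multiplicative jumps; the latter is precisely the mechanism producing the sharp factor $1/(1-\beta)$ and is where the condition $\mu(\{t\})g(t)\le\beta<1$ is indispensable. The remaining ingredients — boundedness of $\psi$, the reduction from $\alpha=0$, and the comparison step — are routine.
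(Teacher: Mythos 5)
Your proof is correct. There is, however, nothing in the paper to compare it against: the authors do not prove Lemma~\ref{lem2.6} at all, they attribute it to M.~D.~P. Monteiro Marques and refer to \cite{BCG}, Lemma 2.1. So your argument stands as a self-contained alternative, and its two key mechanisms are sound. First, the supersolution property of $R(t)=\alpha\exp\big(h(t)/(1-\beta)\big)$: splitting $\mu$ into its atomless part $\mu_c$ and its at most countably many atoms, the non-atomic part of $dR-gR\,d\mu$ is $\big(\tfrac{1}{1-\beta}-1\big)gR\,d\mu_c\ge 0$, and at an atom $t_j$ (with $m_j=\mu(\{t_j\})$, $g_j=g(t_j)$) the required inequality $R(t_j)(1-g_jm_j)\ge R(t_j^-)$ is exactly your elementary inequality $e^{u/(1-\beta)}(1-u)\ge 1$ for $0\le u\le\beta<1$, which you verify correctly via $u/(1-\beta)\ge u/(1-u)$ and $e^x\ge 1+x$; this is indeed the only place where the standing hypothesis $\mu(\{t\})g(t)\le\beta<1$ enters, and it is what produces the constant $1/(1-\beta)$. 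Second, the comparison step via $\theta=\sup_{t\in I}\psi(t)/R(t)$ is a clean device: it needs only that $\psi$ is bounded and that $R$ is bounded above and below away from $0$ (here $\alpha>0$ is used, with the case $\alpha=0$ correctly recovered by letting $\varepsilon\downarrow 0$), and the contradiction for $\theta>1$ is airtight; it also avoids any ``first time of violation'' argument, which would be delicate for merely right continuous functions. The one ingredient you invoke without proof is the change-of-variables formula for $R=F(h)$ with $F$ smooth and $h$ nondecreasing right continuous (non-atomic part $F'(h)\,dh_c$, atoms $F(h(t_j))-F(h(t_j^-))$); this is a standard fact of Stieltjes calculus --- a scalar special case of the chain rule for BV functions in the spirit of Moreau--Valadier \cite{MV}, which the paper already cites --- so relying on it is acceptable, though a fully self-contained write-up should state it as a separate lemma with a reference.
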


\proof  This lemma is due to M.M. Marques. For a proof, see e.g
(\cite{BCG}, Lemma 2.1). \finproof

\begin{lem} \label{lem2.7}  Let $\mu$ be a non-atomic positive Radon measure on the interval $I$. Let $c$, $p$ be nonnegative real functions
such that $c \in L^1(I, \mathbb{R};\mu), p \in L^\infty (I,
\mathbb{R};\mu)$, and let $\alpha\geq 0$. Assume that for
$\mu-a.e.\, \, t\in I$
$$p(t) \leq \alpha +\int_{0}^{t} c(s) p(s) \mu(ds).$$
Then, for
$\mu-a.e.\, \, t\in I$
$$p(t) \leq \alpha \exp\Big(\int _{0}^{t} c(s)
\mu(ds)\Big).$$
\end{lem}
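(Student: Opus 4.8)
The plan is to prove Lemma~\ref{lem2.7} by viewing it as a consequence of the previously established Lemma~\ref{lem2.6}, exploiting the crucial hypothesis that the measure $\mu$ is \emph{non-atomic}. The point is that for a non-atomic measure we have $\mu(\{t\})=0$ for every $t\in I$, so in the notation of Lemma~\ref{lem2.6} we may take $\beta=0$, which means the condition $0\leq\mu(\{t\})g(t)\leq\beta<1$ is satisfied trivially with $\beta=0$ by the function $g=c$. The conclusion of Lemma~\ref{lem2.6} then reads $\varphi(t)\leq\alpha\exp\bigl(\int_{]0,t]}c(s)\,\mu(ds)\bigr)$, and since $\mu$ is non-atomic the integrals over $]0,t]$ and over $[0,t]$ (written $\int_0^t$) coincide, giving exactly the desired bound. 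So the main work is to bridge the minor discrepancies between the two statements: the integration domain ($]0,t]$ versus $\int_0^t$) and the almost-everywhere versus everywhere nature of the hypothesis and conclusion.

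First I would set $g:=c$ and $\varphi:=p$, and observe that the integrability assumptions match: $c\in L^1(I,\mathbb R;\mu)$ is the nonnegative $g\in L^1$, and $p\in L^\infty(I,\mathbb R;\mu)$ is the nonnegative $\varphi\in L^\infty$. Next I would record that non-atomicity gives $\mu(\{t\})=0$ for all $t$, hence $\mu(\{t\})g(t)=0\leq\beta$ with the admissible choice $\beta=0<1$, so the structural hypothesis of Lemma~\ref{lem2.6} holds. I would then note that, again by non-atomicity, $\int_{]0,t]}h\,d\mu=\int_{[0,t]}h\,d\mu=\int_0^t h\,d\mu$ for any integrable $h$, because the singletons $\{0\}$ and $\{t\}$ carry no mass; this identifies the integral appearing in the present hypothesis $p(t)\leq\alpha+\int_0^t c(s)p(s)\,\mu(ds)$ with the integral $\int_{]0,t]}g(s)\varphi(s)\,\mu(ds)$ of Lemma~\ref{lem2.6}.

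The only genuinely delicate point is the transition from the ``$\mu$-a.e.\ $t$'' form of the present hypothesis to the ``$\forall t$'' form required by Lemma~\ref{lem2.6}, and symmetrically recovering an a.e.\ conclusion. To handle this I would let $N$ be the $\mu$-null set off which the assumed inequality holds, define $\tilde\varphi(t):=\alpha+\int_0^t c(s)p(s)\,\mu(ds)$, and observe that $\tilde\varphi$ satisfies the integral inequality for \emph{every} $t$: indeed $\tilde\varphi(t)=\alpha+\int_{]0,t]}c(s)p(s)\,\mu(ds)$, and since $p(s)\leq\tilde\varphi(s)$ holds for $\mu$-a.e.\ $s$ (hence the exceptional set $N$ does not affect the integral), we get $\tilde\varphi(t)\leq\alpha+\int_{]0,t]}c(s)\tilde\varphi(s)\,\mu(ds)$ for all $t\in I$. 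Applying Lemma~\ref{lem2.6} to $\tilde\varphi$ with $\beta=0$ yields $\tilde\varphi(t)\leq\alpha\exp\bigl(\int_{]0,t]}c(s)\,\mu(ds)\bigr)$ for all $t$, and then $p(t)\leq\tilde\varphi(t)$ for $\mu$-a.e.\ $t$ gives the claimed estimate $p(t)\leq\alpha\exp\bigl(\int_0^t c(s)\,\mu(ds)\bigr)$ for $\mu$-a.e.\ $t$.

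I expect this auxiliary-majorant argument to be the main obstacle: one must be careful that replacing $p$ by its majorant $\tilde\varphi$ inside the integral is legitimate, which is exactly where non-atomicity (ensuring null sets are harmless and that $\{t\}$ contributes nothing) is used, and where the a.e.-to-everywhere upgrade is effected. Once $\tilde\varphi$ is seen to satisfy the everywhere hypothesis of Lemma~\ref{lem2.6}, the remainder is a direct invocation of that lemma together with the elementary identification of integration domains, and no further estimation is needed.
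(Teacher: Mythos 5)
Your proposal is correct, and it takes a genuinely different route from the paper. The paper does not prove Lemma \ref{lem2.7} internally at all: it defers to the references (\cite{ACM}, Lemma 2.7, and \cite{M-SAC}, Lemma 4 with $\eta=0$) and stresses that the proof there is \emph{not} a consequence of a Gronwall-type iteration but rests on the Moreau--Valadier differentiation theorem for vector functions of bounded variation \cite{MV}. You instead obtain the lemma as a corollary of Lemma \ref{lem2.6}: non-atomicity of $\mu$ lets you take $\beta=0$ (so $\mu(\{t\})c(t)=0<1$ holds trivially and the constant in the exponential is the sharp $1$ rather than $1/(1-\beta)$), it identifies $\int_0^t$ with $\int_{]0,t]}$, and your auxiliary majorant $\tilde\varphi(t)=\alpha+\int_{]0,t]}c(s)p(s)\,\mu(ds)$ correctly bridges the a.e.-versus-everywhere mismatch: $\tilde\varphi$ is nonnegative, nondecreasing (hence measurable), bounded by $\alpha+\|p\|_\infty\|c\|_{L^1}$, and satisfies the everywhere inequality $\tilde\varphi(t)\leq\alpha+\int_{]0,t]}c(s)\tilde\varphi(s)\,\mu(ds)$ because null sets do not affect integrals and $c\geq 0$. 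Applying Lemma \ref{lem2.6} to $\tilde\varphi$ and then using $p\leq\tilde\varphi$ $\mu$-a.e.\ closes the argument. What your route buys is self-containedness and elementarity: within this paper, Lemma \ref{lem2.7} becomes a two-line consequence of an already-quoted result, with no appeal to the deep differentiation theory; note this does not contradict the paper's remark, which only denies deducibility from the \emph{classical} (Lebesgue-measure) Gronwall lemma, not from Lemma \ref{lem2.6}. What it does not buy is independence: the analytic burden is simply shifted into Lemma \ref{lem2.6}, which the paper also quotes without proof, whereas the references' BV-differentiation argument is the one that generalizes to settings (e.g.\ the $\eta>0$ case in \cite{M-SAC}) where no such pre-packaged integral inequality is available.
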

The proof (see \cite{ACM}, Lemma 2.7; or \cite{M-SAC}, Lemma 4,
taking $\eta =0$) is not a consequence of the classical Gronwall
lemma dealing with Lebesgue measure $\lambda$ on $I$. It relies on a
deep result of Moreau-Valadier on the derivation of (vector)
functions of bounded variation \cite {MV}.

\begin{lem} \label{lem2.8} (Proposition 4.1 in \cite{TOL}) Let $m\in L^1(I, \mathbb{R};
\lambda)$ be a nonnegative function, and let $x:I\longrightarrow [0,
+\infty[$ be a right continuous function of bounded variation. If
$$ \frac{1}{2} x^2(t)\leq \frac{1}{2} a^2+\int_{]0, t]} m(s)
x(s)ds,\;\;\;t\in I,\;\;\;a\geq 0,$$ then
$$  x(t)\leq  a+\int_{]0, t]} m(s)
ds,\;\;\;t\in I.$$
\end{lem}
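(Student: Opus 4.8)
The plan is to exploit a structural feature that is easy to overlook: although $x$ is only BV and right continuous, the integrator in the hypothesis is the Lebesgue measure $ds$, so the right-hand side
$$ y(t) := \tfrac12 a^2 + \int_{]0,t]} m(s)\, x(s)\, ds $$
is an \emph{absolutely continuous} function of $t$. Indeed, $x$ is bounded on $I$ (being of bounded variation on a compact interval) and measurable, hence $m\,x \in L^1(I,\mathbb{R};\lambda)$, so $y$ is absolutely continuous with $y(0)=\tfrac12 a^2$ and $y'(t)=m(t)\,x(t)$ for $\lambda$-a.e. $t$. The hypothesis reads $\tfrac12 x^2(t)\le y(t)$ for every $t\in I$, and since $x\ge 0$ this gives the pointwise bound $x(t)\le \sqrt{2\,y(t)}$.

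The next step is a regularization to avoid the failure of Lipschitz continuity of the square root at the origin. For $\varepsilon>0$ I would set $z_\varepsilon(t):=\sqrt{2\,y(t)+\varepsilon}$. Since $2y+\varepsilon\ge \varepsilon>0$ and $\sqrt{\cdot}$ is Lipschitz on $[\varepsilon,\infty)$, the function $z_\varepsilon$ is again absolutely continuous, and for $\lambda$-a.e.\ $t$ the chain rule yields
$$ z_\varepsilon'(t)=\frac{y'(t)}{\sqrt{2\,y(t)+\varepsilon}}=\frac{m(t)\,x(t)}{\sqrt{2\,y(t)+\varepsilon}}. $$
Using $x(t)\le \sqrt{2\,y(t)}\le \sqrt{2\,y(t)+\varepsilon}=z_\varepsilon(t)$ together with $m(t)\ge 0$, this bounds the derivative by $z_\varepsilon'(t)\le m(t)$ a.e.

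Finally I would integrate this differential inequality. Because $z_\varepsilon$ is absolutely continuous,
$$ z_\varepsilon(t)=z_\varepsilon(0)+\int_{]0,t]} z_\varepsilon'(s)\,ds\le \sqrt{a^2+\varepsilon}+\int_{]0,t]} m(s)\,ds \qquad \forall\, t\in I, $$
where $z_\varepsilon(0)=\sqrt{2\,y(0)+\varepsilon}=\sqrt{a^2+\varepsilon}$. Combining with $x(t)\le \sqrt{2\,y(t)}\le z_\varepsilon(t)$ gives $x(t)\le \sqrt{a^2+\varepsilon}+\int_{]0,t]} m(s)\,ds$ for all $t\in I$, and letting $\varepsilon\downarrow 0$ produces the claimed estimate $x(t)\le a+\int_{]0,t]} m(s)\,ds$. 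I expect the only genuine subtlety to be the treatment of the square root near its zeros, which is precisely what the parameter $\varepsilon$ resolves; once that is in place the argument is a clean application of the fundamental theorem of calculus for absolutely continuous functions, and the right continuity and bounded variation of $x$ enter only through the boundedness needed to make $y$ well defined. One should, however, verify that no hidden use of the jumps of $x$ is required, i.e.\ that the pointwise inequality $\tfrac12 x^2(t)\le y(t)$ holding for \emph{every} $t$ (not merely a.e.) is what transfers the conclusion to every $t\in I$.
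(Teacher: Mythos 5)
Your proof is correct. Note that the paper itself offers no argument for this lemma: it is stated with the attribution ``Proposition 4.1 in \cite{TOL}'' and the proof is outsourced to Tolstonogov's paper, so any complete proof you give is necessarily a different route from the paper's. Your route is sound at every step: since the integrator in the hypothesis is the Lebesgue measure, the majorant $y(t)=\tfrac12 a^2+\int_{]0,t]}m(s)x(s)\,ds$ is absolutely continuous and nondecreasing no matter how badly $x$ jumps (bounded variation and right continuity of $x$ enter only through boundedness and measurability, which make $mx\in L^1$); the regularization $z_\varepsilon=\sqrt{2y+\varepsilon}$ is absolutely continuous because $\sqrt{\cdot}$ is $C^1$ on a neighbourhood of the compact range of $2y+\varepsilon\geq\varepsilon$, so the a.e.\ chain rule and the fundamental theorem of calculus apply as you use them; and the pointwise (everywhere, not merely a.e.) hypothesis $\tfrac12 x^2(t)\le y(t)$ combined with the continuity of $z_\varepsilon$ is exactly what propagates the final estimate to every $t\in I$, so your closing caveat is already resolved by your own argument. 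Two remarks on what the comparison buys. First, your proof shows the lemma is true under weaker hypotheses than stated: any nonnegative, bounded, measurable $x$ works, so the BVRC structure emphasized in the statement (and natural in Tolstonogov's sweeping-process context) is not actually needed -- this is the classical Bihari/Ou~Yang square-root argument, with the $\varepsilon$-shift handling the sole genuine difficulty, namely the failure of Lipschitz continuity of $\sqrt{\cdot}$ at $0$ when $a=0$. Second, a self-contained one-paragraph proof such as yours would arguably serve the paper better than the bare citation, since the lemma is invoked at two essential points (Theorem 3.5 and Theorem 3.6) to pass from the quadratic estimate $\tfrac12\|u_h(t)-u_o(t)\|^2\le\int_0^t\dot\alpha(s)\|u_h(s)-u_o(s)\|\,ds$ to the linear one.
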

\section{Main results: Existence and uniqueness of BVRC solutions}

We recall, unless stated, that in all the paper,   $E$ is a
separable real Hilbert space, $\lambda$ is the Lebesgue measure on
$I$, $dr$ is the
 Stieljes  measure associated with $r$,   $\nu := \lambda +dr$  and $\frac{d\lambda}{d\nu}$
 is the density of $\lambda$ with respect to the measure $\nu$.

In this section we are interested by the existence of bounded
variation right continuous (shortly  BVRC)  solutions   to the
inclusion  \eqref{1.1}. For the sake of completeness let us   state
and summarize  some useful facts. We refer to  \cite{BCG, Por} for
the proof.
\begin{thm}\label{Theorem 3.1}  Let $X$ be a separable Banach space,    $(I, {\mathcal T}_{\mu}, \mu)$
be a measure space, where   $\mu$ is a positive Radon measure, and
let    $\Gamma:I\rightrightarrows X$ be a convex weakly compact
valued multi-mapping, which is scalarly ${\mathcal
T}_{\mu}$-measurable and such that $\Gamma(t) \subset m(t)
{\overline B}_X$ for some nonnegative function $m\in L^1(I,
\mathbb{R}; \mu)$.  Let $S^1_\Gamma$ be the set of all $L^1(I, X;
\mu)$-selections of $\Gamma$, i.e,
$$ S^1_\Gamma=\Big\{\phi\in L^1(I, X;
\mu):\;\phi(t)\in \Gamma(t)\;\forall t\in I\Big\}.$$ Then the
following properties hold.
\\
(i)  $S^1_\Gamma$ is convex weakly compact in  $L^1(I, X; \mu)$.
\\
(ii)  Let $x_0\in X$.  For each $h \in  S^1_\Gamma$, the mapping $t
\in I\mapsto u_h (t):= x_0 + \int_{]0, t]} h(s) d\mu(s)$ is BVRC
with $\frac{du_h} {d\mu} = h$ $\mu$-a.e.,  and the set $\{ u_h :\; h
\in S^1_\Gamma \}$ is equi-right continuous with bounded variation,
i.e., for  all $h \in S^1_\Gamma$
$$\|u_h(t) -u_h(\tau) \| \leq \int_{]\tau, t]} m (s)
d\mu(s)\;\;\;\textmd{for all}\;\;0\leq \tau \leq t \leq T.$$
 (iii) Let $(h_n)$ be a sequence in  $S^1_\Gamma$, then by extracting
 a
subsequence,  that we do not relabel, $(h_n)$ converges weakly to
some mapping $h\in S^1_\Gamma$, so that $(u_{h_n})$ pointwise
converges weakly to  the BVRC mapping $u_h$, with for all $t\in I$,
$ u_h (t) = x_0 + \int_{]0, t] } h(s)  d\mu(s)$ and  $\frac{du_h}
{d\mu} = h$ $\mu$-a.e.
\\
(iv) Assume further that for each $ t  \in I$,   $\{ u_{h_n} (t) :\;
n\in \mathbb{N}\}$ is relatively compact, then  $(u_{h_n})$
pointwise converges strongly to $u_h$.
\\
(v) Assume that $\Gamma(t)$ is convex compact for each $ t  \in I$,
then   the set $\{ u_{h_n} (t) : \;n\in \mathbb{N}\}$ is relatively
compact and $(u_{h_n})$ pointwise converges strongly to $u_h$.
\end{thm}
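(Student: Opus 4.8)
The plan is to treat the five assertions separately, building from the weak compactness in (i), which is the engine for the rest. For (i) I would first note that convexity of $S^1_\Gamma$ is immediate from convexity of the values $\Gamma(t)$, and that $S^1_\Gamma$ is nonempty by a measurable selection theorem for scalarly measurable multifunctions with convex weakly compact values in a separable Banach space, the domination $\Gamma(t)\subset m(t)\overline{B}_X$ with $m\in L^1$ making every measurable selection integrable. The two ingredients for weak compactness are: (a) uniform integrability of $\{\|h(\cdot)\|:h\in S^1_\Gamma\}$, immediate from $\|h(s)\|\le m(s)$ with $m\in L^1(I,\mathbb R;\mu)$; and (b) for $\mu$-a.e.\ $s$, relative weak compactness of the values $\{h(s):h\in S^1_\Gamma\}\subset\Gamma(s)$, which holds since $\Gamma(s)$ is weakly compact. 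By the Dunford--Pettis type criterion for weak compactness in $L^1(I,X;\mu)$ these give relative weak compactness. Finally $S^1_\Gamma$ is strongly closed (an $L^1$-convergent sequence has an a.e.-convergent subsequence whose limit lands in the closed set $\Gamma(s)$ for a.e.\ $s$), hence weakly closed by convexity and Mazur's theorem; relatively weakly compact plus weakly closed yields (i).

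For (ii) I would argue by direct computation. Right continuity of $t\mapsto u_h(t)=x_0+\int_{]0,t]}h\,d\mu$ follows from dominated convergence: as $t_k\downarrow t$ the sets $]0,t_k]$ decrease to $]0,t]$ and $\|h\|\le m\in L^1$ dominates. Bounded variation and the stated estimate both come from $\|u_h(t)-u_h(\tau)\|=\|\int_{]\tau,t]}h\,d\mu\|\le\int_{]\tau,t]}\|h\|\,d\mu\le\int_{]\tau,t]}m\,d\mu$, which is uniform in $h$ and so also yields the equi-right continuity of the family. The identity $\frac{du_h}{d\mu}=h$ $\mu$-a.e.\ is the Radon--Nikodym statement for the vector measure $A\mapsto\int_A h\,d\mu$ relative to $\mu$.

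For (iii) I would apply (i) to extract a subsequence $h_n\rightharpoonup h$ weakly in $L^1$ with $h\in S^1_\Gamma$, then test pointwise convergence against the dual: for fixed $t$ and $x'\in X'$, writing $\langle x',u_{h_n}(t)\rangle=\langle x',x_0\rangle+\int_{]0,t]}\langle x',h_n(s)\rangle\,d\mu(s)$ and pairing $h_n$ with the fixed element $\mathbf 1_{]0,t]}x'$ of the dual of $L^1(I,X;\mu)$, weak convergence gives $\langle x',u_{h_n}(t)\rangle\to\langle x',u_h(t)\rangle$, i.e.\ $u_{h_n}(t)\rightharpoonup u_h(t)$. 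Part (iv) is then soft: a weakly convergent sequence lying in a relatively norm compact set converges strongly, since every subsequence has a norm-convergent further subsequence whose limit must coincide with the weak limit $u_h(t)$.

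The real work is in (v), where I must produce the relative compactness hypothesis of (iv) from the assumption that the values $\Gamma(t)$ are norm compact. The plan is to observe that $\{u_{h_n}(t):n\in\mathbb N\}\subset x_0+\int_{]0,t]}\Gamma\,d\mu$, the translate of the Aumann integral, and to show that this Aumann integral of an integrably bounded, norm-compact-valued measurable multifunction is relatively norm compact. I expect this to be the main obstacle: the argument requires approximating $\Gamma$ by step multifunctions on a finite measurable partition, so that the associated integral sets are finite Minkowski sums of compact sets and hence compact, while controlling the error uniformly in the selection by $\int_{A}m\,d\mu$ over sets $A$ of small measure via the integrability of $m$; some care is needed because $\mu$ may carry atoms. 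Once relative compactness of $\{u_{h_n}(t)\}$ is in hand, (iv) upgrades the weak convergence of (iii) to strong convergence, completing (v).
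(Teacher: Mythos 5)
Your proposal is correct, but there is nothing in the paper to compare it with step by step: the paper states Theorem 3.1 ``for the sake of completeness'' and refers to \cite{BCG, Por} for the proof, giving no argument of its own. Your self-contained route is therefore assessed on its merits, and its ingredients are exactly those of the literature the paper leans on. For (i), note that plain uniform integrability is \emph{not} sufficient for relative weak compactness in $L^1(I,X;\mu)$ when $X$ is infinite dimensional; you correctly supply the missing hypothesis (pointwise containment in the weakly compact sets $\Gamma(s)$), which makes your ``Dunford--Pettis type criterion'' precisely Diestel's theorem (Diestel--Ruess--Schachermayer in the version with variable weakly compact sets), and it should be cited as such rather than treated as folklore; the nonemptiness/selection step is the same device the paper itself uses in the proof of Theorem 3.5 (scalar measurability plus convex weakly compact values yields a measurable selection, cf. \cite{CV}). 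Parts (ii)--(iv) are elementary, as you say: the estimate $\|u_h(t)-u_h(\tau)\|\leq \int_{]\tau,t]}m\,d\mu$, dominated convergence for right continuity, and the subsequence argument upgrading weak to strong convergence are all standard and sound. For (v), the norm compactness of the Aumann integral of an integrably bounded compact convex valued multifunction is a known theorem of Castaing, which the paper invokes later for exactly this purpose (proofs of Theorem 4.1 and Lemma 4.3, citing \cite{Cas2}); you could simply cite it instead of re-proving it, though your step-multifunction approximation sketch is viable: for compact convex values, scalar measurability upgrades to $d_H$-measurability because $d_H(A,B)=\sup_{x'\in D}|\delta^*(x',A)-\delta^*(x',B)|$ for a countable weak-star dense set $D\subset\overline{B}_{X'}$, and approximation by finite Minkowski sums of compacta gives total boundedness. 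Your worry about atoms of $\mu$ is unfounded: non-atomicity matters only for Lyapunov-type convexification of integrals of non-convex valued maps, while here the values are already convex. One last pedantic point: in the closedness argument of (i), the a.e.\ limit is a selection of $\Gamma$ only $\mu$-almost everywhere and must be modified on a $\mu$-null set to meet the ``for all $t\in I$'' clause in the definition of $S^1_\Gamma$; as an element of $L^1(I,X;\mu)$ this changes nothing.
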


Now, we proceed to state  the main existence  results.
We begin with an existence of  a {\it second order BVRC  solution}  to our  evolution inclusion.
\begin{thm}\label{Theorem 3.2}   Let $f : I \longrightarrow E$
be a $\lambda$-measurable mapping  such that $\|f(t)\| \leq M$, for
all $t \in I$, for some nonnegative real constant $M$. Let for every
$t\in I$, $A(t):D(A(t))\subset E\rightrightarrows E$ be a maximal
monotone operator satisfying $(H_1)$, $(H_2)$ and $(H_3)$.
\\Then for any $x_0\in E$,  $u_0\in D(A(0))$, there exists a unique
BVRC solution $(x, u) :I \to E\times E$   to the problem
$$
\begin{cases}
x(t)=x_0+\displaystyle\int_{]0, t] }  u(s)d\nu (s)\;\;\;\forall t\in I;\\
u(0)=u_0;\\
 u(t)\in D(A(t))\;\;\;\forall t\in I;\\
   -\displaystyle\frac{du}{d\nu }(t)\in A(t) u(t)+ f(t)  \frac{d\lambda} {d\nu}(t)  \;\;\;\nu-a.e.\,t\in
   I.
\end{cases}
$$
with the estimate $\frac{du}{d\nu }(t) \in K\overline B_E$
$\nu$-a.e., where $K$ is a positive constant.

\end{thm}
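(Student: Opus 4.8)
The plan is to first solve the first-order inclusion for $u$ alone, namely
$$-\frac{du}{d\nu}(t)\in A(t)u(t)+f(t)\frac{d\lambda}{d\nu}(t)\quad \nu\text{-a.e.},\qquad u(0)=u_0,$$
and then define $x(t)=x_0+\int_{]0,t]}u(s)\,d\nu(s)$: once $u$ is shown to be BVRC and bounded, $x$ is automatically BVRC with $\frac{dx}{d\nu}=u$, so the entire difficulty lies in the $u$-equation. To construct $u$ I would run a semi-implicit catching-up scheme adapted to the controlling function $r$, using partitions $0=t_0^n<\dots<t_{k_n}^n=T$ that capture the (countably many) jumps of $r$ and whose mesh, measured by both $\lambda$ and $r$, tends to $0$. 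Writing $\mu_i^n=\nu(]t_i^n,t_{i+1}^n])$, set $u_0^n=u_0$ and
$$u_{i+1}^n=J^{A(t_{i+1}^n)}_{\mu_i^n}\Big(u_i^n-\int_{]t_i^n,t_{i+1}^n]}f\,d\lambda\Big),$$
so that by construction $u_{i+1}^n\in D(A(t_{i+1}^n))$ and $u_i^n-u_{i+1}^n-\int_{]t_i^n,t_{i+1}^n]}f\,d\lambda\in\mu_i^nA(t_{i+1}^n)u_{i+1}^n$. Let $u_n$ be the right-continuous step interpolant equal to $u_{i+1}^n$ on $]t_i^n,t_{i+1}^n]$ and $\theta_n(t)=t_{i+1}^n$ there.

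Next I would derive the a priori estimates. Applying Lemma \ref{lem2.3}(1) with $A=A(t_i^n)$, $B=A(t_{i+1}^n)$, $\eta=\mu_i^n$ and $x=u_i^n$, the nonexpansiveness of Lemma \ref{lem2.3}(2) to absorb the term $\int_{]t_i^n,t_{i+1}^n]}f\,d\lambda$ (bounded by $M\lambda(]t_i^n,t_{i+1}^n])$), the bound $dis(A(t_i^n),A(t_{i+1}^n))\le r(t_{i+1}^n)-r(t_i^n)\le\mu_i^n$ from $(H_1)$, and the growth bound $(H_2)$ $\|A^0(t_i^n)(u_i^n)\|\le c(1+\|u_i^n\|)$, I obtain a recursion for $\|u_i^n\|$ of the form $\|u_{i+1}^n\|\le(1+\gamma_i)\|u_i^n\|+\alpha_i$, where (after linearizing the square-root term by $\sqrt{xy}\le\tfrac12(x+y)$) the quantities $\gamma_i$ and $\alpha_i$ have finite sums controlled by $c,M,T,r(T)$. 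The discrete Gronwall inequality of Lemma \ref{lem2.5} then bounds $\sup_{i,n}\|u_i^n\|$ by a constant; feeding this back and using $dr(]t_i^n,t_{i+1}^n])\le\mu_i^n$ makes all three terms of Lemma \ref{lem2.3}(1) linear in $\mu_i^n$, so that $\|u_{i+1}^n-u_i^n\|\le K\,\mu_i^n$ for a fixed $K$. Thus the $u_n$ are equi-BVRC in the sense of Theorem \ref{Theorem 3.1} (with $\Gamma\equiv K\overline B_E$, $m\equiv K$). Hypothesis $(H_3)$ makes $\{u_n(t)\}$ relatively compact for every $t$, so Theorem \ref{Theorem 3.1} yields a subsequence with $\frac{du_n}{d\nu}\rightharpoonup w$ weakly in $L^1(I,E;\nu)$ and $u_n(t)\to u(t)$ strongly for all $t$, where $u(t)=u_0+\int_{]0,t]}w\,d\nu$ is BVRC with $\frac{du}{d\nu}=w$ and $\|w\|\le K$ $\nu$-a.e.

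The main obstacle is the passage to the limit in the inclusion. For each fixed $t$, since $u_n(t)\in D(A(\theta_n(t)))$, $u_n(t)\to u(t)$, $dis(A(\theta_n(t)),A(t))\to0$ and $A^0(\theta_n(t))(u_n(t))$ is bounded by $(H_2)$, Lemma \ref{lem2.2} gives $u(t)\in D(A(t))$ for every $t$. For the differential inclusion, set $y_n=-\frac{du_n}{d\nu}-f\frac{d\lambda}{d\nu}\in A(\theta_n(t))u_n(\theta_n(t))$, so $y_n\rightharpoonup y:=-w-f\frac{d\lambda}{d\nu}$ weakly. Fixing $z\in D(A(t))$ with $\zeta\in A(t)z$, I use Lemma \ref{lem2.4} to produce $\zeta_n\in D(A(\theta_n(t)))$ with $\zeta_n\to z$ and $A^0(\theta_n(t))(\zeta_n)\to A^0(t)(z)$; monotonicity of $A(\theta_n(t))$ then gives $\langle y_n-A^0(\theta_n(t))(\zeta_n),\,u_n(\theta_n(t))-\zeta_n\rangle\ge0$. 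Multiplying by any $\varphi\ge0$ in $L^\infty(I,\mathbb{R};\nu)$, integrating, and passing to the limit (weak convergence of $y_n$ against strong convergence of $u_n(\theta_n(\cdot))$ and $\zeta_n$) yields $\int\varphi\,\langle y-\zeta,\,u-z\rangle\,d\nu\ge0$. A $\nu$-Lebesgue-point localization over a countable dense family of pairs $(z,\zeta)$ then gives $\langle y(t)-\zeta,\,u(t)-z\rangle\ge0$ $\nu$-a.e., whence Lemma \ref{lem2.1} yields $y(t)\in A(t)u(t)$ $\nu$-a.e., which is the desired inclusion. The delicate points are that Mazur-type convex combinations would not respect the graphs of the distinct operators $A(\theta_n(t))$, which is why the monotonicity-plus-localization route through Lemmas \ref{lem2.4} and \ref{lem2.1} is preferred, and that the atoms of $\nu$ must be handled together with the diffuse part.

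Finally, uniqueness is comparatively direct and needs no Gronwall here, since $f$ does not depend on the state. If $(x_1,u_1)$ and $(x_2,u_2)$ are two solutions, then $-\frac{du_j}{d\nu}-f\frac{d\lambda}{d\nu}\in A(t)u_j(t)$, and monotonicity of $A(t)$ cancels the common $f$-term to give $\langle\frac{d(u_1-u_2)}{d\nu},\,u_1-u_2\rangle\le0$ $\nu$-a.e. Setting $w=u_1-u_2$ and invoking the Moreau--Valadier differentiation rule for BV functions (\cite{MV}, as used in Lemma \ref{lem2.7}), one gets $d(\tfrac12\|w\|^2)\le\langle w,\tfrac{dw}{d\nu}\rangle\,d\nu\le0$, the jump contributions only decreasing $\|w\|$; hence $\|w(t)\|^2\le\|w(0)\|^2=0$, so $u_1=u_2$ and therefore $x_1=x_2$.
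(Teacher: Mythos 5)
Your skeleton coincides with the paper's: the same catching-up scheme $u_{i+1}^n=J^{A(t_{i+1}^n)}_{\nu(]t_i^n,t_{i+1}^n])}\big(u_i^n-\int_{t_i^n}^{t_{i+1}^n}f\,d\lambda\big)$, the same a priori bounds via Lemma \ref{lem2.3}, $(H_1)$, $(H_2)$ and the discrete Gronwall Lemma \ref{lem2.5}, compactness via $(H_3)$ and Helly, $u(t)\in D(A(t))$ via Lemma \ref{lem2.2}, and uniqueness via monotonicity plus Moreau's differential-measure inequality (no Gronwall needed, as you correctly note). The genuine gap is in your passage to the limit in the inclusion. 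To invoke Lemma \ref{lem2.1} you need a \emph{single} $\nu$-null set $N$ such that for every $t\notin N$ and \emph{every} $z\in D(A(t))$ one has $\langle A^0(t,z)-y(t),z-u(t)\rangle\ge 0$. Your route fixes a pair $(z,\zeta)\in Gr(A(t))$ — which only has meaning at that one value of $t$ — and then integrates in $t$ against test functions $\varphi$: the approximants $\zeta_n$ of Lemma \ref{lem2.4} are produced pointwise with no measurable dependence on $t$, so the integrals $\int\varphi\langle y_n-A^0(\theta_n(\cdot))\zeta_n,\,u_n(\theta_n(\cdot))-\zeta_n\rangle\,d\nu$ are not even well defined; and the final ``localization over a countable dense family of pairs $(z,\zeta)$'' is unavailable, because the graphs $Gr(A(t))$ move with $t$, so no countable family of constant pairs is dense in all of them, while a countable family of \emph{measurable selections} dense in the graphs (a Castaing representation of $t\mapsto Gr(A(t))$, e.g.\ built from resolvents $J^{A(t)}_1 w_j$) is an extra construction you neither carry out nor cite. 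Moreover, your stated reason for rejecting Mazur/Komlos is exactly backwards: the paper's decomposition \eqref{3.21} writes $\langle v'_n(t)+f(t)\frac{d\lambda}{d\nu}(t),\,u(t)-\gamma\rangle$ — which is \emph{linear} in the density $v'_n$ and paired with the fixed vector $u(t)-\gamma$ — as the monotonicity term $\langle A^0(\theta_n(t),\zeta_n),\zeta_n-v_n(\theta_n(t))\rangle$ plus errors bounded by $(K+3M)\big(\|u(t)-v_n(\theta_n(t))\|+\|\zeta_n-\gamma\|\big)\to 0$ pointwise; averaging (Komlos, as the paper does, or Mazur) acts only on the linear side, so the limit inequality holds $\nu$-a.e.\ with a null set coming solely from the Komlos convergence of $(v'_n)$, hence independent of $\gamma$. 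That decomposition is the missing idea.

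A second, repairable, flaw: your right-continuous \emph{step} interpolant $u_n$ has differential measure $\sum_i(u_{i+1}^n-u_i^n)\,\delta_{t_{i+1}^n}$, which is in general not absolutely continuous with respect to $\nu$ (the partition points need not be atoms of $\nu$), so ``$\frac{du_n}{d\nu}\rightharpoonup w$'' is meaningless and Theorem \ref{Theorem 3.1} does not apply to $u_n$. This is precisely why the paper interpolates affinely in the measure $\nu$, formulas \eqref{3.6}--\eqref{3.7}: then $dv_n=v'_n\,d\nu$ with $v'_n$ the discrete difference quotient \eqref{3.13}, the discrete inclusion becomes the genuine density inclusion \eqref{3.16}, and the weak $L^1(I,E;\nu)$ compactness you invoke makes sense. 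With these two corrections (the $\nu$-adapted interpolant, and the Komlos/Mazur argument through \eqref{3.21}) your outline becomes the paper's proof.
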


\proof We choose a sequence $(\varepsilon_n)_n\subset ]0, 1]$, which
decreases to $0$ as $n\to \infty$ and partition
$0=t_0^n<t_1^n<...<t_{k_n}^n=T$ of $I$ such that
\begin{equation}\label{3.1}
|t_{i+1}^n-t_i^n|+dr(]t_i^n, t_{i+1}^n])\leq
\varepsilon_n\;\;\;\textmd{for}\;i=0,...,k_n-1.\end{equation} We set
$I_0^n=\{t_0^n\}$ and  $I_i^n=]t_i^n, t_{i+1}^n]$ for
$i=0,...,k_n-1$.\\Such a partition  can  be obtained by considering
the measure $\nu = dr +\lambda $ using the constructions developed
in Castaing et al \cite{CM}. For $i = 0,...,k_n-1$, let
\begin{equation}\label{3.2} \delta_{i+1}^n=dr(]t_i^n,
t_{i+1}^n])=r(t_{i+1})-r(t_i^n), \;\;\;\; \eta^n_{i+1} = t
^n_{i+1}-t ^n_i, \;\;\;\;  \beta ^n_{i+1}  = \nu(]t^n_i, t^n
_{i+1}]).\end{equation} Let us define, for every $n\geq 1$,
sequences $(x_i^n)_{0\leq i\leq k_n-1}$ and $(u_i^n)_{0\leq i\leq
k_n-1}$ such that $x_0^n=x_0$, $u_0^n=u_0\in D(A(0))$, and for $i =
0,...,k_n-1$, \begin{equation}\label{3.3} u_{i+1}^n= J^n_{i+1}
\Big(u_i^n-\int_{t_i^n}^{t_{i+1}^n}
f(s)d\lambda(s)\Big)\end{equation} and
\begin{equation}\label{3.4}
x_{i+1}^n=x_i^n+\beta_{i+1}^nu_{i+1}^n, \end{equation} with
$J^n_{i+1}:=J^{A(t_{i+1}^n)}_{\beta^n_{i+1}} =\big(I_E+
\beta^n_{i+1} A(t_{i+1}^n)\big)^{-1}$. \\Remark that by the
definition of the resolvent  we have  $u^n_{i+1}  \in
D(A(t_{i+1}^n))$ and
\begin{equation}\label{3.5} -\frac{1} {\beta^n_{i+1}}\Big(u^n_{i+1}-
u_i^n+\int_{t_i^n}^{t_{i+1}^n} f(s)d\lambda(s)\Big) \in A(
t_{i+1}^n) u^n _{i+1}. \end{equation} For $t\in [t^n_i, t
^n_{i+1}[$,  $i = 0,...,k_n-1$, set
\begin{equation}\label{3.6}
x_n(t)=x_{i}^n+ \frac{ \nu(]t^n_i, t])} {  \nu(]t^n_i, t^n
_{i+1}])}( x_{i+1}^n-x_i^n)\end{equation} and
\begin{equation}\label{3.7}
v_n(t)=   u_i^n + \frac{ \nu(]t^n_i, t])  }{ \nu(]t^n_i, t
^n_{i+1}])} \Big(u_{i+1}^n-u_i^n+\int_{t_i^n}^{t_{i+1}^n} f(s)
d\lambda(s)\Big)  - \int_{t_i^n}^{t} f(s)d\lambda(s),\end{equation}
so that $v_n$, $x_n$   are of bounded variation and right continuous
on $I$, with $v_n(t_i^n)=u_i^n$ and $x_n(t_i^n)=x_i^n$.
\\

 \textit{Step 1.}
 Let us show that the sequence $(v_n)$ of step approximations
is uniformly bounded in  norm and variation.\\We have from
\eqref{3.3}, Lemma \ref{lem2.3}, $(H1)$,  $(H2)$ and the boundedness
of $f$, for $i=0,...,k_n-1$,
\begin{eqnarray*}
\|
u_{i+1}^n-u_i^n\|&=&\Big\|J_{i+1}^n\Big(u_i^n-\int_{t_i^n}^{t_{i+1}^n}
f(s)d\lambda(s)\Big)-u_i^n\Big\|\\
&\leq& \Big\|J_{i+1}^n\Big(u_i^n- \int_{t_i^n}^{t_{i+1}^n}
f(s)d\lambda(s)\Big)-J_{i+1}^n(u_i^n)\Big\|+\big\|J_{i+1}^n(u_i^n)-u_i^n\big\|\\
&\leq& \int_{t_i^n}^{t_{i+1}^n} \|f(s)\|
d\lambda(s)+\beta_{i+1}^n\|A^0(t_i^n,
u_i^n)\|+dis\big(A(t_{i+1}^n),A(t_i^n)\big)\\&+&\sqrt{\beta_{i+1}^n\big(1+\|A^0(t_i^n,
u_i^n)\|\big)dis\big(A(t_{i+1}^n),A(t_i^n)\big)}\\
&\leq& M\beta_{i+1}^n +\big(1+c(1+\|u_i^n\|)\big)\beta_{i+1}^n+
\sqrt{\big(1+c(1+\|u_i^n\|)\big)(\beta_{i+1}^n)^2}\\
&\leq& M\beta_{i+1}^n +\big(1+c(1+\|u_i^n\|)\big)\beta_{i+1}^n
+\big(1+c(1+\|u_i^n\|)\big)\beta_{i+1}^n,
\end{eqnarray*}
that is, \begin{equation}\label{3.8}\| u_{i+1}^n-u_i^n\|\leq \bigg(
2c\|u_i^n\|+2(1+c)+M\bigg)\beta_{i+1}^n.\end{equation} Then,
$$ \|u_{i+1}^n\|\leq
\big(1+2c\beta_{i+1}^n\big)\|u_i^n\|+\big(2(1+c)+M\big)\
\beta_{i+1}^n.$$ By Lemma \ref{lem2.5} we get
\begin{eqnarray*}\|u_i^n\|&\leq&
\Big(\|u_0\|+\big(2(1+c)+M\big)\sum_{k=0}^{i-1}
\beta_{k+1}^n\Big)\exp\Big((2c)\sum_{k=0}^{i-1}
\beta_{k+1}^n\Big)\\
&\leq&\Big(\|u_0\|+\big(2(1+c)+M\big)\nu(]0,T])\Big)\exp\Big(2c
\nu(]0,T])\Big)=:K_1,
\end{eqnarray*}
and by \eqref{3.8}
$$\|u_{i+1}^n-u_i^n\|\leq\Big(2c K_1+2(1+c)+M\Big)\beta_{i+1}^n=:K_2\beta_{i+1}^n.$$
  So that, if we set $K=max\{K_1,K_2\}$, we conclude that for $0\leq i \leq k_n$, resp. $i<k_n$:
\begin{equation}\label{3.9}\|u_i^n\|\leq K,\;\;
\textmd{resp.} \;\;\|u_{i+1}^n-u_i^n\|\leq K\nu(]t_i^n, t_{i+1}^n]).
\end{equation}
Now, for $t\in [t_i^n, t_{i+1}^n[$, we have from \eqref{3.1},
\eqref{3.7} and  \eqref{3.9},
\begin{eqnarray}\label{3.10}
\|v_n(t)-u_i^n\| &=&  \Big\| \frac{ \nu(]t^n_i, t])  }{ \nu(]t^n_i,
t ^n_{i+1}])} \Big(u_{i+1}^n-u_i^n+\int_{t_i^n}^{t_{i+1}^n} f(s)
d\lambda(s)\Big)  - \int_{t_i^n}^{t}
f(s)d\lambda(s)\Big\|\nonumber\\
&\leq& \|u_{i+1}^n-u_i^n\|+2M\delta_{i+1}^n \leq (K+2M)
\varepsilon_n =: M_1\varepsilon_n.\end{eqnarray}
  Furthermore, from \eqref{3.9} and \eqref{3.10}, for all $n\in
  \mathbb{N}$,
\begin{equation*}\|v_n(t)\|\leq K+M_1\varepsilon_n
\leq K+M_1=:M_2\;\;\;\forall t\in I,\end{equation*}  that is
\begin{equation}\label{3.11}
\sup_n\|v_n\|=\sup_n\big(\sup_{t\in I}\|v_n(t)\|\big)\leq M_2.
\end{equation}
On the other hand, if we fix
 $s\in [t_i^n, t_{i+1}^n[$ and $t\in [t_j^n, t_{j+1}^n[$ with
$j>i$, we get by \eqref{3.9} and \eqref{3.10},
\begin{eqnarray*}
\|v_n(t)-v_n(s)\|&\leq&
\|v_n(t)-u_j^n\|+\|u_j^n-u_i^n\|+\|v_n(s)-u_i^n\|\\ &\leq&
2M_1\varepsilon_n+ \sum_{k=0}^{j-i-1}
\|u_{i+k+1}^n-u_{i+k}^n\| \\
&\leq&2M_1\varepsilon_n+ K \sum_{k=0}^{j-i-1} \nu\big(]t_{i+k}^n,
t_{i+k+1}^n]\big)=2M_1\varepsilon_n+K\nu(]t_i^n, t_j^n])\\&\leq&
2M_1\varepsilon_n+ K\nu(]t_i^n, t]) \leq2M_1\varepsilon_n+
K\big(\nu(]t_i^n, s])+\nu(]s, t])\big)\\&\leq&2M_1\varepsilon_n+
K\big(\nu(]t_i^n, t_{i+1}^n[)+\nu(]s, t])\big).
\end{eqnarray*}
Finally, from \eqref{3.1}, we obtain for $n\in \mathbb{N}$ and
$0\leq s\leq t\leq T$,
\begin{equation}\label{3.12} \|v_n(t)-v_n(s)\|\leq
K\nu(]s,t])+(K+2M_1) \varepsilon_n.\end{equation}

\textit{ Step 2.} Convergence of the sequences $(v_n)$ and
$(x_n)$.\\
Let us  define $\theta_n: I\to I$ through
$$\theta_n(t)=t_{i+1}^n\,\quad {\rm for}\,\, t\in ]t_i^n,
t_{i+1}^n],\;i=0,1,...,k_n-1,$$ and $\theta_n(0)=0$. By \eqref{3.3},
we know that for all $t\in I$, $v_n(\theta_n(t))\in
D\big(A(\theta_n(t))\big)$, using hypothesis $(H_3)$, we
conclude that $\big(v_n(\theta_n(t))\big)$ is relatively compact.\\
On the other hand, observe that for all $t\in I$, $\nu(]t,
\theta_n(t)])\to 0$ as $n\to\infty$. So that, from \eqref{3.12},
 $\|v_n(\theta_n(t))-v_n(t)\|\to 0$. That is for all $t\in I$,
 $(v_n(t))$ is also  relatively compact.

Since the sequence $(v_n)$ of BVRC mappings is uniformly bounded in
variation and in norm such that   $(v_n(t)), t \in I$,  is  {\bf relatively compact}, using Helly-Banach's theorem \cite{Por},  
we may assume that there is a BV mapping $u:I\longrightarrow E$ such
that $(v_n(t))$ converges strongly to $u(t)$ for every $t\in I$.
 In particular, $u(0)=u_0$ and by taking the limit in \eqref{3.12}, we get
$$ \|u(t)-u(s)\|\leq K \nu(]s,t])\;\;\;\; \textmd{for}\; 0\leq s\leq t\leq T.$$
It is clear that $u$  is BVRC, and that, $\|du\|\leq Kd\nu$ in the
sense of the ordering of real measures and there exists a density
$u'$ of $du$ with respect to $d\nu$: $du= u' \, d\nu$.
 Consequently, $\| u'\|_{\infty}\leq K$, so that $ u'\in L^1(I, E;
 d\nu)$.\\
 Moreover, since
 $\|v_n(t)-u(t)\|\to 0$, we obtain for all $t\in I$,
 \begin{equation}\label{3.15}\|v_n(\theta_n(t))-u(t)\|\leq
\|v_n(t)-u(t)\|+\|v_n(t)-v_n(\theta_n(t))\|\to
0\;\;\;\textmd{as}\;n\to\infty.\end{equation}

Next remark that  $dv_n= v'_n\, d\nu$, where the density $v'_n$ is
given $d\nu$-almost everywhere by
\begin{equation}\label{3.13}
v'_n(t)=\frac{1}{\beta_{i+1}^n}\Big(u_{i+1}^n-u_i^n+\int_{t_i^n}^{t_{i+1}^n}
f(s)d\lambda(s)\Big)-f(t) \frac{d\lambda}{d\nu} (t)
\;\;\textmd{for}\;t\in ]t_i^n, t_{i+1}^n[.\end{equation} So that, by
\eqref{3.9} and the boundedness of $f$, we get
\begin{equation}\label{3.14} \| v'_n\|_{\infty}\leq
K+2M.\end{equation}  Extracting   a subsequence (not relabeled) we
may assume  that $( v'_n)$ converges weakly in $L^1(I, H; d\nu)$ to
some mapping $w\in L^1(I, H; d\nu)$ with $\|w(t\| \leq K+2M$
$d\nu$-a.e. Whence for $t\in ]0, T]$
\begin{eqnarray*}
 u(t)-u(0) &=&\lim_{n\to\infty}  \big(v_n(t)-v_n(0)\big)=\lim_{n\to\infty}
dv_n(]0,t])\\
&=& \lim_{n\to\infty}  \int_{]0,t]}  v'_n d\nu=\lim_{n\to\infty}
\int \textbf{1}_{]0,t]}
 \ v'_n d\nu
=\int \textbf{1}_{]0,t]} w d\nu
\end{eqnarray*}
that is
$$ ( u' d\nu)(]0,t])=du(]0, t])=u(t)-u(0)=\int_{]0,t]} w d\nu=(w
d\nu)(]0,t])\;\;\forall t\in I.$$ Thus $ u'=w$, $d\nu$-a.e. in $I$,
i.e.,  $(v'_n)$ converges weakly to $ u'$ in $L^1(I, H; d\nu)$.

  Now we note that
$$ x_n(t) = x_0 + \int_{ ]0, t] } v( \theta_n(s)) d\nu(s) \;\;\;\;
\forall t \in I.  $$ Indeed,
\begin{eqnarray*}&&x_0 + \int_{ ]0, t] } v( \theta_n(s)) d\nu(s)\\ &=&x_0 +
\int_{ ]0, t ^n_1] } v( \theta_n(s)) d\nu(s)+   \int_{ ]t^n_1, t
^n_2] } v( \theta_n(s)) d\nu(s) + \cdots  +  \int_{ ]t^n_i, t] } v(
\theta_n(s)) d\nu(s)\\
 &=& x_0+ \beta^n_1 u^n_1+  \beta^n_2 u^n_2+ \cdots+ \nu(]t^n_i, t])
u^n_{i+1}\\
 &= &x ^n_1+  \beta^n_2 u^n_2+ \cdots + \nu(]t^n_i, t]) u^n_{i+1}\\
&=&x^n_i + \nu(]t^n_i, t])  \frac{ x^n_{i+1}-x^n_i}{\beta^n_{i+1}} =
x_n(t). \end{eqnarray*} As consequence we obtain from \eqref{3.9}
and \eqref{3.15},
\begin{equation*}\lim_{n \to \infty} x_n(t) = x_0+ \lim_{n \to \infty}
\int_{]0, t ] } v_n ( \theta_n(s)) d\nu(s) = x_0+  \int_{]0, t ] }
u(s)  d\nu(s) =: x(t),\end{equation*} that is $dx = u$ $\nu$-a.e.
\\

\textit{Step 3.} We are going to show in this step that  
$- u' (t)
-f(t)\frac {d\lambda}{d\nu }(t) \in A(t) u(t)) $ $\nu$-a.e.\\
 Referring to  \eqref{3.5} and \eqref{3.13}, there
is a $\nu$-null set $N_n$ such that
\begin{equation}\label{3.16}-\frac{dv_n} {d\nu} (t) - f(t)
\frac{d\lambda}{d\nu} (t) \in A (\theta_n(t))
v_n(\theta(t))\;\;\;\forall t\in I\setminus N_n,
\end{equation}
further \begin{equation}\label{3.17} v_n(\theta_n(t))\in D(A
(\theta_n(t)))\;\;\;\forall t\in I.
\end{equation}
So,  by \eqref{3.15}, \eqref{3.17} and the fact that
$dis(A(\theta_n(t)), \theta(t))\to 0$ as $n\to\infty$, using Lemma
\ref{lem2.2}, we conclude that $u(t) \in D(A(t))$ for all $t\in I$.
Consequently, for our goal, using Lemma \ref{lem2.1}, it is enough
to check  that for
  $\nu $ almost every $t\in I$
 and for all $\gamma\in D(A(t))$,
\begin{equation*}
\Big\langle  \frac{du}{d\nu}(t)+  f(t) \frac{d\lambda}{d\nu} (t) ,
u(t)-\gamma\Big\rangle\leq \Big\langle A^0(t,\gamma),
\gamma-u(t)\Big\rangle.
\end{equation*}
Indeed, since for all $t\in I$, $dis(A(\theta_n(t)),A(t))\to 0$ as
$n\to\infty$ and since $(H_2)$ is satisfied, we may apply Lemma
\ref{lem2.4}, to find a sequence $(\zeta_n)$ such that
\begin{equation}\label{3.18} \zeta_n\in
D\big(A(\theta_n(t))\big),\;\; \zeta_n\to \gamma\;\;
\textmd{and}\;\; A^0(\theta_n(t), \zeta_n)\to
A^0(t,\gamma).\end{equation} Since $A(t)$ is monotone, in particular
by \eqref{3.16}, for $t\in I\setminus N_n$
\begin{equation}\label{3.19}\Big \langle \frac{dv_n} {d\nu}(t)+f(t) \frac
{d\lambda}{d\nu }(t), v_n(\theta_n(t))-\zeta_n\Big\rangle\leq
\Big\langle A^0(\theta_n(t),\zeta_n),
\zeta_n-v_n(\theta_n(t))\Big\rangle.\end{equation} Since $( v'_n)$
weakly converges to $u'$ in $L ^1(I, E; d\nu)$, $(v'_n)$ Komlos
converges $\nu$-a.e to $u'$, then there is a negligible set $N$ such
that for $t \notin N$ \begin{equation}\label{3.20}\lim_{n\to\infty}
\frac {1}{n} \sum_{j= 1} ^n \ v'_j (t) =
u'(t)=\frac{du}{d\nu}(t).\end{equation}
 Whence, since
 \begin{eqnarray}\label{3.21}  &&\big\langle  v'_n(t) +f(t)\frac {d\lambda}{d\nu }(t), u(t) - \gamma  \big\rangle =
 \big\langle  v'_n(t)+f(t ) \frac {d\lambda}{d\nu }(t) , v_n(\theta_n(t))-\zeta_n
 \big\rangle\nonumber
\\&+& \big\langle  v'_n(t)+ f(t)\frac {d\lambda}{d\nu }(t) ,  u(t)
-v_n(\theta_n(t)) \big\rangle+ \big\langle  v'_n(t)+ f(t)\frac
{d\lambda}{d\nu }(t),  \zeta_n -\gamma \big\rangle,\end{eqnarray}
then
\begin{eqnarray*}&&\frac{1}{n} \sum_{j= 1}^n \big\langle  v'_j
(t)+ f(t)\frac {d\lambda}{d\nu }(t),  u(t) - \gamma \big\rangle=
 \frac{1}{n}  \sum_{j= 1}^n \big\langle  v'_j(t)+ f(t)\frac {d\lambda}{d\nu }(t)  , v_j(\theta_j(t))-\zeta_j
 \big\rangle\\
&+& \frac{1}{n}  \sum_{j= 1}^n   \big\langle  v'_j(t)+ f(t)\frac
{d\lambda}{d\nu }(t) ,  u(t) -v_j(\theta_j(t)) \big\rangle +
\frac{1}{n}  \sum_{j= 1}^n \big\langle  v'_j(t)+  f(t)\frac
{d\lambda}{d\nu }(t) ,  \zeta_j  -\gamma \big\rangle,
\end{eqnarray*}
so that, by \eqref{3.19}, \eqref{3.14} and since $f$ is bounded, for
$t\in I\setminus (\underset{n}{\cup}N_n\cup N)$,
\begin{eqnarray*}&&\frac{1}{n} \sum_{j= 1}^n \big\langle v'_j (t)+
f(t)\frac {d\lambda}{d\nu }(t), u(t) - \gamma \big\rangle \leq
\frac{1}{n}  \sum_{j= 1}^n \big\langle  A^0(
\theta_j(t), \zeta_j) , \zeta_j   -v_j(\theta_j(t)) \big\rangle\\
&+& (K+ 3M) \frac{1}{n}  \sum_{j= 1}^n     \|u(t) -v_j(\theta_j
(t))\| + (K+ 3M)  \frac{1}{n} \sum_{j= 1}^n \|\zeta_j
-\gamma\|.\end{eqnarray*} Passing to the limit  when $n\rightarrow
\infty$, in this inequality, we get by  \eqref{3.20}, \eqref{3.18}
and \eqref{3.15} \begin{equation*}\big\langle  u'(t)+ f(t)\frac
{d\lambda}{d\nu }(t), u(t) -\gamma \big\rangle \leq  \big\langle
A^0(t, \gamma), \gamma-u(t) \big\rangle\;\;\;\nu-a.e.\end{equation*}
  This inequality can be also obtained by
applying the Mazur's trick to $(v'_n)$ via the inequality
\eqref{3.21}.
 As a consequence,
$-\frac{du}{d\nu}(t) -f(t)\frac {d\lambda}{d\nu }(t)  \in A(t) u(t))
$ $\nu$-a.e. with $u(0)=u_0$, and
 $\| u'(t)\|\leq K, \hskip 2pt  \nu-a.e.\, t\in I$. \\The uniqueness
 of the solution is a consequence of the monotonicity of $A(t)$.
This completes our proof. \finproof
\\

As a by product of Theorem \ref{Theorem 3.2} we mention  some useful
application.

\begin{thm} \label{Theorem 3.3}
Under the hypotheses of Theorem \ref{Theorem 3.2}, for any  $u_0\in
D(A(0))$, there exists a unique BVRC solution $u :I \to E$ to the
problem
$$
\begin{cases}
u(0)=u_0;\\
 u(t)\in D(A(t))\;\;\;\forall t\in I;\\
 \frac{du}{d\nu}\in L^{\infty}(I, E; d\nu)\\
   -\displaystyle\frac{du}{d\nu }(t)\in A(t) u(t)+ f(t)  \frac{d\lambda} {d\nu}(t)  \;\;\;\nu-a.e.\,t\in
   I.
\end{cases}
$$
\end{thm}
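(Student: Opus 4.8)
The plan is to read off Theorem~\ref{Theorem 3.3} directly from Theorem~\ref{Theorem 3.2}, since the first-order inclusion stated here is precisely the $u$-line of the system solved there; no new approximation scheme is needed. For existence I would fix an arbitrary $x_0\in E$ (for instance $x_0=0$) and apply Theorem~\ref{Theorem 3.2} to the data $f$, $(A(t))_{t\in I}$ and $u_0\in D(A(0))$. This yields a pair $(x,u)$, and I would simply discard the auxiliary component $x$: the mapping $u:I\to E$ is BVRC, satisfies $u(0)=u_0$, $u(t)\in D(A(t))$ for every $t\in I$, and $-\frac{du}{d\nu}(t)\in A(t)u(t)+f(t)\frac{d\lambda}{d\nu}(t)$ for $\nu$-a.e.\ $t$. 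The bound $\frac{du}{d\nu}(t)\in K\overline B_E$ $\nu$-a.e.\ provided by Theorem~\ref{Theorem 3.2} gives $\|\frac{du}{d\nu}\|_\infty\le K$, i.e.\ $\frac{du}{d\nu}\in L^\infty(I,E;d\nu)$. Hence $u$ satisfies all four requirements and existence follows.

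For uniqueness I would argue from monotonicity alone, using crucially that the perturbation $f(t)$ is state-independent. Let $u_1,u_2$ be two solutions and set $w:=u_1-u_2$, a BVRC mapping with $w(0)=u_0-u_0=0$. For $\nu$-a.e.\ $t$ one has $-\frac{du_i}{d\nu}(t)-f(t)\frac{d\lambda}{d\nu}(t)\in A(t)u_i(t)$ for $i=1,2$, so the monotonicity of $A(t)$, with the two identical $f$-terms cancelling, would give
$$\Big\langle \frac{du_1}{d\nu}(t)-\frac{du_2}{d\nu}(t),\, u_1(t)-u_2(t)\Big\rangle\le 0\qquad \nu\text{-a.e.},$$
i.e.\ $\langle \frac{dw}{d\nu}(t),w(t)\rangle\le 0$ $\nu$-a.e.

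To turn this density inequality into $w\equiv 0$, the plan is to invoke Moreau's rule for the differential measure of the squared norm of a BVRC function, namely $d(\|w\|^2)\le 2\langle w,dw\rangle$ as real measures (the diffuse parts agree, while each atom contributes the nonpositive term $-\|w(t)-w(t^-)\|^2$). Taking densities with respect to $\nu$ and using $dw=\frac{dw}{d\nu}\,d\nu$, this yields $\frac{d(\|w\|^2)}{d\nu}\le 2\langle w,\frac{dw}{d\nu}\rangle\le 0$ $\nu$-a.e., so the nonnegative BVRC function $t\mapsto\|w(t)\|^2$ has a nonpositive differential measure. Consequently $\|w(t)\|^2-\|w(0)\|^2=d(\|w\|^2)(]0,t])\le 0$ for all $t$, and since $w(0)=0$ I conclude $w\equiv 0$, that is $u_1=u_2$.

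The only genuinely non-formal step, and the one I expect to require the most care, is this passage from the pointwise monotonicity inequality on $\nu$-densities to the comparison of differential measures of $\|w\|^2$: the elementary chain rule fails at the atoms of $\nu$, and it is precisely Moreau's result on BV functions (the same tool underlying Lemma~\ref{lem2.7}) that repairs it. Everything else is immediate once Theorem~\ref{Theorem 3.2} is available.
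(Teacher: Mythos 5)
Your proposal is correct and takes essentially the same route as the paper: Theorem \ref{Theorem 3.3} is presented there as a direct by-product of Theorem \ref{Theorem 3.2}, obtained exactly as you do by discarding the auxiliary component $x$ and reading the bound $\frac{du}{d\nu}(t)\in K\overline{B}_E$ $\nu$-a.e.\ as the $L^{\infty}(I,E;d\nu)$ membership. Your uniqueness argument (monotonicity of $A(t)$ with the state-independent terms $f(t)\frac{d\lambda}{d\nu}(t)$ cancelling, followed by Moreau's differential-measure inequality $d\|w\|^{2}\le 2\langle w,dw\rangle$ for BVRC functions) is precisely the argument the paper relies on, stated in one line at the end of the proof of Theorem \ref{Theorem 3.2} and carried out in detail in the proof of Corollary \ref{Corollaire 3.1}.
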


In the following we apply our result and tools developed above to
the sweeping process.

\begin{thm} \label{Theorem  3.4}
For every $t\in I$,  let us consider  the  closed convex valued
mapping  $C: I \rightrightarrows E$  such that $d_H (C(t), C(s))
\leq r(t) - r(\tau), \forall  \tau \leq t \in I$ and C(t) is ball-compact. Let $f : I \to E$
be a bounded $\lambda$-measurable mapping.    Then for all $u_0\in
C(0)$, there is a unique  BVRC solution $u(\cdot)$ to the evolution
problem
$$
\begin{cases}
u(0)=u_0;\\
 u(t)\in C(t)\;\;\;\forall t\in I;\\
 \frac{du}{d\nu}\in L^{\infty}(I, E; d\nu);\\
   -\displaystyle\frac{du}{d\nu }(t)\in N_{C(t)}  (u(t))+ f(t)  \frac{d\lambda} {d\nu}(t)  \;\;\;\nu-a.e.\,t\in
   I,
\end{cases}
$$
\end{thm}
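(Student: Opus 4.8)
The plan is to recognize the sweeping process as the instance $A(t):=N_{C(t)}=\partial\delta_{C(t)}$ of the abstract inclusion \eqref{1.1} and then invoke Theorem \ref{Theorem 3.3}. Since each $C(t)$ is nonempty, closed and convex, the indicator function $\delta_{C(t)}$ is proper, convex and lower semicontinuous, so its subdifferential $N_{C(t)}=\partial\delta_{C(t)}$ is a maximal monotone operator with domain $D(A(t))=C(t)$. With this identification the constraint $u(t)\in D(A(t))$ reads $u(t)\in C(t)$, and $-\frac{du}{d\nu}(t)\in A(t)u(t)+f(t)\frac{d\lambda}{d\nu}(t)$ is precisely the stated sweeping process. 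Hence it suffices to verify that this choice of $A(t)$ satisfies $(H_1)$, $(H_2)$, $(H_3)$, and then read off existence, uniqueness and the $L^\infty$ bound on the density from Theorem \ref{Theorem 3.3}.

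For $(H_1)$ I would use the identity recalled in the introduction, namely $dis\big(N_{C(t)},N_{C(s)}\big)=d_H\big(C(t),C(s)\big)$ for all $t,s\in I$. Combined with the standing hypothesis $d_H\big(C(t),C(\tau)\big)\le r(t)-r(\tau)$ for $\tau\le t$, this gives $dis\big(A(t),A(s)\big)\le r(t)-r(s)=dr(]s,t])$ for $0\le s\le t\le T$, which is exactly $(H_1)$. Hypothesis $(H_2)$ is trivial in this setting: for every $x\in C(t)$ the set $N_{C(t)}(x)$ is a convex cone and therefore contains the origin, so the minimal-norm element satisfies $A^0(t,x)=0$, whence $\|A^0(t,x)\|=0\le c(1+\|x\|)$ holds with $c=0$. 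The boundedness and $\lambda$-measurability of $f$, together with $u_0\in C(0)=D(A(0))$, match the remaining assumptions of Theorem \ref{Theorem 3.2}.

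The genuine obstacle is $(H_3)$, since as stated it concerns the ball-compactness of $\bigcup_{t\in I}C(t)$, which need not follow from the per-$t$ ball-compactness of each $C(t)$. I would therefore return to the one place where $(H_3)$ is actually used in the proof of Theorem \ref{Theorem 3.2}: in Step 2 it serves only to guarantee, for each fixed $t\in I$, the relative compactness of the sequence $\big(v_n(\theta_n(t))\big)_n$, where $v_n(\theta_n(t))\in C(\theta_n(t))$, $\theta_n(t)\downarrow t$, and $\|v_n(\theta_n(t))\|\le K$. This pointwise compactness can be recovered from ball-compactness of $C(t)$ alone, as follows. Since $x_n:=v_n(\theta_n(t))\in C(\theta_n(t))$, the Hausdorff--Lipschitz bound yields $d\big(x_n,C(t)\big)\le d_H\big(C(\theta_n(t)),C(t)\big)\le r(\theta_n(t))-r(t)\to 0$, the last convergence holding by right continuity of $r$ and $\theta_n(t)\downarrow t$. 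Setting $y_n:=\mathrm{proj}_{C(t)}(x_n)$ (well defined since $C(t)$ is closed convex), one gets $\|x_n-y_n\|\to 0$ and $\|y_n\|\le K+1$ for large $n$, so $y_n\in C(t)\cap (K+1)\overline{B}_E$, which is compact by ball-compactness of $C(t)$. Thus $(y_n)$, and hence $(x_n)$, is relatively compact, which is exactly the input required in Step 2.

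With this pointwise compactness in hand, the remainder of the argument of Theorem \ref{Theorem 3.2} transfers without change: the Helly--Banach extraction produces a BVRC limit $u$ with $\|du\|\le K\,d\nu$ and density $\frac{du}{d\nu}\in L^\infty(I,E;d\nu)$; Lemma \ref{lem2.2} gives $u(t)\in D(A(t))=C(t)$ for all $t$; and the passage to the limit in the discretized inclusion \eqref{3.16}, using Lemmas \ref{lem2.1} and \ref{lem2.4} together with the Koml\'os/Mazur trick, delivers $-\frac{du}{d\nu}(t)-f(t)\frac{d\lambda}{d\nu}(t)\in N_{C(t)}(u(t))$ $\nu$-a.e. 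Uniqueness follows, exactly as in Theorem \ref{Theorem 3.2}, from the monotonicity of $N_{C(t)}$. The main point to be careful about is precisely the reconciliation of the per-$t$ ball-compactness hypothesis with the use of $(H_3)$ in the abstract theorem; everything else is a direct specialization.
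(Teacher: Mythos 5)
Your proposal is correct and follows the same route as the paper: the paper's entire proof of this theorem is the single sentence ``the proof is immediate by taking $A(t) = \partial\delta_{C(t)} = N_{C(t)}$,'' i.e.\ exactly your reduction to Theorem 3.3 (hence to Theorem 3.2). What you do differently is verify the hypotheses rather than declare them immediate, and in doing so you isolate the one point that is genuinely not immediate: the theorem assumes each individual set $C(t)$ is ball-compact, whereas $(H_3)$ asks for ball-compactness of $\bigcup_{t\in I}D(A(t))=\bigcup_{t\in I}C(t)$. Your repair is sound: Theorem 3.4 rests only on Theorem 3.3, hence only on Theorem 3.2, and in the proof of Theorem 3.2 hypothesis $(H_3)$ enters solely in Step 2, to give relative compactness of $\big(v_n(\theta_n(t))\big)_n$ for each fixed $t$ with $\theta_n(t)\downarrow t$; your projection estimate $d\big(v_n(\theta_n(t)),C(t)\big)\le r(\theta_n(t))-r(t)\to 0$, valid by right continuity of $r$, recovers exactly that compactness from ball-compactness of the single set $C(t)$. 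The remaining verifications ($D(N_{C(t)})=C(t)$; $(H_1)$ from the identity $dis(N_{C(t)},N_{C(s)})=d_H(C(t),C(s))$ recalled in the introduction; $(H_2)$ with $c=0$ since $0\in N_{C(t)}(x)$ for $x\in C(t)$) are as the paper intends. One further remark: under the combined hypotheses the literal $(H_3)$ does in fact hold, so an alternative reconciliation is available --- given $x_n\in C(t_n)$ with $\|x_n\|\le R$, pass to a monotone subsequence $t_n\to t^*$ and use right continuity of $r$ (approach from above) or the fact that $(C(t_n))$ is $d_H$-Cauchy because $r$ is monotone and bounded (approach from below) to show $\{x_n\}$ is totally bounded; but your localized argument is shorter and avoids this extra lemma. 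Either way, your write-up supplies precisely the justification that the paper's ``immediate'' conceals.
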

\proof The proof is immediate by taking $A(t) = \partial \delta
_{C(t)} = N_{C(t)}$. \finproof\\

To be able to prove the main  theorem of this section, let us begin
by a compactness result.
\begin{lem}\label{Lemma 3.1}
Let for every $t\in I$, $A(t):D(A(t))\subset E\rightrightarrows E$
be a maximal monotone operator satisfying $(H_1)$, $(H_2)$ and
$(H_3)$.
 Let $X : I \rightrightarrows  E$ be a convex
weakly compact valued  measurable multi-mapping such that $X(t)
\subset M{\overline B}_E$, for all $t \in I$, where $M$ is a
nonnegative constant. Then the BVRC solutions set ${\mathcal X} :=
\{u_f : f \in S ^1_X\}$, where $S ^1_X$ denotes the set of all
$L^1(I, E; \lambda) $-selections of $X$, to the evolution inclusion
$$
\begin{cases}
u(0)=u_0\in D(A(0));\\
 u(t)\in D(A(t))\;\;\;\forall t\in I;\\
   -\displaystyle\frac{du}{d\nu }(t)\in A(t) u(t)+ f(t)   \frac{d\lambda} {d\nu}(t)  \;\;\;d\nu -a.e.\;t\in
   I,
\end{cases}
$$
 is nonempty and sequentially compact
with respect to the pointwise convergence on $I$.
\end{lem}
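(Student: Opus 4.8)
The plan is to derive nonemptiness directly from the single-valued existence theorem and to prove sequential compactness by a Helly-type argument on the uniformly bounded-variation solutions, followed by a monotonicity passage to the limit modelled on Steps~1--3 of the proof of Theorem \ref{Theorem 3.2}, the new feature being that the perturbation is now allowed to vary over $S^1_X$. For nonemptiness: since $X$ is convex weakly compact valued, measurable, and satisfies $X(t)\subset M\overline B_E$, Theorem \ref{Theorem 3.1}(i) (with $m\equiv M$) shows $S^1_X$ is convex weakly compact, in particular nonempty. For each $f\in S^1_X$ one has $\|f(t)\|\leq M$ for all $t$ and $f$ is $\lambda$-measurable, so Theorem \ref{Theorem 3.3} (equivalently Theorem \ref{Theorem 3.2}) produces the unique BVRC solution $u_f$, whence $\mathcal X\neq\emptyset$.

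For the compactness, take $(u_{f_n})$ with $f_n\in S^1_X$ and write $u_{f_n}':=du_{f_n}/d\nu$. First I would observe that, because $\|f_n(t)\|\leq M$ uniformly, the a priori estimates in the proof of Theorem \ref{Theorem 3.2} hold with constants independent of $n$: $\|u_{f_n}(t)\|\leq K$, $\|u_{f_n}(t)-u_{f_n}(s)\|\leq K\nu(]s,t])$ for $s\leq t$, and $\|u_{f_n}'\|_\infty\leq K+2M$. Since $u_{f_n}(t)\in D(A(t))$ and $\|u_{f_n}(t)\|\leq K$, hypothesis $(H_3)$ makes $\{u_{f_n}(t):n\in\mathbb N\}$ relatively compact for every $t$. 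The sequence being uniformly bounded in norm and variation with pointwise relatively compact values, Helly--Banach's theorem yields a subsequence (not relabeled) and a BVRC map $u$ with $u_{f_n}(t)\to u(t)$ strongly for all $t$ and $\|u(t)-u(s)\|\leq K\nu(]s,t])$, so $du=u'\,d\nu$ with $\|u'\|_\infty\leq K$. Simultaneously, by Theorem \ref{Theorem 3.1}(i), I extract $f_n\to f$ weakly in $L^1(I,E;\lambda)$ with $f\in S^1_X$, and by the $L^\infty$-bound on the densities a further subsequence with $u_{f_n}'\to u'$ weakly in $L^1(I,E;\nu)$, the weak limit being identified with $u'$ exactly as in Theorem \ref{Theorem 3.2}, by testing against $\mathbf 1_{]0,t]}$.

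It remains to identify $u$ as a solution. Put $g_n:=f_n\,d\lambda/d\nu$ and $g:=f\,d\lambda/d\nu$; since $d\lambda/d\nu\leq 1$ and $L^\infty(I,E;\nu)\subset L^\infty(I,E;\lambda)$ (as $\lambda\leq\nu$), one checks $g_n\to g$ weakly in $L^1(I,E;\nu)$. For each $n$ the solution property gives a $\nu$-null set $N_n$ with $-u_{f_n}'(t)-g_n(t)\in A(t)u_{f_n}(t)$ off $N_n$, so monotonicity of $A(t)$ gives, for every $\gamma\in D(A(t))$,
\[
\langle u_{f_n}'(t)+g_n(t),\,u_{f_n}(t)-\gamma\rangle\leq\langle A^0(t,\gamma),\,\gamma-u_{f_n}(t)\rangle .
\]
Applying Komlos' theorem (or Mazur's lemma) to $(u_{f_n}'+g_n)$ in $L^1(I,E;\nu)$, splitting $u_{f_n}(t)-\gamma=(u(t)-\gamma)+(u_{f_n}(t)-u(t))$ and using the strong convergence $u_{f_n}(t)\to u(t)$ together with the uniform bound $\|u_{f_n}'(t)+g_n(t)\|\leq K+3M$ to annihilate the cross term, I pass to the Cesàro limit off $N:=\bigcup_n N_n$ and a single Komlos-null set to obtain
\[
\langle u'(t)+g(t),\,u(t)-\gamma\rangle\leq\langle A^0(t,\gamma),\,\gamma-u(t)\rangle ,\qquad\forall\gamma\in D(A(t)),\ \nu\text{-a.e. }t .
\]
Since $u(t)\in\overline{D(A(t))}$ as a strong limit of points $u_{f_n}(t)\in D(A(t))$, Lemma \ref{lem2.1} gives $u(t)\in D(A(t))$ and $-u'(t)-g(t)\in A(t)u(t)$; thus $u$ solves the inclusion for the perturbation $f\in S^1_X$, and by uniqueness $u=u_f\in\mathcal X$, which is the required pointwise limit.

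The hard part is this final passage to the limit in the inclusion under merely weak $L^1$-convergence of the perturbations $f_n$: a direct limit is unavailable, which is precisely why the Komlos/Mazur device is needed. The cross terms are controllable only because $(H_3)$ forces strong pointwise convergence of $u_{f_n}(t)$ while the densities stay uniformly bounded; securing the correct $\nu$-almost everywhere (rather than $\lambda$-almost everywhere) statement of the Cesàro convergence, by working throughout in $L^1(I,E;\nu)$, is the delicate bookkeeping point.
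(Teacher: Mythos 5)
Your proposal is correct and follows essentially the same route as the paper's own proof: existence and uniform estimates from Theorem \ref{Theorem 3.2}, weak compactness of $S^1_X$ and of the densities, pointwise strong convergence via $(H_3)$, a Komlos/Mazur passage to the limit in the monotonicity inequality, identification through Lemma \ref{lem2.1}, and uniqueness to conclude $u=u_f$. The only (harmless) variations are that you invoke Helly--Banach plus identification of weak limits where the paper cites Theorem \ref{Theorem 3.1}(iii)--(iv) directly, and that you spell out the Cesàro-limit bookkeeping that the paper delegates to the proof of Theorem \ref{Theorem 3.2}.
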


\proof By Theorem \ref{Theorem 3.2}, it is clear that ${\mathcal X}$
is nonempty. Now,  for each $f \in S ^1_X$, we have for almost every
$t\in I$, $\|f(t)\| \leq M$, so that, by virtue of the estimation
given in Theorem \ref{Theorem 3.2}, the solutions set ${\mathcal X}$
 is equi-BVRC. Namely
$$ u_f(t) = u_0 + \int_ { ]0, t] }  \frac{ du_f} {d\nu} (s)  d\nu(s)\;\;\;\forall  t \in I,$$
with  $\|\frac{ du_f} {d\nu} (t)\| \leq K$    $\nu $ a.e., where $K$
is a nonnegative constant which depends only on the data. Let for
each $n\in \mathbb{N}$, $f_n \in S^1_X$ and let $u_{f_n }$ be the
unique BVRC solution associated with $f_n$ to  the inclusion
 $$(P_n)
\begin{cases}
u_{f_n}(0)=u_0\in D(A(0));\\
 u_{f_n}(t)\in D(A(t))\;\;\,\forall t\in I;\\
   -\displaystyle\frac{du_{f_n}}{d\nu }(t)\in A(t) u_{f_n}(t)+ f_n(t)   \frac{d\lambda} {d\nu}(t)  \;\;\;\nu -a.e.\;t\in I.
\end{cases}
$$
By Theorem \ref{Theorem 3.1}, we know that $S^1_X $ is convex and
weakly compact in $L^1(I, E; \lambda)$, so
 we may assume that  $(f_n)$  weakly converges in  $L^1(I, E; \lambda)$ to some mapping $f  \in S^1_X$.
Since $(u_{f_n})$  is a sequence in  ${\mathcal X}$,  by weak
compactness, we may assume that $(\frac{ du_{f_n} } {d\nu})$  weakly
converges in  $L^1(I, E; \nu)$  to $z \in  L^1(I, E; \nu)$, with
$\|z(t)\|    \leq K$    $\nu$-a.e.,  and since, by $(H_3)$,  for every
$t\in I$, $( u_{f_n }(t))$ is relatively compact, we conclude by
Theorem \ref{Theorem 3.1} that $( u_{f_n })$ converges pointwise strongly to
a BVRC mapping $u$ where $u(t) = u_0+\int_{]0^,t ]} z(s) d\nu(s)$, for all
$t \in I$,   that is $ \frac{d u}{d\nu}  = z$. As $(\frac{ du_{f_n}
} {d\nu})$  weakly converges in $ L^1(I, E; \nu)$ to $\frac{d u
}{d\nu}$, $(\frac{ du_{f_n} } {d\nu}) $ Komlos converges to
$\frac{d u }{d\nu} $. Similarly $(f_n \frac{d\lambda}
{d\nu})$   Komlos converges to $ f\frac{d\lambda} {d\nu}
$. As a consequence, by applying Komlos argument to the inclusion in
$(P_n)$, taking account that $u(t)\in D(A(t))$, we get finally
$$- \frac{ du } {d\nu}(t)  - f(t)  \frac{d\lambda} {d\nu}(t) \in A(t)
u(t)\;\;\;\nu-a.e.\,t\in I$$ with $u(0)=u_0$. By uniqueness of the
solution, we conclude that $u=u_f$. Therefore ${\mathcal X}$ is
sequentially compact with respect to the pointwise convergence on
$I$. \finproof
\begin{thm} \label{Theorem 3.5}
Let for every $t\in I$, $A(t):D(A(t))\subset E\rightrightarrows E$
be a maximal monotone operator satisfying $(H_1)$, $(H_2)$ and
$(H_3)$. Let $F: I\times E\rightrightarrows  E$ be a convex  weakly compact
valued multi-mapping satisfying:
\\
(1) for each $e\in E$, the scalar function  $\delta ^*(e, F(\cdot,
\cdot))$ is  ${\mathcal B}(I)\otimes \mathcal{B}(E)$-measurable;
\\
(2) for  each $e\in E$ and for every $t\in I$, the scalar function
 $\delta ^*(e, F(t, \cdot))$ is upper semicontinuous on $E$;
\\
(3) $F(t, x)\subset  M(1+\|x\|){\overline B}_E, $ for all $(t, x)
\in I\times E$,   where $M$ is a nonnegative constant.
\\
Then the set of BVRC solutions  to the inclusion
$$(P_F)
\begin{cases}
u(0)=u_0 \in D(A(0));\\
 u(t)\in D(A(t))\;\;\;\forall t\in I;\\
\frac{du}{d\nu}\in L^{\infty}(I, E; d\nu);\\
   -\displaystyle\frac{du}{d\nu }(t)\in A(t) u(t)+ F(t, u(t))  \frac{d\lambda} {d\nu}(t)  \;\;\;\nu-a.e.\;t\in
   I,
\end{cases}
$$
is nonempty and sequentially compact with respect to the pointwise
convergence on $I$.
\end{thm}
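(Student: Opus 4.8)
The plan is to combine the uniform a priori bound furnished by the linear growth $(3)$ with the compactness of Lemma \ref{Lemma 3.1} and a Kakutani--Ky Fan fixed point argument, the passage from the set-valued to the single-valued problem being made through a truncation that renders the perturbation globally bounded.

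First I would establish that there is a constant $R$, depending only on the data $\|u_0\|$, $c$, $M$ and $\nu(]0,T])$, such that every BVRC solution $u$ of $(P_F)$ satisfies $\|u(t)\|\le R$ for all $t\in I$. This is the crucial point, and I would obtain it by revisiting Step 1 of the proof of Theorem \ref{Theorem 3.2}: a solution $u$ carries a selection $f(t)\in F(t,u(t))$ with $\|f(t)\|\le M(1+\|u(t)\|)$, so the increment estimate \eqref{3.8} coming from the resolvent contraction (Lemma \ref{lem2.3}) and $(H_2)$ becomes $\|u_{i+1}^n-u_i^n\|\le\big((2c+M)\|u_i^n\|+2(1+c)+M\big)\beta_{i+1}^n$, which is affine in $\|u_i^n\|$ with the growth constant $M$ appearing \emph{additively inside the exponent}. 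Hence Lemma \ref{lem2.5} yields $\|u_i^n\|\le(\|u_0\|+(2(1+c)+M)\nu(]0,T]))\exp((2c+M)\nu(]0,T]))=:R$ uniformly in the mesh. I stress that because $M$ enters the exponent rather than a slope, no smallness of $M$ is needed; passing to the limit gives $\|u(t)\|\le R$.

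With $R$ fixed I would truncate: set $\rho_R(x)=x$ if $\|x\|\le R$ and $\rho_R(x)=Rx/\|x\|$ otherwise, and define $\tilde F(t,x):=F(t,\rho_R(x))$. Then $\tilde F$ is convex weakly compact valued, inherits the scalar measurability $(1)$ and the scalar upper semicontinuity $(2)$ (composition with the continuous $\rho_R$), and is now \emph{globally} bounded: $\tilde F(t,x)\subset X:=M(1+R)\overline{B}_E$ for all $(t,x)$. By Theorem \ref{Theorem 3.2} each $g\in S^1_X$ gives a unique BVRC solution $u_g$, and Lemma \ref{Lemma 3.1} tells me that the correspondence $g\mapsto u_g$ sends weakly convergent sequences of $S^1_X$ to pointwise strongly convergent solutions. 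I then define $\Phi:S^1_X\rightrightarrows S^1_X$ by $\Phi(g)=S^1_{\tilde F(\cdot,u_g(\cdot))}$. Because $\tilde F$ is bounded by $M(1+R)$ this does land in $S^1_X$, which is exactly where the truncation removes the circularity that a constant-ball bound would otherwise create; its values are nonempty (measurable selection, using $(1)$ and that $u_g$ is Borel), convex (since $F$ is convex valued) and weakly compact (Theorem \ref{Theorem 3.1}(i)). A fixed point $g^*\in\Phi(g^*)$ produces $u:=u_{g^*}$ with $g^*(t)\in\tilde F(t,u(t))$ solving the monotone inclusion; applying the a priori bound to this $u$ gives $\|u(t)\|\le R$, so $\rho_R(u(t))=u(t)$, the truncation is inactive and $u$ solves $(P_F)$. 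Existence of the fixed point then follows from the Kakutani--Ky Fan theorem on the convex weakly compact set $S^1_X$, once $\Phi$ is shown to have weakly sequentially closed graph.

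The closed graph property is the main obstacle, and it is also what yields sequential compactness. Suppose $g_n\rightharpoonup g$ in $L^1(I,E;\lambda)$ and $f_n\in\Phi(g_n)$ with $f_n\rightharpoonup f$. By Lemma \ref{Lemma 3.1}, $u_{g_n}(t)\to u_g(t)$ for every $t$, and I must show $f(t)\in\tilde F(t,u_g(t))$ a.e. Fixing $e$ in a countable dense subset of $E$ and any measurable $A\subset I$, weak convergence gives $\int_A\langle e,f\rangle\,d\lambda=\lim_n\int_A\langle e,f_n\rangle\,d\lambda\le\limsup_n\int_A\delta^*(e,\tilde F(t,u_{g_n}(t)))\,d\lambda$, and the reverse Fatou lemma (the integrands are dominated by $M(1+R)\|e\|$) together with the scalar upper semicontinuity $(2)$ and $u_{g_n}(t)\to u_g(t)$ give $\le\int_A\delta^*(e,\tilde F(t,u_g(t)))\,d\lambda$; as $A$ is arbitrary, $\langle e,f(t)\rangle\le\delta^*(e,\tilde F(t,u_g(t)))$ a.e., and intersecting the null sets over the countable dense set yields $f(t)\in\tilde F(t,u_g(t))$ a.e., i.e. the weak closedness of $\Phi$. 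Finally, for the sequential compactness of the solution set I take any sequence of solutions $u_n$ of $(P_F)$; the a priori bound gives $\|u_n\|_\infty\le R$, so their selections $f_n(t)\in F(t,u_n(t))$ lie in $S^1_X$, and Theorem \ref{Theorem 3.1} together with Lemma \ref{Lemma 3.1} let me extract $f_n\rightharpoonup f$ and $u_n=u_{f_n}\to u_f=:u$ pointwise; the same support-function closure shows $f(t)\in F(t,u(t))$ a.e., so $u$ is again a solution, proving that the solution set is sequentially compact for the pointwise convergence on $I$.
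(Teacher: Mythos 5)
Your architecture coincides with the paper's own: solve the single-valued problem (Theorem \ref{Theorem 3.2}), use Lemma \ref{Lemma 3.1} to get pointwise convergence $u_{g_n}\to u_g$ along weakly convergent $(g_n)$, close the graph of the selection map by the support-function/upper-semicontinuity argument, and apply Kakutani--Ky Fan; those parts are fine and match the paper's Step 1 and Step 2. The genuine gap is your a priori bound $R$, on which the truncation entirely rests (both for deactivating it after the fixed point and for the compactness argument). You claim to obtain $R$ by ``revisiting Step 1 of Theorem \ref{Theorem 3.2}'', but in that scheme $f$ is a \emph{fixed function of time}, and the quantity that the constant $M$ controls in \eqref{3.8} is $\int_{t_i^n}^{t_{i+1}^n}\|f(s)\|\,d\lambda(s)$. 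Knowing only $\|f(s)\|\le M(1+\|u(s)\|)$, with $u$ the limit solution, this integral is bounded by $M\big(1+\sup_{s}\|u(s)\|\big)\eta_{i+1}^n$, not by $M(1+\|u_i^n\|)\beta_{i+1}^n$: you cannot substitute the iterate $u_i^n$ for the values $u(s)$ on $]t_i^n,t_{i+1}^n]$, since the proximity of the iterates to the solution is exactly what the ($f$-dependent) convergence proof is supposed to deliver. So the affine recursion ``with $M$ inside the exponent'' does not follow; if you fall back on the crude bound $R_u:=\sup_s\|u(s)\|$, Lemma \ref{lem2.5} only gives $R_u\le\big(\|u_0\|+(2(1+c)+M+MR_u)\,\nu(]0,T])\big)\exp\big(2c\,\nu(]0,T])\big)$, which bounds $R_u$ only under the smallness condition $M\,\nu(]0,T])\exp\big(2c\,\nu(]0,T])\big)<1$ --- precisely the restriction you announce you are avoiding.

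The bound is nevertheless true, and the correct derivation is the continuous one, whose ingredients are in the paper but appear nowhere in your proposal: compare a solution $u$, with selection $f(t)\in F(t,u(t))$ (identically for $\tilde F$, since $\tilde F(t,x)\subset M(1+\|x\|)\overline{B}_E$ as well), with the unperturbed solution $u_o$ of $(P_0)$. Monotonicity of $A(t)$ together with Moreau's differential-measure inequality give $\frac12\|u(t)-u_o(t)\|^2\le\int_0^t\|f(s)\|\,\|u(s)-u_o(s)\|\,ds$; Lemma \ref{lem2.8} upgrades this to $\|u(t)-u_o(t)\|\le\int_0^t\|f(s)\|\,ds\le\int_0^t M\big(1+\alpha_0+\|u(s)-u_o(s)\|\big)\,ds$ with $\alpha_0:=\max_{t}\|u_o(t)\|$, and a Gronwall/ODE comparison yields $\|u(t)\|\le\alpha(t)$, where $\dot\alpha=M(1+\alpha)$, $\alpha(0)=\alpha_0$. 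Taking $R=\alpha(T)$ then repairs both uses of your bound. This estimate is in fact the heart of the paper's Step 1: there it is run for $h\in S:=\{h\in L^\infty(I,E;\lambda):\|h(t)\|\le\dot\alpha(t)\ \lambda\text{-a.e.}\}$ to produce the invariance $\Phi(S)\subset S$ directly, so the time-dependent radius $\dot\alpha(t)$ does the work of your truncation and no separate a priori bound is needed for the existence part.
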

 \proof  \textit{Step 1.} Let $u_o$ be the unique BVRC solution to the
 inclusion
 $$ (P_0)
\begin{cases}
u_o(0)=u_0 \in D(A(0));\\
 u_o(t)\in D(A(t))\;\;\;\forall t\in I;\\
   -\displaystyle\frac{du_o}{d\nu }(t)\in A(t) u_o(t) \;\;\;\nu-a.e.\;t\in
   I,
\end{cases}
$$ and let $\alpha:I\to \mathbb{R}_+$ be the unique absolutely
continuous solution to the ordinary differential equation
$$\dot \alpha(t)=M(1+\alpha(t))\;\;\forall t\in
I\;\;\textmd{with}\;\;\alpha(0)=\alpha_0:=\max_{t\in I}\|u_o(t)\|,$$
that is, $\dot \alpha(t)=M(1+\alpha_0)\exp(M t)$. Since $\dot
\alpha\in L^\infty(I, \mathbb{R}; \lambda)$, the set
 $$ S:= \big\{ h \in  L^\infty(I, E; \lambda) : \;\|h(t)\| \leq \dot\alpha(t)\;\;
 \lambda-a.e.\big\}$$ is convex $\sigma(L^\infty(I, E; \lambda), L^1(I, E;
 \lambda))$-compact, and then it is also $\sigma(L^1(I, E; \lambda), L^\infty(I, E;
 \lambda))$-compact. For any $h\in S$,
let us define
$$\Phi(h) = \Big\{ f \in  L^1(I, E; \lambda): \; f(t) \in  F(t, u_h(t))\;\;  \lambda-a.e.\,t \in I  \Big\},$$
where $ u_h$ is the unique BVRC solution to the inclusion
$$ (P_h)
\begin{cases}
u_h(0)=u_0 \in D(A(0));\\
 u_h(t)\in D(A(t))\;\;\;\forall t\in I;\\
   -\displaystyle\frac{du_h}{d\nu }(t)-h(t)\frac{d\lambda} {d\nu}(t)\in A(t) u_h(t) \;\;\;\nu-a.e.\;t\in
   I.
\end{cases}
$$
In fact, for any $h\in S$, $\Phi(h)$ is the set of $ L^1_E( I,
\mathcal B(I); \lambda)$-selections of the convex weakly compact
valued scalarly $\mathcal B(I)$-measurable mapping  $t \mapsto F(t,
u_h(t))$, by noting that  $u_h$ is BVRC, then $u_h$ is Borel, i.e.,
$(\mathcal B(I), \mathcal{B}(E))$-measurable, hence by $(1)$,   $t
\to \delta^*(e, F(t, u_h(t) ) )$ is $\mathcal B(I)$-measurable, and
then $F(\cdot,u_h(\cdot))$ admits a Borel selection. This shows the
non-emptiness of $\Phi(h)$. Using $(P_0)$ and $(P_h)$, we get by the
monotonicity of $A(t)$
\begin{equation*}\Big\langle \frac {du_h} {d\nu } (t)- \frac {du_o}
{d\nu} (t) +h(t)\frac{ d\lambda} {d\nu} (t) ,
u_h(t)-u_o(t)\Big\rangle\leq 0.\end{equation*} Since $h\in S$ it
follows that
\begin{eqnarray*}\Big\langle \frac {du_h} {d\nu } (t)-  \frac {du_o} {d\nu } (t),
u_h(t)-u_o(t)\Big\rangle \leq  \dot\alpha(t)\frac{ d\lambda} {d\nu}
(t) \|u_h(t)-u_o(t)\|.\end{eqnarray*} On the other hand, we know
that $u_h$ and $u_o$ are BVRC and have the densities $\frac {du_h}
{d\nu}$ and $\frac {du_o} {d\nu}$  relatively to $\nu$, by a result
of Moreau concerning the differential measure \cite{M3},
$\|u_h-u_o\| ^2$ is BVRC and we have
$$d\|u_h-u_o\|  ^2 \leq 2  \Big\langle u_h(\cdot) -u_o(\cdot),  \frac {du_h}  {d\nu}(\cdot)  - \frac {du_o}  {d\nu}(\cdot) \Big\rangle  d\nu,$$
so that  by integrating  on $] 0, t]$    and using the above
estimate we get \begin{eqnarray*} \frac{1}{2}\|u_h(t)-u_o(t)\|  ^2
\leq \int_{0}^t  \dot\alpha(s) \|u_h(s)-u_o(s)\| ds.\end{eqnarray*}
Thanks to Lemma \ref{lem2.8}, it follows that
\begin{eqnarray*} \|u_h(t)-u_o(t)\|
\leq \int_{0}^t  \dot\alpha(s)  ds,\end{eqnarray*} so that for all
$t\in I$, we get
\begin{eqnarray*} \|u_h(t)\|\leq \|u_o(t)\|
+ \int_{0}^t  \dot\alpha(s)  ds\leq \alpha_0 + \int_{0}^t
\dot\alpha(s) ds=\alpha(t).\end{eqnarray*} Whence, for any $h\in S$
and for all $f\in \Phi(h)$, we have by hypothesis $(3)$, for
$\lambda$-a.e $t\in I$,
\begin{eqnarray*} \|f(t)\|\leq M(1+\|u_h(t)\|)\leq M(1+\alpha(t))=\dot \alpha(t),\end{eqnarray*}
that is $\Phi(h)\subset S$, further $\Phi(h)$ is convex. Clearly, if
$h$ is a fixed point of $\Phi$ ($h\in\Phi (h)$), then  $u_h$ is a
BVRC solution of the inclusion under consideration. We show that
 $\Phi : S \rightrightarrows S$ is a convex
$\sigma(L^1(I, E; \lambda), L^\infty(I, E; \lambda))$-compact valued
 upper semicontinuous multi-mapping.
By  weak  compactness, it is enough to show that the graph of $\Phi$
is sequentially weakly  compact. Let $(h_n)\subset S$ a sequence,
which $\sigma(L^1(I, E; \lambda), L^\infty(I, E;
\lambda))$-converges to $ \bar{h}\in S$, and let $(f_n)  \subset S$
such that $f_n\in \Phi(h_n)$ and
 $(f_n)$  $\sigma(L^1(I, E; \lambda), L^\infty(I, E;
d\lambda))$-converges to $ \bar{f}\in S$. We need to show that
$\bar{f} \in \Phi(\bar{h})$. By virtue  of Lemma 3.1, we know that
the set ${\mathcal X} := \{u_h : h\in S \}$ of solutions of $(P_h)$
is sequentially compact with respect to the pointwise convergence on
$I$.
 Hence  $(u_{h_n}) $ converges pointwise  to
$u_{\bar{h}} \in \mathcal X$. Since for a.e. $t\in I$, $f_n(t) \in
F(t, u_{h_n}(t))$, the inequality
$$\big\langle 1_L(t) x,  f_n(t) \big\rangle \leq \delta^*\big(1_L(t) x,  F(t, u_{h_n}(t))\big),$$
holds for almost every  $t\in I$, for each $L\in {\mathcal L}(I)$
and for each $x \in E$. By integrating, we get
$$\int_L  \big\langle x,   f_n(t) \big\rangle dt \leq  \int_L  \delta^*\big(x,  F(t, u_{h_n}(t))\big) dt.$$
By the weak convergence of $(f_n)$ and hypothesis $(2)$, it follows
that \begin{eqnarray*}\int_L \big\langle x,   \bar{f}(t) \big\rangle
dt&=&\lim_{n\to\infty}  \int_L   \big\langle x, f_n(t) \big\rangle
dt \leq
\limsup_{n\to\infty} \int_L \delta^*\big(x,  F(t, u_{h_n}(t))\big) dt\\
&\leq&  \int_L \limsup_{n\to\infty}  \delta^*\big(x,  F(t,
u_{h_n}(t))\big) dt \leq  \int_L  \delta^*\big(x,  F(t,
u_{\bar{h}}(t))\big) dt.\end{eqnarray*} Whence we get
$$\int_L   \big\langle x,   \bar{f}(t) \big\rangle dt \leq  \int_L  \delta^*\big(x,  F(t, u_{\bar{h}}(t))\big) dt$$
for every $L \in  {\mathcal L}(I)$. Consequently $\big\langle x,
\bar{f} (t)\big\rangle \leq  \delta^*\big(x,  F(t,
u_{\bar{h}}(t))\big)$ $\lambda$-a.e. From (\cite{CV}, Prop. III.35),
we get $\bar{f}(t) \in F(t, u_{\bar{h}}(t))$ $\lambda$-a.e.
\\Applying Kakutani-Ky Fan  fixed point theorem to the convex weakly
compact valued  upper semicontinuous multi-mapping  $\Phi$ now shows
that $\Phi $ admits a fixed point, $h \in \Phi(h)$, thus proving the
existence of at least one BVRC solution  to our inclusion $(P_F)$.

\textit{Step 2.}   Compactness follows easily from the above
arguments and the pointwise   compactness   of $\mathcal X$ given in
Lemma \ref{Lemma 3.1}. \finproof
\\

Let us mention a useful result, which leads us to several
applications.
\begin{coro} \label{Corollaire 3.1}
Let for every $t\in I$, $A(t):D(A(t))\subset E\rightrightarrows E$
be a maximal monotone operator satisfying $(H_1)$, $(H_2)$ and
$(H_3)$. Let $f : I\times E  \to E$  satisfying:
\\
(i)   $f(\cdot, x) $ is $\mathcal{B}(I)$-measurable on $I$,    for
all $x \in E$.
\\
(ii)  $\|f(t, x)-f(t, y)\| \leq  M\|x-y\|$     for all  $ (t, x , y)
\in I\times E \times E$.
\\
(iii) $\|f(t, x)\|  \leq M(1+\|x\|) $ for all $(t, x) \in I \times
E$, for some nonnegative constant $M$.\\ Assume further that   there
is $ \beta \in ]0, 1[$  such that $\forall t \in I$, $0\leq  2M
\frac{ d\lambda} {d\nu} (t)d\nu  (\{t\}) \leq \beta < 1$. Then there
is a unique BVRC solution  to the problem
$$
\begin{cases}
u(0)=u_0 \in D(A(0));\\
 u(t)\in D(A(t))\;\;\;\forall t\in I;\\
\frac{du}{d\nu}\in L^{\infty}(I, E; d\nu);\\
   -\displaystyle\frac{du}{d\nu }(t)\in A(t) u(t)+ f(t, u(t))  \frac{d\lambda} {d\nu}(t)  \;\;\;\nu-a.e.\;t\in
   I.
\end{cases}
$$
\end{coro}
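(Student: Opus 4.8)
The plan is to obtain existence by specializing Theorem \ref{Theorem 3.5} to the singleton‑valued perturbation, and then to establish uniqueness by a monotonicity estimate combined with the measure‑theoretic Gronwall Lemma \ref{lem2.6}. Uniqueness is the genuinely new content here, since Theorem \ref{Theorem 3.5} only delivers compactness of the solution set, not a single solution.

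For existence I would set $F(t,x):=\{f(t,x)\}$ and verify that this convex (indeed compact) valued mapping satisfies hypotheses (1)--(3) of Theorem \ref{Theorem 3.5}. Since $\delta^*(e,F(t,x))=\langle e,f(t,x)\rangle$, measurability in $t$ from (i) together with the Lipschitz, hence continuous, dependence in $x$ from (ii) makes $f$ a Carath\'eodory map, so that $(t,x)\mapsto\langle e,f(t,x)\rangle$ is $\mathcal B(I)\otimes\mathcal B(E)$‑measurable, giving (1); the map $x\mapsto\langle e,f(t,x)\rangle$ is continuous, a fortiori upper semicontinuous, giving (2); and the growth bound (iii) yields $F(t,x)\subset M(1+\|x\|)\overline B_E$, i.e. (3). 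Theorem \ref{Theorem 3.5} then produces at least one BVRC solution $u$ with $\frac{du}{d\nu}\in L^\infty(I,E;d\nu)$.

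For uniqueness, suppose $u_1$ and $u_2$ are two such solutions with $u_1(0)=u_2(0)=u_0$. Writing the two inclusions and using the monotonicity of $A(t)$, I would obtain, for $\nu$‑a.e. $t$,
$$\Big\langle \tfrac{du_1}{d\nu}(t)-\tfrac{du_2}{d\nu}(t),\, u_1(t)-u_2(t)\Big\rangle \leq -\Big\langle \big(f(t,u_1(t))-f(t,u_2(t))\big)\tfrac{d\lambda}{d\nu}(t),\, u_1(t)-u_2(t)\Big\rangle,$$
and then by Cauchy--Schwarz and the Lipschitz condition (ii),
$$\Big\langle \tfrac{du_1}{d\nu}(t)-\tfrac{du_2}{d\nu}(t),\, u_1(t)-u_2(t)\Big\rangle \leq M\,\tfrac{d\lambda}{d\nu}(t)\,\|u_1(t)-u_2(t)\|^2.$$
Invoking Moreau's differential‑measure inequality for $\|u_1-u_2\|^2$ (exactly as in the proof of Theorem \ref{Theorem 3.5}) and integrating over $]0,t]$, with $u_1(0)=u_2(0)$, gives
$$\|u_1(t)-u_2(t)\|^2 \leq \int_{]0,t]} 2M\,\tfrac{d\lambda}{d\nu}(s)\,\|u_1(s)-u_2(s)\|^2\, d\nu(s).$$

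I would finish by applying Lemma \ref{lem2.6} with $\mu=\nu$, $\alpha=0$, $\varphi(t)=\|u_1(t)-u_2(t)\|^2$ (bounded, hence in $L^\infty(I,\mathbb R;\nu)$), and $g(t)=2M\frac{d\lambda}{d\nu}(t)$ (bounded by $2M$, hence $\nu$‑integrable). The standing assumption $0\leq 2M\frac{d\lambda}{d\nu}(t)\,\nu(\{t\})\leq\beta<1$ is precisely the requirement $0\leq\nu(\{t\})g(t)\leq\beta<1$ of Lemma \ref{lem2.6}, so the lemma yields $\varphi\equiv 0$ and hence $u_1=u_2$. The main obstacle is exactly this uniqueness step: the classical Gronwall lemma fails because $\nu$ carries atoms, and it is to control the contribution of these atoms that the hypothesis on $2M\frac{d\lambda}{d\nu}(t)\nu(\{t\})$ is imposed; together with Moreau's chain rule for the differential measure of $\|u_1-u_2\|^2$, this is the crux of the argument.
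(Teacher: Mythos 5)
Your proposal is correct and follows essentially the same route as the paper: existence by specializing Theorem \ref{Theorem 3.5} to the singleton-valued perturbation $F(t,x)=\{f(t,x)\}$, and uniqueness via monotonicity of $A(t)$, the Lipschitz condition, Moreau's differential-measure inequality for $\|u_1-u_2\|^2$, and the atomic-measure Gronwall Lemma \ref{lem2.6} under the hypothesis $0\leq 2M\frac{d\lambda}{d\nu}(t)\,\nu(\{t\})\leq\beta<1$. Your verification of hypotheses (1)--(3) of Theorem \ref{Theorem 3.5} is merely spelled out in more detail than the paper's one-line appeal to that theorem; the substance is identical.
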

\proof  Existence follows from Theorem \ref{Theorem 3.5}. We need
only to prove the uniqueness. \\Suppose that there are two BVRC
solutions $u$ and $v$ to the problem under consideration, that is
$$-\frac{du}  {d\nu}  (t) - f(t, u(t))   \frac{ d\lambda} {d\nu} (t) \in   A(t) u(t),$$
$$-\frac{dv}  {d\nu}  (t) - f(t, v(t))  \frac{ d\lambda} {d\nu} (t) \in   A(t)  v(t).$$
By the monotonicity of $A(t)$ we get \begin{equation*}\Big\langle
\frac {dv} {d\nu } (t)-  \frac {du} {d\nu} (t) +\frac{ d\lambda}
{d\nu} (t) f(t,v(t))-\frac{ d\lambda} {d\nu} (t) f(t,u (t)),
v(t)-u(t)\Big\rangle\leq 0.\end{equation*} By hypothesis $(ii)$
\begin{eqnarray*}\Big\langle \frac {dv} {d\nu } (t)-  \frac {du} {d\nu } (t),
v(t)-u(t)\Big\rangle&\leq& \Big\langle \frac{ d\lambda} {d\nu} (t)
f(t,u(t))- \frac{ d\lambda} {d\nu} (t) f(t,v(t)),
v(t)-u(t)\Big\rangle\\&\leq&  M\frac{ d\lambda} {d\nu} (t)
\|v(t)-u(t)\|^2.\end{eqnarray*} On the other hand, we know that $u$
and $v$ are BVRC and have the densities $\frac {du} {d\nu}$   and
$\frac {dv} {d\nu}$  relatively to $\nu$, by a result of Moreau
concerning the differential measure \cite{M3}, $\|v-u\|  ^2$ is BVRC
and we have
$$d\|v-u\|  ^2 \leq 2  \Big\langle v(\cdot) -u(\cdot),  \frac {dv}  {d\nu}(\cdot)  - \frac {du}  {d\nu}(\cdot) \Big\rangle  d\nu,$$
so that  by integrating  on $] 0, t]$    and using the above
estimate we get \begin{eqnarray*} \|v(t)-u(t)\|  ^2 &=& \int_{]0, t]
} \frac{d\|u-v\|^2}{d\nu} d\nu(s)  \leq \int_{]0, t] } 2 \big\langle
v(s) -u(s),
\frac {dv} {d\nu}(s)  - \frac {du}  {d\nu}(s) \big\rangle d\nu(s)\\
&\leq&  \int_{]0, t] }  2M\frac{ d\lambda} {d\nu} (s)
\|v(s)-u(s)\|^2 d\nu(s).\end{eqnarray*} According to the assumption
$0\leq  2M \frac{ d\lambda} {d\nu} (t)d\nu  (\{t\}) \leq \beta < 1$
and using Lemma \ref{lem2.6}, we deduce from the last inequality
that $ u=v$ in $I$. This completes the proof. \finproof

\begin{rem} Actually Corollary \ref{Corollaire 3.1} is an  extension   of Theorem 4.1
in Adly et al (\cite{adly-hadd-thib}) dealing with the BVRC solution
for  convex  sweeping process.\end{rem}

\vskip2mm An inspection of the proofs of our preceding theorems,
allows us to state the following existence result with a fairly
general perturbation taking the form $f+F$.

\begin{thm}\label{th4.5}
Let for every $t\in I$,  $A(t): D(A(t))\subset E \rightrightarrows
E$ be a maximal monotone operator satisfying $(H_1)$,  $(H_2)$ and
$(H_3)$. Let $f : I\times E \to E$ be such that for every $x \in E$
the mapping $f(\cdot, x)$ is $\mathcal{B}(I)$-measurable  on $I$ and
such that
\\
(i)  $\|f(t, x)\| \leq m, \hskip 3pt   \forall (t,  x) \in I\times
E$, for some nonnegative constant $m$;
\\
(ii) $\|f(t, x)-f(t, y)\| \leq m \|x-y\|,  \hskip 3pt   \forall (t,
x, y) \in I\times E\times E$.
\\
Let $F:I\times E\rightrightarrows E$ be a convex   compact valued
multi-mapping  satisfying:
    \\
    $(j)$ for each $e\in E$, the scalar function
 $\delta ^*(e, F(\cdot, \cdot))$ is $\mathcal{B}(I)\otimes\mathcal{B}(E)$-measurable;
    \\
    $(jj)$  for each $e\in E$ and for every $t\in I$, the scalar function
 $\delta ^*(e, F(t, \cdot))$ is upper semicontinuous on $E$;
    \\
    $(jjj)$  $F(t,x)\subset   M (1+ \|x\|){\overline B}_E, \hskip 3pt   \forall (t,  x) \in
I\times E$, for some nonnegative constant $M$.
    \\
    Assume further that   there is  $ \beta \in ]0, 1[$  such that
    $\forall t \in I$, $0\leq  2m  \frac{ d\lambda} {d\nu} (t)d\nu  (\{t\}) \leq \beta < 1$.
Then for $u_0\in D(A(0))$,  there is a BVRC mapping $u : I
\rightarrow H$  satisfying
$$(P_{f, F})
\begin{cases}
u(0)=u_0 \in D(A(0));\\
 u(t)\in D(A(t))\;\;\;\forall t\in I;\\
\frac{du}{d\nu}\in L^{\infty}(I, E; d\nu);\\
   -\displaystyle\frac{du}{d\nu }(t)\in A(t) u(t)+ \big(f(t, u(t))+F(t, u(t))\big)  \frac{d\lambda} {d\nu}(t)  \;\;\;\nu-a.e.\;t\in
   I.
\end{cases}
$$
\end{thm}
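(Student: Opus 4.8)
The plan is to merge the set-valued fixed-point scheme of Theorem \ref{Theorem 3.5} with the Lipschitz apparatus of Corollary \ref{Corollaire 3.1}: the single-valued part $f$ is kept \emph{inside} an inner inclusion that is uniquely solvable for each frozen selection of $F$, while the multivalued part $F$ is handled by an \emph{outer} set-valued map to which Kakutani--Ky Fan is applied. First I would set up the a priori bound. Let $u_o$ solve the unperturbed problem $(P_0)$, put $\alpha_0 := \max_{t\in I}\|u_o(t)\|$, and let $\psi$ solve the scalar ODE $\dot\psi(t) = M\big(m+\psi(t)\big)$ with $\psi(0)=M(1+\alpha_0)$, so that $\psi \in L^\infty(I,\mathbb{R};\lambda)$. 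Define
$$ S := \big\{ h \in L^\infty(I,E;\lambda):\ \|h(t)\|\le \psi(t)\ \ \lambda\text{-a.e.}\big\}, $$
which is convex and $\sigma(L^1(I,E;\lambda),L^\infty(I,E;\lambda))$-compact. For each $h\in S$ the mapping $(t,x)\mapsto f(t,x)+h(t)$ is $\mathcal B(I)$-measurable in $t$, $m$-Lipschitz in $x$ by $(ii)$, and bounded by $(i)$ together with $h\in S$; since the smallness condition $2m\frac{d\lambda}{d\nu}(t)\,d\nu(\{t\})\le\beta<1$ is precisely the one required in Corollary \ref{Corollaire 3.1} with Lipschitz constant $m$, that corollary produces a unique BVRC solution $u_h$ of the inner problem
$$ (P_h)\qquad -\frac{du_h}{d\nu}(t)-h(t)\frac{d\lambda}{d\nu}(t)\in A(t)u_h(t)+f(t,u_h(t))\frac{d\lambda}{d\nu}(t),\quad u_h(0)=u_0. $$

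Next I would close the estimate. Comparing $u_h$ with $u_o$ via the monotonicity of $A(t)$ and Moreau's differential-measure rule $d\|u_h-u_o\|^2\le 2\langle u_h-u_o,\frac{du_h}{d\nu}-\frac{du_o}{d\nu}\rangle\,d\nu$ \cite{M3}, and using $\|f\|\le m$ and $\|h\|\le\psi$, Lemma \ref{lem2.8} gives
$$ \|u_h(t)\|\le \alpha_0+\int_0^t\big(m+\psi(s)\big)\,ds=:\beta(t), $$
and the choice of $\psi$ yields $M(1+\beta(t))\le\psi(t)$ (in fact equality). Defining the outer map $\Phi(h):=\{g\in L^1(I,E;\lambda):\ g(t)\in F(t,u_h(t))\ \lambda\text{-a.e.}\}$, hypothesis $(jjj)$ then forces $\Phi(h)\subset S$; moreover $\Phi(h)$ is nonempty, by $(j)$ and the Borel character of the BVRC map $u_h$ it admits a measurable selection exactly as in Theorem \ref{Theorem 3.5}, and it is convex and $\sigma(L^1,L^\infty)$-compact by Theorem \ref{Theorem 3.1}. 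A fixed point $h\in\Phi(h)$ will give a solution, since then $f(t,u_h(t))+h(t)\in f(t,u_h(t))+F(t,u_h(t))$.

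The heart of the proof is the upper semicontinuity of $\Phi:S\rightrightarrows S$, for which it suffices that its graph be sequentially weakly closed, and the crucial step there is the pointwise continuity of $h\mapsto u_h$. Given $h_n\rightharpoonup\bar h$ in $L^1$, I would note that each $u_{h_n}$ also solves the $F$-free inclusion of Lemma \ref{Lemma 3.1} with the bounded selection $g_n(t):=f(t,u_{h_n}(t))+h_n(t)$, so by $(H_3)$ the set $\{u_{h_n}(t)\}$ is relatively compact for each $t$ and the sequence is equi-BVRC with uniformly bounded density; extracting by Helly's theorem a subsequence with $u_{h_n}\to v$ pointwise and $\frac{du_{h_n}}{d\nu}\rightharpoonup z$ weakly, the Lipschitz term converges strongly, $f(\cdot,u_{h_n}(\cdot))\to f(\cdot,v(\cdot))$ in $L^1(\lambda)$ by dominated convergence, while $h_n\rightharpoonup\bar h$. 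Passing to the limit in $(P_{h_n})$ through the Komlos--Mazur argument together with Lemmas \ref{lem2.1}--\ref{lem2.2}, exactly as in Step 3 of Theorem \ref{Theorem 3.2}, shows that $v$ solves $(P_{\bar h})$, whence $v=u_{\bar h}$ by the uniqueness in Corollary \ref{Corollaire 3.1}; as the limit does not depend on the subsequence, $u_{h_n}\to u_{\bar h}$ pointwise. The remaining closedness, that $g_n\in\Phi(h_n)$ with $g_n\rightharpoonup\bar g$ imply $\bar g\in\Phi(\bar h)$, then follows verbatim from the support-function computation in Theorem \ref{Theorem 3.5}, using $(jj)$ and (\cite{CV}, Prop.~III.35). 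Finally, Kakutani--Ky Fan applied to the convex weakly compact valued upper semicontinuous map $\Phi$ yields $h\in\Phi(h)$, so $u:=u_h$ is the desired BVRC solution of $(P_{f,F})$. The main obstacle is exactly this pointwise continuity $h\mapsto u_h$: one must pass to the limit in a time-dependent maximal-monotone inclusion carrying simultaneously a nonlinear state-dependent term and a merely weakly convergent forcing, and it is the strong pointwise compactness furnished by $(H_3)$ together with the uniqueness of Corollary \ref{Corollaire 3.1} that forces the subsequential limits to coincide.
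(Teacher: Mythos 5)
Your proposal is correct and follows essentially the same route as the paper's proof: freeze a selection $h$ of $F$, solve the inner problem with the state-dependent Lipschitz term $f$ via Corollary \ref{Corollaire 3.1}, derive the a priori bound by comparing with the solution of $(P_0)$ through monotonicity, Moreau's differential-measure rule and Lemma \ref{lem2.8}, show $\Phi(h)\subset S$, establish weak sequential closedness of the graph using the pointwise compactness of $\{u_h: h\in S\}$ (Helly, Komlos, Lemmas \ref{lem2.1}--\ref{lem2.2}, uniqueness) together with the support-function argument of Theorem \ref{Theorem 3.5}, and conclude by Kakutani--Ky Fan. The only deviation is cosmetic: you bound $S$ by the solution $\psi$ of $\dot\psi=M(m+\psi)$, $\psi(0)=M(1+\alpha_0)$, whereas the paper bounds $S$ by $\dot\alpha$ with $\dot\alpha=M(1+\alpha+mT)$, $\alpha(0)=\alpha_0$; both yield the self-consistency $\Phi(h)\subset S$.
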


\proof  \textit{Step 1.} We proceed   as in the proof of  Theorem
\ref{Theorem 3.5}. Let $u_o$ be the unique BVRC solution of problem
$(P_0)$.  Let $\alpha: I\to \mathbb{R}_+$ be the unique absolutely
continuous solution of the ordinary differential equation
$$\dot{\alpha}(t)=M( 1+\alpha(t)+mT)   \;\;\forall t\in I\;\;\mathrm{with}\;\;\alpha(0)=\alpha_0:=\sup_{t \in I} \|u_o(t)\|.
$$
Since $\dot \alpha   \in L^\infty(I, \mathbb{R}; \lambda) $, the set
$$
S:=\{h\in L^\infty(I, E; \lambda):\;\|h(t)\|\leq  \dot \alpha(t)
\;\mathrm{a.e.}\},
$$  is clearly convex $\sigma(L^1(I, E; \lambda),L^\infty(I, E; \lambda))$-compact.
For any $h\in S$, let us define
$$\Phi(h) = \bigg\{ \psi \in  L^1(I, E;\lambda) : \; \psi(t) \in  F(t, u_h(t))\;  \; \lambda-a.e.   \bigg\} ,$$
where $ u_h$ is the unique BVRC solution to the inclusion
$$
(P_{f,h})
\begin{cases}
u_h(0)=u_0 \in D(A(0));\\
 u_h(t)\in D(A(t))\;\;\;\forall t\in I;\\
   -\displaystyle\frac{du_h}{d\nu }(t)\in A(t) u_h(t)+ \big(f(t, u_h(t))+h(t)\big)  \frac{d\lambda} {d\nu}(t)  \;\;\;\nu-a.e.\;t\in
   I.
\end{cases}
$$
The existence and uniqueness of such a solution is granted by
Corollary \ref{Corollaire 3.1}. In fact, to see that, let us set for
any  $h \in  S$, $g(t,x)=f(t,x)+h(t)$ for all $(t,x)\in I\times E$,
then $g$ satisfies:
$$
\|g(t,x)\|\leq m+\dot \alpha(t)\leq m+\|\dot\alpha\|_\infty\;\;\;
\forall (t, x) \in I\times E,
$$
and
$$
\|g(t, x) -g(t, y)\|  \leq m \|x-y\|\;\;\;   \forall (t,  x, y) \in
I\times E\times E.$$ Furthermore, by the arguments given in the
proof of the pointwise compactness of the set $\mathcal{X}$ given in
Lemma \ref{Lemma 3.1}, it is not difficult to show that the set $
\widetilde{\mathcal{X}}=\{u_h:\;h\in S\}$, where $u_h$ is the unique
BVRC solution to problem $(P_{f,h})$, is sequentially compact with
respect
to the pointwise convergence on I.\\
Now,  using $(P_0)$, $(P_{f,h})$, the monotonicity of $A(t)$,
Moreau's inequality for BVRC functions
  and the arguments of the proof of Theorem \ref{Theorem 3.5}, for all
$t\in I$, one has the estimation
$$
\frac{1}{2}\|u_h(t)- u_o(t)\|^2\leq\int_{0}^t \|f(s, u_h(s))+
h(s)\|\|u_h(s)-u_o(s)\|ds.
$$
Thanks to Lemma \ref{lem2.8}, it follows that
$$
\|u_h(t)-u_o(t)\|\leq  \int_{0}^t\| f(s, u_h(s))+h(s)\|ds \leq
\int_{0}^t (m+\dot\alpha(s))ds,$$ so that for all $t\in I$, one gets
\begin{equation}\label{3.22}\|u_h(t)\|  \leq  \alpha_0  + \int_0 ^t  (\dot \alpha(s)  + m
)ds = \alpha(t)  +mt   \leq   \alpha(t) +mT.\end{equation}
Consequently, for any $h\in S$ and for all $\psi\in \Phi(h)$, we
have by hypothesis $(jjj)$ and \eqref{3.22},
$$\|\psi(t)\|\leq M (1 + \|u_h(t)\|)   \leq M( 1+ \alpha(t) + mT)= \dot \alpha(t)\;\;\;\lambda-a.e,$$
that is $\Phi(h) \subset  S$, and by hypothesis $(j)$, it is clear
that for any $h\in S$, $\Phi(h)$ is nonempty, further it is a convex
set.  In fact, $\Phi(h)$ is the set of $ L_E^1(I,
\mathcal{B}(I);\lambda)$-selections of the convex weakly compact
valued measurable multi-mapping  $t \mapsto F(t, u_h(t))$. Clearly,
if $h$ is a fixed point of  $\Phi$ ($h\in\Phi (h)$), then $u_h$ is a
BVRC solution to the inclusion $(P_{f, F})$. So that, we finish our
proof as in Theorem \ref{Theorem 3.5}, by using the same arguments
and the pointwise compactness of the set $\widetilde{\mathcal{X}}$.

\finproof

\section {Applications}

\subsection{Second order BVRC evolution inclusion}
\begin{thm}\label{th4.1}
Let for every $t\in I$, $A(t):D(A(t))\subset E\rightrightarrows E$
be a maximal monotone operator satisfying $(H_1)$, $(H_2)$ and
\\
$(H_3) ^{'}$ $D(A(t)) \subset X(t) \subset \gamma (t) \overline B_E$
for all $t \in I$, where $X : I \rightrightarrows E$ is a convex
compact valued Lebesgue-measurable  multi-mapping and $\gamma: I \to
\mathbb{R}$ is a nonnegative $L^1(I, \mathbb{R} ;
\lambda)$-integrable function.
\\
 Let $f : I\times
E\times E \to E$ be such that for every $x, y \in E$, the mapping
$f(\cdot, x, y)$ is $\mathcal{B}(I)$-measurable and for every $t\in
I$, the mapping $f(t, \cdot, \cdot) $ is continuous on $E\times E$
and satisfying (i)  $\|f(t, x, y)\| \leq M$    $\forall (t,  x,
y)\in I\times E\times E$,
\\
(ii) $\|f(t, z, x)-f(t, z, y)\| \leq M \|x-y\|$  $\forall (t, z, x,
y) \in I\times E\times E\times E$, for some nonnegative constant
$M$. \\Assume further that   there is  $ \beta \in ]0, 1[$  such
that $\forall t \in I$, $0\leq  2M  \frac{ d\lambda} {d\nu} (t)d\nu
(\{t\}) \leq \beta < 1$. Then for $x_0 \in E$, $u_0\in D(A(0))$
there is an absolutely continuous mapping $x: I \rightarrow  E$  and
a BVRC mapping $u : I  \rightarrow E$  with density $\frac {du}
{d\nu }$ with respect to $\nu$   satisfying
$$
\begin{cases}
x(t)=x_0+\displaystyle\int_0^ t  u(s) ds\;\;\;\forall t\in I;\\
u(0)=u_0;\\
 u(t)\in D(A(t))\;\;\;\forall t\in I;\\
\frac{du}{d\nu}\in L^{\infty}(I, E; d\nu);\\
   -\displaystyle\frac{du}{d\nu }(t)\in A(t) u(t)+ f(t, x(t),u(t))  \frac{d\lambda} {d\nu}(t)  \;\;\;\nu-a.e.\,t\in
   I.
\end{cases}
$$
\end{thm}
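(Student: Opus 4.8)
The plan is to prove Theorem~\ref{th4.1} by a fixed-point argument on the ``position'' variable $x$, reducing the second-order problem to the first-order framework already developed in Theorem~\ref{Theorem 3.5} and Corollary~\ref{Corollaire 3.1}. The coupling here is that the perturbation $f(t,x(t),u(t))$ depends both on $x$ (an absolutely continuous primitive of $u$) and on $u$ itself, so the idea is to freeze the dependence on $x$ and solve for $u$, then update $x$ via integration, and finally invoke a fixed-point theorem. First I would set up the space of admissible velocities: because $u(t)\in D(A(t))\subset X(t)\subset\gamma(t)\overline B_E$, the solution $u$ is forced to take values in the ball-compact, $L^1$-controlled set $X(t)$. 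This is exactly the setting of Theorem~\ref{Theorem 3.1} and Lemma~\ref{Lemma 3.1}, so the set of candidate velocities $S^1_X$ (or a suitable bounded subset of $L^1(I,E;\lambda)$) is convex and weakly compact, and the associated primitives $x_h(t)=x_0+\int_0^t h(s)\,ds$ form an equicontinuous, relatively compact family of absolutely continuous maps.

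Concretely, I would define for each admissible velocity $h\in S^1_X$ the primitive $x_h(t)=x_0+\int_0^t h(s)\,ds$, and then feed the single-valued, $x_h$-dependent perturbation $g_h(t,u):=f(t,x_h(t),u)$ into Corollary~\ref{Corollaire 3.1}. The hypotheses on $f$ guarantee that $g_h$ inherits exactly the structure needed: by $(i)$ it is bounded by $M$, by $(ii)$ it is $M$-Lipschitz in the last variable, and Borel-measurability in $t$ follows from the $\mathcal B(I)$-measurability of $f(\cdot,x,y)$ together with the continuity of $x_h$; the smallness condition $0\le 2M\frac{d\lambda}{d\nu}(t)\,d\nu(\{t\})\le\beta<1$ is assumed precisely so that Corollary~\ref{Corollaire 3.1} applies. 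Thus for each $h$ there is a \emph{unique} BVRC solution $u_h$ to the frozen inclusion
$$-\frac{du_h}{d\nu}(t)\in A(t)u_h(t)+f(t,x_h(t),u_h(t))\frac{d\lambda}{d\nu}(t),$$
with the uniform estimate $\|\frac{du_h}{d\nu}\|_\infty\le K$. I would then define the map $\Psi(h):=u_h$ and seek a fixed point $h=u_h$, which yields the desired coupled solution with $x=x_h$.

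To run a fixed-point theorem I would view $\Psi$ as a single-valued map on the convex weakly compact set of admissible velocities (or, more robustly, take $h$ ranging over $S^1_X$ and show $\Psi$ maps into $S^1_X$, since $u_h(t)\in D(A(t))\subset X(t)$). The main work, and the step I expect to be the principal obstacle, is proving \emph{continuity/closedness} of $\Psi$ in the appropriate weak topologies so that Schauder--Tychonoff or Kakutani--Ky~Fan applies. The mechanism should mirror the compactness argument in Lemma~\ref{Lemma 3.1}: if $h_n\to h$ weakly in $L^1(I,E;\lambda)$, then the primitives $x_{h_n}$ converge uniformly (by equicontinuity and Arzel\`a--Ascoli, using ball-compactness of $X$), so $x_{h_n}(t)\to x_h(t)$ for every $t$; combined with the continuity of $f(t,\cdot,\cdot)$ this forces $f(t,x_{h_n}(t),u_{h_n}(t))$ to converge to $f(t,x_h(t),u_h(t))$ once one knows $u_{h_n}\to u_h$ pointwise. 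The delicate point is passing to the limit in the monotone inclusion while simultaneously updating \emph{two} coupled arguments of $f$; I would handle this exactly as in Theorem~\ref{Theorem 3.5}, extracting a subsequence with $(\frac{du_{h_n}}{d\nu})$ weakly convergent (hence Koml\'os convergent) in $L^1(I,E;\nu)$, using $(H_3)'$ and Theorem~\ref{Theorem 3.1}(v) to get pointwise strong convergence of $u_{h_n}$, and then invoking Lemmas~\ref{lem2.1}, \ref{lem2.2} and \ref{lem2.4} to recover the limit inclusion. Uniqueness of $u_h$ for each fixed $h$ (from Corollary~\ref{Corollaire 3.1}) makes $\Psi$ genuinely single-valued, so once closedness of its graph is established the Schauder--Tychonoff fixed-point theorem on the convex weakly compact velocity set delivers a fixed point $h$, and setting $u:=u_h$, $x:=x_h$ completes the proof.
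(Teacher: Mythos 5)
Your proposal is correct and takes essentially the same approach as the paper's proof: freeze the $x$-dependence, solve the frozen inclusion by Corollary~\ref{Corollaire 3.1}, and close the loop with a Schauder-type fixed point, proving continuity of the solution map by exactly the paper's machinery (equi-BV estimates, Theorem~\ref{Theorem 3.1}/Helly compactness, Koml\'os convergence, Lemmas~\ref{lem2.1}, \ref{lem2.2}, \ref{lem2.4}, then uniqueness). The only difference is bookkeeping: you take the fixed point on velocities $h\in S^1_X$ with the weak $L^1$ topology (seeking $h=u_h$), whereas the paper takes it on positions $h\in\mathcal X\subset\mathcal{C}(I,E)$ with the sup norm (seeking $h=x_0+\int_0^{\cdot}u_h(s)\,ds$); the two are equivalent here, since weak $L^1$ convergence of velocities in $S^1_X$ upgrades to uniform convergence of their primitives by the compactness of the values $X(t)$.
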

 \proof
Let
 $$\mathcal X := \big\{ u_\psi\in \mathcal{C}(I, E): \;  u_\psi(t) =x_0+  \int_0 ^t  \psi(s) ds,\; t \in I, \;\psi \in S ^1_X\big\}. $$
Then  $\mathcal X$ is a convex compact subset of $\mathcal{C}(I, E)$
using the compactness of the convex compact valued integral  $\int_0
^t X(s) ds$ (cf \cite{Cas2}). For any $h \in \mathcal X$, there is a
unique BVRC solution to
$$
\begin{cases}
u_h(0)=u_0;\\
 u_h(t)\in D(A(t))\;\;\;\forall t\in I;\\
   -\displaystyle\frac{du_h}{d\nu }(t)\in A(t) u_h(t)+ f(t,h(t),u_h(t))  \frac{d\lambda} {d\nu}(t)  \;\;\;\nu-a.e.\,t\in
   I.
\end{cases}
$$
with $u_h(t) = u_0 + \int_{]0, t ]}   \frac{  du_h}{d\nu}(s)
d\nu(s)$ for all $t \in I$ and      $\|\frac{  du_h}{d\nu}(t) \|
\leq  K$ $\nu$-a.e. Existence and uniqueness of such a solution is
ensured by Corollary 3.1. Indeed, for any fixed $h \in \mathcal X $,
the mapping $f_h(t, x) = f(t , h(t), x)$ satisfies $\|f_h(t, x)\|
\leq M$ for all $(t, x) \in I\times E$, \\$\|f_h(t, x) -f_h(t, y)\|
 \leq M\|x-y\| $ for all $(t,
x, y) \in I\times E\times E$,    while  the estimate of the velocity
is given  in the proof of Theorem \ref{Theorem 3.2}.
 Now for each $h \in {\mathcal X}$, let us
consider the mapping
$$\psi(h) (t) := x_0 + \int_0 ^t  u_h(s) ds\;\;\;    \forall t \in I.$$
Then it is clear that $ \psi(h) \in  {\mathcal X}$ because by $(H_3)
^{'}$, $u_h(t) \in D(A(t)) \subset X(t)$ for all $t \in I$.
 \\Our aim is to
prove that  $\psi : \mathcal X  \to \mathcal X $ is continuous in
order to obtain the existence theorem by  a fixed point approach.
This need a careful look using  the estimate of the BVRC solution
given above. It is enough  to show that, if $(h_n)$ converges
uniformly to $h$ in $\mathcal X$, then the sequence $(u_{h_n})$ of
BVRC solutions associated with $(h_n)$ of problems
 $$
\begin{cases}
u_{h_n}(0)=u_0;\\
 u_{h_n}(t)\in D(A(t))\;\;\;\forall t\in I;\\
   -\displaystyle\frac{du_{h_n} }{d\nu }(t)\in A(t) u_{h_n} (t)+ f(t,h_n(t),u_{h_n}(t))  \frac{d\lambda} {d\nu}(t) \hskip 4pt \nu-a.e.\,t\in
   I.
\end{cases}
$$
pointwise converges to  the BVRC solution $u_h$  associated with $h$
of problem
 $$
\begin{cases}
u_h(0)=u_0;\\
 u_h(t)\in D(A(t))\;\;\;\forall t\in I;\\
   -\displaystyle\frac{du_h}{d\nu }(t)\in A(t) u_h(t)+ f(t,h(t),u_h(t))  \frac{d\lambda} {d\nu}(t)  \;\;\;\nu-a.e.\,t\in
   I.
\end{cases}
$$
As  $(u_{h_n})$ is uniformly bounded and bounded in variation since
$\|u_{h_n}(t) -u_{h_n}(\tau)\| \leq K(\nu (]\tau, t])$, for $\tau
\leq t $ with   $(u_{h_n}(t)) \subset D(A(t)) \subset X(t) \subset
\gamma(t) \overline B_E$, for all $t \in I$, it  is relatively
compact, by Theorem \ref{Theorem 3.1} and  the Helly principe
\cite{Por}, we may assume that $(u_{h_n})$ pointwise converges to a
BV mapping $u(\cdot)$. Now, since for all $t\in I$, $u_{h_n }(t) =
u_0+ \int_{]0, t]} \frac{ du_{h_n }} {d\nu}(s) d\nu(s)$ and $\frac{
du_{h_n }} {d\nu} (s) \in K {\overline B}_E$ $\nu$-a.e, we may
assume that $ (\frac{ du_{h_n }} {d\nu}) $ converges weakly in $
L^1(I, E; \nu)$ to $w \in L^1(I, E; \nu)$ with $w(t)\in K {\overline
B}_E$ $\nu$-a.e,  so that
$$ \lim_{n\to\infty} u_{h_n }(t)=  u_0 + \int_{]0, t]}   w(s)  d\nu(s)\;\;\;    \forall t \in I.$$
By identifying the limits, we get
$$u(t) =  u_0 + \int_{]0, t]}   w(s)  d\nu(s) \;\;\;    \forall t \in I,$$
with  $\frac{du} {d\nu}  = w$.   Whence, using the hypothesis on
$f$, we obtain $$ \lim_{n\to\infty} f(t,  h_n (t), u_{h_n}(t)) = f(t,
h (t), u(t))\;\;\; \forall t \in I.$$  As consequence $\big(f(\cdot,
h_n(\cdot), u_{h_n} (\cdot)) \frac{d\lambda} {d\nu} (\cdot)\big)$
pointwise converges to $f(\cdot, h(\cdot), u (\cdot))
\frac{d\lambda} {d\nu} (\cdot) $. Since $ ( \frac{ du_{h_n }}
{d\nu})$ weakly converges to $\frac{du} {d\nu} $ in $ L^1(I, E;
\nu)$, we may assume that
 it Komlos
converges to  $\frac{du} {d\nu}$. For simplicity set for all $t\in
I$, $g_n(t) = f(t,  h_n (t),  u_{h_n}( t) )\frac{d\lambda} {d\nu} (t) $
and $g(t) = f(t, h (t), u_h(t))\frac{d\lambda} {d\nu} (t) $.  There is
a $\nu$-negligible set $N$ such that for $t\in I\setminus N$
$$\lim_{n\to\infty}  \frac{1} {n} \sum_{j= 1}^n  \Big(\frac{ du_{h_j }} {d\nu}(t)   +    g_j(t)\Big)  = \frac{du} {d\nu} (t)  +g(t).$$
Let  $\eta  \in D(A(t))$. From
 $$\Big\langle \frac{ du_{h_n }} {d\nu}  (t)   +g_n(t)  , u(t) - \eta  \Big\rangle = \Big\langle  \frac{ du_{h_n }} {d\nu} (t)+g_n(t)  ,  u_{h_n}(t)
 -\eta\Big\rangle
+ \Big\langle \frac{ du_{h_n }} {d\nu} (t)+g_n(t) ,  u(t)
-u_{h_n}(t) \Big\rangle,$$ let us write \begin{eqnarray*}\frac{1}{n}
\sum_{j= 1}^n \Big\langle \frac{ du_{h_j }} {d\nu} (t) +g_j (t),
u(t) - \eta \Big\rangle  &=& \frac{1}{n}  \sum_{j= 1}^n \Big\langle
\frac{ du_{h_j }} {d\nu} (t) +g_j (t) , u_{h_j} (t)-\eta \Big\rangle
\\&+& \frac{1}{n}  \sum_{j= 1}^n \Big\langle \frac{ du_{h_j }} {d\nu}
(t) +g_j(t) ,  u(t)  -u_{h_j} (t) \Big\rangle, \end{eqnarray*}  so
that
$$\frac{1}{n}  \sum_{j= 1}^n \Big\langle   \frac{ du_{h_j }} {d\nu} (t)+g_j(t),  u(t) - \eta  \big\rangle  \leq
\frac{1}{n}  \sum_{j= 1}^n \big\langle  A^0(t ,\eta) , \eta -
u_{h_j}(t) \Big\rangle + (K+M) \frac{1}{n} \sum_{j= 1}^n \|u(t)
-u_{h_j}(t))\|.$$   Passing to the limit  when $n\rightarrow
\infty$, this last inequality gives immediately
$$\Big\langle  \frac{du} {d\nu} (t)+g(t) , u(t) - \eta  \Big\rangle \leq \big\langle A^0(t,
\eta) , \eta-u(t) \Big\rangle\;\;a.e.$$ On the other hand, since for
all $t\in I$, $u_{h_n}(t)\in D(A(t))$, then $u(t)\in
\overline{D(A(t))}$.
 As a consequence, by Lemma \ref{lem2.1}, $u(t)\in D(A(t))$ and
$$-\frac{du} {d\nu} (t) \in A(t) u(t)+g(t) =  A(t) u(t)  + f(t,  h(
t), u(t))\frac{d\lambda} {d\nu} (t) \;\;\; \nu-a.e.,$$ with $u(0) =
u_0 \in D(A(0))$, so that by uniqueness $u = u_h$. Consequently, for
all $t\in I$,
$$\psi (h_n) (t) -  \psi (h) (t) =   \int_0 ^t    (u_{h_n}(s)- u_{h}(s))  ds,$$
and since  $(u _{h_n}(s)- u_{h}(s))  \rightarrow 0$  and is
pointwise  bounded;   $\|u_{h_n}(s)- u_{h}(s)\|  \leq 2\gamma(s) $,
we conclude by   Lebesgue theorem, that
$$\sup_{t \in I}  \|\psi (h_n) (t) -  \psi (h) (t) \|  \leq     \int_0^T \|u_{h_n}(s)- u_{h}(s)\|ds \longrightarrow 0,$$
 so that  $\psi (h_n)    \rightarrow  \psi (h)$  in $\mathcal{C}(I, E)$.
Whence $\psi :  \mathcal X \rightarrow  \mathcal X $ is continuous
and so has a fixed point, say $h = \psi(h) \in \mathcal X $, that
means
$$
\begin{cases}
h(t) = \psi(h) (t) = x_0+\displaystyle\int_0^t u_h (s) ds \;\;\;\forall t\in I;\\
u_h(0)=u_0;\\
 u_h(t)\in D(A(t))\;\;\;\forall t\in I;\\
   -\displaystyle\frac{du_h}{d\nu }(t)\in A(t) u_h(t)+ f(t, h(t),u_h(t))  \frac{d\lambda} {d\nu}(t)  \;\;\;\nu-a.e.\,t\in
   I.
\end{cases}
$$
\finproof

\begin{coro}\label{coro4.1}   Let  $C: I  \rightrightarrows E$ be a   convex closed  valued multi-mapping  such that
\\
$(i)$ $d_H (C(t), C(s)) \leq |r(t) - r(\tau)|$, for all  $\tau, t
\in  I $;
\\
$(ii)$ $C(t) \subset  X(t) \subset \gamma(t)\overline B_E$ for all
$t \in I$, where $X : I \rightrightarrows E$ is a convex compact
valued Lebesgue-measurable multi-mapping and $\gamma:  I
\longrightarrow \mathbb{R}$  is a nonnegative $L^1(I, \mathbb{R};
\lambda)$-integrable function.
\\
Let $f : I\times E\times E \to E$ satisfying all the hypotheses in
Theorem \ref{th4.1}.  Assume further that   there is  $ \beta \in
]0, 1[$ such that   $\forall t \in  I $, $0\leq  2M  \frac{
d\lambda} {d\nu} (t)d\nu  (\{t\}) \leq \beta < 1$. \\Then, for
$u_0\in C(0)$, $x_0 \in E$,  there is an absolutely continuous
mapping  $x: I \longrightarrow  E$ and a BVRC mapping $u : I
\longrightarrow E$ with density $\frac {du} {d\nu }$ w.r.t $\nu$
satisfying
$$
\begin{cases}
x(t) = x_0+\displaystyle\int_0^t u (s) ds\;\;\;\forall t\in I;\\
u(0)=u_0;\\
 u(t)\in C(t)\;\;\;\forall t\in I;\\
\frac{du}{d\nu}\in L^{\infty}(I, E; d\nu);\\
   -\displaystyle\frac{du}{d\nu }(t)\in N_{C(t)} (u(t))+ f(t, x(t),u(t))  \frac{d\lambda} {d\nu}(t)  \;\;\;\nu-a.e.\,t\in
   I.
\end{cases}
$$
\end{coro}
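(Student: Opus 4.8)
The plan is to recognize Corollary \ref{coro4.1} as the direct specialization of Theorem \ref{th4.1} to the sweeping-process operator $A(t) = \partial \delta_{C(t)} = N_{C(t)}$, so that the entire argument reduces to checking that this particular choice of $A(t)$ satisfies $(H_1)$, $(H_2)$ and $(H_3)^{'}$, after which Theorem \ref{th4.1} applies verbatim. First I would observe that, since each $C(t)$ is nonempty closed and convex, the indicator $\delta_{C(t)}$ is proper, convex and lower semicontinuous, so that $A(t) = \partial \delta_{C(t)} = N_{C(t)}$ is a maximal monotone operator with $D(A(t)) = C(t)$.

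Next I would verify the three structural hypotheses. For $(H_1)$ I would invoke the identity $dis(N_{C(t)}, N_{C(s)}) = d_H(C(t), C(s))$, recalled in the introduction, relating the Vladimirov pseudo-distance to the Hausdorff distance; combined with hypothesis $(i)$ and the monotonicity of $r$, this gives, for $0 \leq s \leq t \leq T$,
$$ dis(A(t), A(s)) = d_H(C(t), C(s)) \leq |r(t) - r(s)| = r(t) - r(s) = dr(]s, t]), $$
which is exactly $(H_1)$. For $(H_2)$ I would use that $0 \in N_{C(t)}(x)$ for every $x \in C(t)$, since the zero vector always lies in the normal cone of a convex set at any of its points; hence the minimal-norm selection satisfies $A^0(t, x) = 0$, and $\|A^0(t, x)\| = 0 \leq c(1 + \|x\|)$ holds with $c = 0$, so that $(H_2)$ is trivially met. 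Finally $(H_3)^{'}$ is nothing but hypothesis $(ii)$, namely $D(A(t)) = C(t) \subset X(t) \subset \gamma(t)\overline{B}_E$ for all $t \in I$, with $X$ convex compact valued and Lebesgue-measurable and $\gamma \in L^1(I, \mathbb{R}; \lambda)$.

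With these verifications in place, and since $f$ is assumed to satisfy all the hypotheses of Theorem \ref{th4.1} together with the jump condition $0 \leq 2M \frac{d\lambda}{d\nu}(t) d\nu(\{t\}) \leq \beta < 1$, I would simply apply Theorem \ref{th4.1}. This yields an absolutely continuous mapping $x : I \to E$ and a BVRC mapping $u : I \to E$ with density $\frac{du}{d\nu}$ satisfying the announced system, where the inclusion $-\frac{du}{d\nu}(t) \in A(t) u(t) + f(t, x(t), u(t)) \frac{d\lambda}{d\nu}(t)$ reads $-\frac{du}{d\nu}(t) \in N_{C(t)}(u(t)) + f(t, x(t), u(t)) \frac{d\lambda}{d\nu}(t)$, and the constraint $u(t) \in D(A(t))$ becomes $u(t) \in C(t)$. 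There is no genuine obstacle in this proof: the single point requiring care is the equality $dis(N_{C(t)}, N_{C(s)}) = d_H(C(t), C(s))$, which is the crux that makes $(H_1)$ available and which is already recorded in the paper; everything else is a routine transfer of hypotheses.
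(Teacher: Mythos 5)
Your proposal is correct and matches the paper's intent exactly: the paper states Corollary \ref{coro4.1} without proof, treating it as the immediate specialization of Theorem \ref{th4.1} to $A(t)=\partial\delta_{C(t)}=N_{C(t)}$ (just as Theorem \ref{Theorem  3.4} is derived from Theorem \ref{Theorem 3.3}). Your verifications — maximal monotonicity of $N_{C(t)}$ with $D(A(t))=C(t)$, the identity $dis(N_{C(t)},N_{C(s)})=d_H(C(t),C(s))$ giving $(H_1)$ from $(i)$, $A^0(t,x)=0$ giving $(H_2)$ trivially, and $(ii)$ giving $(H_3)^{'}$ — are precisely the routine checks the paper leaves implicit.
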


\subsection{ A new application to Skorohod problem}
We present  some  new  versions of the Skorohod problem  in Castaing
et al \cite{CMRdF} dealing with the sweeping process associated with
an absolutely continuous (or continuous) closed convex moving set
$C(t)$ in $E= \mathbb{R} ^d$. Although  we deal  with deterministic
case, it is a step forward to   Skorohod problem in  the stochastic
setting, see the recent articles by  Castaing et al \cite{CMRdF,
CMRdF2}, Rascanu \cite{Ras} and  Maticiuc et al \cite{ MRST},  for
references on this stochastic subject

\begin{thm} \label{Theorem 4.2}  Let $E = \mathbb{R} ^d$ and let for every $t\in I=[0, 1]$, $A(t):D(A(t))\subset E\rightrightarrows E$ be a maximal
monotone operator satisfying $(H_1)$ and $(H_2)$.
 Suppose that   $b: I\times
E \rightarrow E$ is a $(\mathcal B(I)\otimes \mathcal{B}(E),
\mathcal B(E))$-measurable mapping  satisfying:
\\
$(j)$  $\| b(t, x)\| \leq  M$,    for all  $(t, x ) \in I\times  E$,
for some nonnegative constant $M$.
\\
$(jj)$ For all $t\in I$, $b(t, \cdot)$ is continuous on $E$.\\ Let
$y_0\in D(A(0))$.  Then there exist a BVRC  mapping $X : I
\longrightarrow E$ and a BVRC  mapping $Y: I \longrightarrow E$
satisfying
 \[
\left\{ \begin{array}{lll}
X(0) = Y(0) = y_0; \\
X(t) = \displaystyle\int_0^t  b(s, X(s)) ds + Y(t)\;\;\;  \forall t
\in I;
\\
Y(t) \in D(A(t))\;\;\;  \forall t \in I;
\\
- \displaystyle\frac{ dY}{d\nu} (t)  \in A(t)Y(t) +
\Big(\displaystyle \displaystyle\int_0^t  b(s, X(s)) ds \Big)
\frac{d\lambda}{d\nu}(t) \;\;\; \nu-a.e.\, t\in I.
\end{array}
\right.
\]
\end{thm}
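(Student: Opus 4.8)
The plan is to decouple the coupled system by taking the Lipschitz ``drift part'' $Z(t):=\int_0^t b(s,X(s))\,ds$ as the fixed-point variable, and to recover the BVRC part $Y$ through the single-valued existence theorem already proved. First I would observe that since $E=\mathbb{R}^d$ is finite dimensional, the ball-compactness hypothesis $(H_3)$ is available in the only form actually used in the proof of Theorem \ref{Theorem 3.2}: there $(H_3)$ serves solely to guarantee relative compactness of the uniformly bounded sequences $(v_n(t))$, and in $\mathbb{R}^d$ every bounded sequence is relatively compact (the uniform bound $\|v_n(t)\|\le M_2$ being already established). The monotonicity limit in Step~3 of that proof rests only on Lemmas \ref{lem2.1}, \ref{lem2.2} and \ref{lem2.4}, which do not use $(H_3)$. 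Hence, for each fixed bounded $\lambda$-measurable perturbation, the full conclusion of Theorem \ref{Theorem 3.2} holds under $(H_1)$ and $(H_2)$ alone.

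Granting this, for any continuous $Z:I\to E$ with $\|Z(t)\|\le M$ the theorem (with $f=Z$ and datum $y_0\in D(A(0))$) yields a unique BVRC mapping $Y_Z$ with $Y_Z(0)=y_0$, $Y_Z(t)\in D(A(t))$, $\|\tfrac{dY_Z}{d\nu}\|_\infty\le K$ and $-\tfrac{dY_Z}{d\nu}(t)\in A(t)Y_Z(t)+Z(t)\tfrac{d\lambda}{d\nu}(t)$ $\nu$-a.e. I would then set $\mathcal L:=\{Z\in\mathcal C(I,E):Z(0)=0,\ \|Z(t)-Z(s)\|\le M|t-s|\ \forall\,s\le t\}$, a convex compact subset of $\mathcal C(I,E)$ by Arzel\`a--Ascoli, and define $\Gamma:\mathcal L\to\mathcal C(I,E)$ by
\[ \Gamma(Z)(t):=\int_0^t b\big(s,\,Z(s)+Y_Z(s)\big)\,ds,\qquad t\in I. \]
Since $\|b\|\le M$ and $\Gamma(Z)(0)=0$, the map $\Gamma$ sends $\mathcal L$ into itself. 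A fixed point $Z=\Gamma(Z)$ then furnishes the solution upon setting $X:=Z+Y_Z$ and $Y:=Y_Z$: indeed $Z(t)=\int_0^t b(s,X(s))\,ds$ gives $X(t)=\int_0^t b(s,X(s))\,ds+Y(t)$, while $X(0)=Y(0)=y_0$, $Y(t)\in D(A(t))$, and the required inclusion is exactly the one solved by $Y_Z$.

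The crux, and the step I expect to be the main obstacle, is the continuity of $\Gamma$ for the uniform topology, which reduces to the continuous dependence $Z\mapsto Y_Z$. I would take $Z_n\to Z$ uniformly in $\mathcal L$ and argue as in Lemma \ref{Lemma 3.1} and Theorem \ref{th4.1}: the velocities $\tfrac{dY_{Z_n}}{d\nu}$ are bounded by $K$ in $L^\infty(I,E;\nu)$, hence relatively weakly compact in $L^1(I,E;\nu)$, and the $Y_{Z_n}$ are equi-BVRC with $\|Y_{Z_n}(t)-Y_{Z_n}(s)\|\le K\nu(]s,t])$ and pointwise relatively compact (finite dimension). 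By the Helly principle \cite{Por} a subsequence converges pointwise to a BVRC map $\bar Y$ whose density is the weak $L^1$-limit of the velocities; passing to the limit in the monotonicity inequality by a Komlos argument together with Lemmas \ref{lem2.1}, \ref{lem2.2}, \ref{lem2.4} exactly as in Step~3 of Theorem \ref{Theorem 3.2}, and using $Z_n(t)\to Z(t)$, identifies $\bar Y=Y_Z$ via the uniqueness in Theorem \ref{Theorem 3.2}; hence the whole sequence converges, $Y_{Z_n}\to Y_Z$ pointwise. Consequently $Z_n(s)+Y_{Z_n}(s)\to Z(s)+Y_Z(s)$ for every $s$, so by the continuity $(jj)$ of $b(s,\cdot)$, the bound $(j)$ and dominated convergence, $\Gamma(Z_n)(t)\to\Gamma(Z)(t)$ for each $t$; since the maps $\Gamma(Z_n)$ are equi-$M$-Lipschitz, this pointwise convergence upgrades to uniform convergence. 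Thus $\Gamma$ is a continuous self-map of the convex compact set $\mathcal L$, and Schauder's fixed point theorem supplies the fixed point, completing the construction of the pair $(X,Y)$.
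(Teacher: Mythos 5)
Your proof is correct, but it takes a genuinely different route from the paper's. The paper proves Theorem \ref{Theorem 4.2} by successive approximation: starting from $X^0\equiv y_0$ it forms $h^n(t)=\int_0^t b(s,X^{n-1}(s))\,ds$, solves for $Y^n$ via Theorem \ref{Theorem 3.3}, sets $X^n=h^n+Y^n$, and then extracts subsequences ($Y^n\to Y$ pointwise by the Helly principle, $b(\cdot,X^{n-1}(\cdot))\rightharpoonup Z$ weakly in $L^1$), identifying the limit system through the continuity of $b(t,\cdot)$, dominated convergence and the same Komlos machinery you invoke. You instead take the drift $Z$ as the unknown, prove the continuous dependence $Z\mapsto Y_Z$ (exactly the argument of Lemma \ref{Lemma 3.1} and Theorem \ref{th4.1}), and close with Schauder's theorem on the compact convex set $\mathcal L$; structurally this is the fixed-point strategy the paper uses for Theorems \ref{th4.1}, \ref{Theorem 4.7} and \ref{Theorem 4.8}, transplanted to the Skorohod problem. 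Your route buys two things. First, the limit identification is cleaner: in the paper's iteration, the subsequence along which $X^{n}$ converges need not be one along which $X^{n-1}$ converges to the same limit, an index-shift point the paper passes over silently, whereas your formulation has no shifted indices, and the uniqueness of $Y_Z$ upgrades subsequential convergence to convergence of the whole sequence. Second, you justify explicitly why $(H_3)$ may be dropped when $E=\mathbb{R}^d$ --- it enters the proofs of Theorems \ref{Theorem 3.2} and \ref{Theorem 3.3} only through the relative compactness of bounded sequences --- a point the paper leaves implicit when it invokes Theorem \ref{Theorem 3.3} under $(H_1)$ and $(H_2)$ alone. What the paper's iteration buys in exchange is that no fixed point theorem is needed: the solution arises directly as a limit of explicitly constructed approximants.
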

\proof     Let us set for all $t\in I$
$$X ^0(t) = y_0,\;\;
h ^1(t) =   \int_0^t b(s, X ^0(s)) ds,$$ then $h ^1$ is Lipschitz
continuous with $\|h^1(t)\|  \leq M$ for all $t\in I$. By Theorem
\ref{Theorem 3.3}, there is  a unique BVRC mapping $Y ^1 : I \to E$
solution of the problem
\[
\left\{ \begin{array}{lll}
Y^1(0) = y_0; \\
Y^1(t) \in D(A(t))\;\;\;  \forall t \in I;
\\
- \displaystyle\frac{ dY^1}{d\nu} (t)  \in A(t)Y^1(t) +h^1(t)
\displaystyle\frac{d\lambda}{d\nu}(t) \;\;\; \nu-a.e.\,t\in I
\end{array}
\right.
\]
with  $$Y ^1(t) = y_0 + \int_{]0, t] }   \frac{ dY
^1}{d\nu}(s)\;\;\; \forall t \in I$$  and $\big\|\frac{ dY
^1}{d\nu}(t) \big\| \leq K$  $\nu-a.e$,   where $K$ is a positive
constant depending on the data. Set for all $t\in I$
$$X^1(t)  =h^1(t) + Y^1(t)=    \int_0^t b\big(s, X^{0}(s)\big) ds + Y^1(t),$$
so that  $X^1$ is  BVRC. \\
 Now we
construct $X ^n$ by induction  as follows. Let for all $t\in I$
$$
h^n(t) =   \int_0^t b\big(s, X^{n-1}(s)\big)  ds.$$ Then $h^n$ is
Lipschitz continuous  with   $\|h^n(t)\| \leq M$ for all $t \in I$.
By Theorem \ref{Theorem 3.3}, there is  a unique BVRC mapping $Y ^n
: I \to E$ solution of the problem
\[
\left\{ \begin{array}{lll}
Y^n(0) = y_0; \\
Y^n(t) \in D(A(t))\;\;\;  \forall t \in I;
\\
- \displaystyle\frac{ dY^n}{d\nu} (t)  \in A(t)Y^n(t) +h^n(t)
\displaystyle\frac{d\lambda}{d\nu}(t) \;\;\; \nu-a.e.\,t\in I
\end{array}
\right.
\]
with  $$Y ^n(t) = y_0 + \int_{]0, t]}   \frac{ dY ^n}{d\nu}(s)\;\;\;
\forall t \in I$$  and $\big\|\frac{ dY ^n}{d\nu}(t) \big\| \leq K$
$\nu-a.e$.   Set for all $t\in I$
$$X^n(t)  =h^n(t) + Y^n(t)=    \int_0^t b\big(s, X^{n-1}(s)\big) ds + Y^n(t),$$
so that  $X^n$ is  BVRC, and \begin{equation}\label{4.1} - \frac{ dY
^n}{d\nu}(t) \in A(t)Y ^n(t) +  \Big( \int_0^t b\big(s,
X^{n-1}(s)\big) ds \Big)   \frac{d\lambda}{d\nu}(t)  \;\;\;
\nu-a.e.\end{equation} As $(Y^n)$ is  equi-BVRC, and for all $t\in
I$, $(Y^n(t))\subset D(A(t))$, we may assume that $(Y^n)$ converges
pointwise to a BVRC mapping $Y: I \rightarrow E$. Using the estimate
$\|\frac{ dY ^n}{d\nu}(t)\| \leq K$ $\nu$-a.e,
 we may also assume that $ (\frac{ dY ^n}{d\nu }) $
weakly  converges in $L^1(I, E; \nu)$ to $\frac{ dY }{d\nu}$, and by
hypothesis $(j)$, $ \big( b(\cdot, X^{n-1}(\cdot))\big)$  weakly
converges to $ Z\in L^1(I, E; \lambda)$. Hence   $\int_0^t b(s,
X^{n-1}(s)) ds \rightarrow \int_ 0^t Z(s) ds$  for each $t \in I$.
So we get
$$
\lim_{n \rightarrow \infty}   X^n(t)  = \lim_{n \rightarrow \infty}
\bigg(\int_0^t b(s, X^{n-1}(s)) ds + Y^n(t)\bigg) = \int_0^t Z(s) ds
+  Y(t)=:X(t). $$
 As   $(X^n(\cdot))$  pointwise converges to $X(\cdot)$   on $I$,  $ \big( b(\cdot, X^{n-1}(\cdot))\big)$  is uniformly bounded
 and, by hypothesis $(jj)$, it
 pointwise converges to $  b(\cdot, X(\cdot))$.
 Then by Lebesgue's theorem  $$\lim_{n\to\infty}\int_0^t b(s, X^{n-1}(s)) ds = \int_0^t b(s, X(s))
 ds.$$
By identifying the limits we have
$$X(t) =   \int_0^t b\big(s, X(s)\big) ds +Y(t)\;\;\; \forall t \in I.$$
 From  \eqref{4.1},  repeating   the argument
involving Komlos techniques
 we get $Y(t)\in D(A(t))$, for all $t\in I$, and
$$ - \frac{ dY}{d\nu}(t) \in A(t)Y(t) +  \int_0^t b \big(s, X(s)\big) ds\hskip 3pt   \frac{d\lambda}{d\nu}(t)  \;\;\;\nu-a.e.\,t \in I. $$
The proof is therefore complete. \finproof

\begin{thm} \label{Theorem 4.4}   Let $I:= [0, T]$ and let for every $t\in I$, $A(t):D(A(t))\subset \mathbb{R}^e\rightrightarrows  \mathbb{R}^e$
be a time dependent maximal monotone operator satisfying $(H_1)$,
with $r$ continuous instead of right continuous, and $(H_2)$.\\ Let
$z \in C^{1-var} (I, \mathbb{R}^d)$ the space of continuous
functions of bounded variation   defined on $I$ with values in
$\mathbb{R}^d$. Let ${\mathcal L } (\mathbb{R}^d,  \mathbb{R}^e)$
the space of linear mappings $f$ from $\mathbb{R}^d$ to
$\mathbb{R}^e$ endowed with the operator norm
$$ \|f\|_{\mathcal{L}} := \sup_{ x \in \overline{B}_{\mathbb{R}^d}}  \|f(x)\|_{\mathbb{R}^e}.$$
Let us consider  a class of continuous  integrand operator  $b : I
\times \mathbb{R}^e   \to  {\mathcal L } (\mathbb{R}^d,
\mathbb{R}^e)$ satisfying for a nonnegative constant $M$
\\
(a) $\|b(t, x)\|_{\mathcal{L}} \leq M$, for all $(t, x) \in I \times
\mathbb{R}^e$.
\\
(b) $\|b(t, x) -b(t, y)\|_{\mathcal{L}}  \leq   M  \|x-y\|_{
\mathbb{R}^e}$, for all $(t, x, y) \in I \times \mathbb{R}^e \times
\mathbb{R}^e$
\\
Let $a\in D(A(0))$.  Then there exist a  BVC function $X : I
\longrightarrow \mathbb{R}^e$ and a BVC function    $Y:
I\longrightarrow \mathbb{R} ^e$ satisfying
 \[
\left\{ \begin{array}{lll}
X(0) = Y(0) = a; \\
X(t) = \int_0^t  b(\tau, X(\tau)) dz_\tau + Y(t)\;\;\;  \forall t \in I; \\
- \frac{ dY}{d\nu} (t)  \in    A (t, Y (t)) +  \Big(\int_0^t b(\tau,
X(\tau)) dz_\tau\Big)  \frac{d\lambda}{d\nu}(t)\;\;\; d\nu-a.e.\,t
\in I.
\end{array}
\right.
\]
where  $\int_0^t  b(\tau, X(\tau)) dz_\tau$ denotes the
Riemann-Stieltjes integral of  the continuous function  $ b(.,
X(.))$ with respect to $z$.
\end{thm}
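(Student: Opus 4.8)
The plan is to imitate the Picard scheme used in the proof of Theorem \ref{Theorem 4.2}, replacing the Lebesgue driver by the Riemann--Stieltjes driver $\int_0^t b(\tau,\cdot)\,dz_\tau$, and to upgrade the merely subsequential convergence used there to convergence of the whole scheme by means of a contraction estimate. First I would set $X^0(t)\equiv a$ and, given a BVC map $X^{n-1}$, define $h^n(t)=\int_0^t b(\tau,X^{n-1}(\tau))\,dz_\tau$. Since $z\in C^{1-var}(I,\mathbb{R}^d)$, $b$ is continuous and bounded by $M$, and $X^{n-1}$ is BVC, this Riemann--Stieltjes integral is well defined, $h^n$ is continuous of bounded variation, and $\|h^n(t)\|\le M\,|z|([0,t])$ with $\mathrm{var}(h^n)\le M\,|z|(I)$. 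Because $r$ is assumed continuous, the measure $\nu=\lambda+dr$ is non-atomic, so Theorem \ref{Theorem 3.3} (whose hypothesis $(H_3)$ is automatic in $\mathbb{R}^e$) produces a unique \emph{continuous} BV solution $Y^n$ of $-\frac{dY^n}{d\nu}\in A(t)Y^n+h^n\,\frac{d\lambda}{d\nu}$ with $\|\frac{dY^n}{d\nu}\|\le K$ $\nu$-a.e. I would then set $X^n=h^n+Y^n$, again BVC, and iterate.

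The heart of the argument is a contraction estimate for $\delta_n(t):=\|X^n(t)-X^{n-1}(t)\|$. Writing the inclusions for $Y^n$ and $Y^{n-1}$ and using the monotonicity of $A(t)$ gives
\[\Big\langle \tfrac{dY^n}{d\nu}-\tfrac{dY^{n-1}}{d\nu},\,Y^n-Y^{n-1}\Big\rangle\le \|h^n-h^{n-1}\|\,\tfrac{d\lambda}{d\nu}\,\|Y^n-Y^{n-1}\|.\]
Invoking Moreau's differential-measure inequality for $\|Y^n-Y^{n-1}\|^2$ (exactly as in Theorem \ref{Theorem 3.5} and Corollary \ref{Corollaire 3.1}) and then Lemma \ref{lem2.8} yields $\|Y^n(t)-Y^{n-1}(t)\|\le\int_0^t\|h^n(s)-h^{n-1}(s)\|\,ds$. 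Combining this with the Lipschitz bound $\|h^n(t)-h^{n-1}(t)\|\le M\int_0^t\delta_{n-1}(\tau)\,|dz|_\tau$ coming from hypothesis (b), one obtains
\[\delta_n(t)\le M(1+T)\int_0^t \delta_{n-1}(\tau)\,|dz|_\tau.\]

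To turn this into convergence I would iterate the last inequality. Setting $c=M(1+T)$, $V(t)=|z|([0,t])$ and $D_1=\sup_t\delta_1(t)<\infty$, iteration gives $\delta_n(t)\le c^{\,n-1}D_1\int_{0\le\tau_{n-1}\le\cdots\le\tau_1\le t}|dz|^{\otimes(n-1)}=D_1\,(cV(t))^{n-1}/(n-1)!$, where the simplex formula $\int_{0\le s_1\le\cdots\le s_k\le t}|dz|^{\otimes k}=V(t)^k/k!$ holds precisely because $z$ is continuous, i.e. $|dz|$ is non-atomic. Hence $\sup_t\delta_n(t)\le D_1(cV(T))^{n-1}/(n-1)!\to0$, so $(X^n)$ is uniformly Cauchy and converges uniformly to some $X\in\mathcal{C}(I,\mathbb{R}^e)$. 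Since the \emph{whole} sequence converges, $X^{n-1}\to X$ as well, and by hypothesis (b) $\|h^n-h\|_\infty\le M\,|z|(I)\,\|X^{n-1}-X\|_\infty\to0$, where $h:=\int_0^{\cdot}b(\tau,X(\tau))\,dz_\tau$; thus $Y^n=X^n-h^n\to Y:=X-h$ uniformly and $X(t)=\int_0^t b(\tau,X(\tau))\,dz_\tau+Y(t)$ for all $t\in I$.

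It remains to close the monotone inclusion for the limit. Using $\|\frac{dY^n}{d\nu}\|\le K$ I would extract a weak limit $\frac{dY^n}{d\nu}\rightharpoonup \frac{dY}{d\nu}$ in $L^1(I,\mathbb{R}^e;\nu)$ and then run the Komlos/Lemma \ref{lem2.2} argument from Step 3 of Theorem \ref{Theorem 3.2}, now with the uniformly convergent perturbation $h^n\frac{d\lambda}{d\nu}$, to conclude $Y(t)\in D(A(t))$ for all $t$ and $-\frac{dY}{d\nu}(t)\in A(t)Y(t)+\big(\int_0^t b(\tau,X(\tau))\,dz_\tau\big)\frac{d\lambda}{d\nu}(t)$ $\nu$-a.e., with $X(0)=Y(0)=a$. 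I expect the main obstacle to be the contraction estimate itself: marrying the monotonicity/Moreau/Lemma \ref{lem2.8} control of $\|Y^n-Y^{n-1}\|$ to the non-atomic iterated-Stieltjes bound, where both the continuity of $z$ (for the factorial decay) and the continuity of $r$ (so that the $Y^n$ are continuous, making Lemma \ref{lem2.8} applicable and the solutions BVC) are essential.
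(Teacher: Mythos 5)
Your proposal is correct, and it shares the paper's skeleton: the same Picard scheme ($X^0\equiv a$, $h^n(t)=\int_0^t b(\tau,X^{n-1}(\tau))\,dz_\tau$, $Y^n$ the solution furnished by Theorem \ref{Theorem 3.3}, $X^n=h^n+Y^n$), the same uniform bounds on $h^n$ and on the densities $\frac{dY^n}{d\nu}$, and the same Komlos--Lemma \ref{lem2.1} closing of the monotone inclusion. Where you genuinely diverge is the convergence mechanism. The paper argues by soft compactness: $(h^n)$ and $(Y^n)$ are bounded and equicontinuous (continuity of $z$ and of $r$ enter precisely here), so Ascoli's theorem yields uniformly convergent subsequences, and the limit is identified through the Lipschitz condition (b) combined with Proposition 2.7 of \cite{FV10} on stability of Riemann--Stieltjes integrals under uniform convergence of the integrand. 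You instead prove the contraction estimate $\delta_n(t)\le M(1+T)\int_0^t\delta_{n-1}(\tau)\,|dz|_\tau$ by the monotonicity--Moreau--Lemma \ref{lem2.8} computation (exactly the device used in Theorem \ref{Theorem 3.5} and Corollary \ref{Corollaire 3.1}, legitimate here since $\nu$ is non-atomic and the iterates are BVC), and then the simplex/factorial bound for the non-atomic measure $|dz|$ to conclude that the full sequence $(X^n)$ is uniformly Cauchy. Your route costs more machinery but buys whole-sequence convergence, and this is not merely cosmetic: in the paper's subsequential argument the integrand of $h^n$ involves the shifted iterate $X^{n-1}$, whose uniform convergence to the same limit $X$ along the extracted subsequence is used but not justified, whereas your contraction removes this issue entirely; conversely, the paper's approach is shorter, needing only Ascoli and the stability of Riemann--Stieltjes integration rather than the Gronwall-type and iterated-integral estimates. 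Both proofs use hypothesis (b), the continuity of $z$, and the continuity of $r$ (so that $\nu$ is non-atomic, the solutions are BVC, and Lemma \ref{lem2.8} applies) in essential ways, and your final step (weak $L^1(I,\mathbb{R}^e;\nu)$ limit of the densities, identification with $\frac{dY}{d\nu}$, Komlos means, Lemma \ref{lem2.1}) coincides with the paper's.
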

\proof  Let us set, for all $t\in I$,
$$X ^0(t) = a,\;\;
h ^1(t) =   \int_0^t b(\tau, a) dz_\tau$$ then by Proposition 2.2 in
  \cite{FV10}, we have
\begin{equation}\label{4.3.1} \Big\|\int_0^t b(\tau, a) dz_\tau \Big\|_{\mathcal{L}} \leq  \|b(., a) \|_{\infty}
|z|_{1-var: [0, t]}. \end{equation} Moreover
\begin{equation}\label{4.3.2}\int_0^t b(\tau, a) dz_\tau - \int_0^s b(\tau, a) dz_\tau = \int_s ^t  b(\tau, a)  dz_\tau  \end{equation}
so that  by condition $(a)$
$$\|h ^1(t) - h ^1(s) \|  \leq M   |z|_{ 1-var: [s, t] }\;\;\;\forall\, 0 \leq s \leq t \leq
T,$$ and  in particular
$$\|h^1(t)\|  \leq    M  |z|_{1-var: [0, t]}  \leq   M  |z|_{1-var: [0, T]}=: L \;\;\; \forall t \in I.$$  By
Theorem \ref{Theorem 3.3}, there is  a unique  BVC  function $Y ^1:
I \longrightarrow \mathbb{R}^e$ such that
 $$
- \frac{ dY ^1}{d\nu}(t)
 \in    A (t, Y^1 (t)) +  h^1(t)\frac{d\lambda}{d\nu}(t)\;\;\;  \nu-a.e.\,   t \in
I$$ with $\|\frac{ dY ^1}{d\nu}(t)\| \leq K$, $\nu$  a.e. where $K$
is a nonnegative constant, which depends only on the data. Set for
all $t\in I$
$$X^1(t)  =h^1(t) + Y^1(t)=    \int_0^t b\big(\tau, X^{0}(\tau)\big) dz_\tau + Y^1(t),$$
so that  $X^1$ is   BVC with
$$-  \frac{ dY ^1}{d\nu}(t)  \in  A (t, Y^1 (t))+    \Big(\int_0^t b\big(\tau, X ^0(\tau)\big)
dz_\tau\Big) \frac{d\lambda}{d\nu}(t) \;\;\;        \nu-a.e.$$ Now,
we construct $X ^n$ by induction  as follows. Let for all $t\in I$
$$
h^n(t) =   \int_0^t b\big(\tau, X^{n-1}(\tau)\big)  dz_\tau.$$ Then,
by Proposition 2.2 in \cite{FV10} and assumption $(a)$, we have the
estimate \begin{equation}\label{(*)} \|h ^n(t) - h ^n(s)\|  \leq M
|z|_{ 1-var: [s, t] }\;\;\; \forall\, 0 \leq s \leq t \leq
T\end{equation} and in particular \begin{equation}\label{(delta)}
\|h ^n(t)\| \leq M |z|_{ 1-var: [0, t] } \leq  M   |z|_{ 1-var: [0,
T] }=: L \;\;\; \forall\, 0 \leq  t \leq T.\end{equation} By Theorem
\ref{Theorem 3.3}, there is a unique BVC   function
  $Y ^n : I \longrightarrow \mathbb{R}^e $ such that
$$
- \frac{ dY ^n}{d\nu}(t)
 \in  A (t, Y^n (t))+   h^n(t)\frac{d\lambda}{d\nu}(t)\;\;\;    \nu-a.e.\,   t \in
 I$$
with  $\|\frac{ dY ^n}{d\nu}(t)\| \leq K$, $\nu$-a.e., for some
nonnegative constant $K$, which does not depend on $n$ since $L$ is
an upper bound for all $\|h^n(t)\|$. Let us set
$$X^n(t)  =h^n(t) + Y^n(t)=   \int_0^t b\big(\tau, X^{n-1}(\tau)\big) dz_\tau +
Y^n(t)\;\;\;  \forall t \in I,$$ so that $X^n$ is BVC and
\begin{equation}\label{4.3.3} - \frac{ dY ^n}{d\nu}(t)   \in A(t,Y ^n(t)) +
\Big(\int_0^t b\big(\tau, X^{n-1}(\tau)\big) dz_\tau\Big)
\frac{d\lambda}{d\nu}(t), \;\;\; \nu-a.e.\, t \in I. \end{equation}
with $\|\frac{ dY ^n}{d\nu}(t)\| \leq K$, $\nu$-a.e. We note that
$(Y^n)$ is uniformly bounded and equicontinuous, then we may assume
that    it converges uniformly to a  continuous  mapping $Y :
I\longrightarrow \mathbb{R}^e$ and  $ (\frac{ dY ^n}{d\nu}) $ weakly
converges in $L^1(I, \mathbb{R}^e; \nu)$ to $\frac{ dY }{d\nu }$
with $\|\frac{ dY }{d\nu }\| \leq K $, $\nu$-a.e. Now, from
\eqref{(*)} and \eqref{(delta)} we know that $(h ^n)$ is bounded and
equicontinuous. By Ascoli theorem, we may assume that $(h^n)$
converges uniformly to a continuous mapping $h$. Hence $X^n(t)
=h^n(t) + Y^n(t)$ converges uniformly to $X(t) := h(t)+Y(t)$. So,
$\big(b(\cdot, X^{n-1}(\cdot)) \big)$ converges uniformly to
$b(\cdot, X(\cdot))$ using the Lipschitz condition $(b)$. So that,
using Proposition 2.7 in \cite{FV10},  $\Big(\int_0^t b(\tau,
X^{n-1}(\tau)) dz_\tau\Big)$ converges uniformly to $\int_0^t
b(\tau, X(\tau)) dz_\tau$. By identifying the limits we have
$$X(t) =   \int_0^t b\big(\tau, X(\tau)\big) dz_\tau +Y(t)\;\;\;\forall t \in I.$$
As $\int_0^t b\big(\tau, X^{n-1}(\tau)\big) dz_\tau \longrightarrow
\int_0^t b\big(\tau, X(\tau)\big) dz_\tau$
 uniformly  on $I$,   $ (\frac{ dY ^n}{d\nu}) $ weakly
converges in $L^1(I, \mathbb{R}^e; \nu) $ to $\frac{ dY }{d\nu }$, $
\Big(t \mapsto\frac{ dY ^n}{d\nu}(t)+ \big(\int_0^t b (\tau,
X^{n-1}(\tau)) dz_\tau\big)   \frac{d\lambda}{d\nu}(t)\Big) $
converges weakly in $L^1(I,  \mathbb{R}^e; \nu)$ to $t \mapsto\frac{
dY }{d\nu}(t)+ \big(\int_0^t b(\tau, X(\tau) )dz_\tau\big)
\frac{d\lambda}{d\nu}(t)$, from \eqref{4.3.3} we get
$$ - \frac{ dY}{d\nu}(t)-  \Big(\int_0^t b \big(\tau, X(\tau)\big) dz_\tau \Big) \frac{d\lambda}{d\nu}(t)   \in  A (t, Y (t))
\;\;\;\nu-a.e. \, t \in I $$ by repeating the Komlos argument given
in the proofs of the above theorems. The proof is therefore
complete. \finproof

\subsection{A relaxation problem.}

In the same vein we present a new existence of BVRC  solution
dealing with a BVRC perturbation. Let   $C  : I \rightrightarrows
{\overline B}_E$ be a convex closed valued
mapping with bounded right continuous retraction, in the sense that
there is a bounded and right continuous function $\rho : I \to [0,
+\infty[$ such that $ e(C(t), C(\tau)) \leq \rho(t) -\rho(\tau)$,
for all $t, \tau \in I$ $(\tau \leq t)$. Suppose further  that
$Gr(C) \in \mathcal B(I) \otimes \mathcal B(E)$. We denote by
$$S^{BVRC}_C := \big\{ u: I  \rightarrow  E:\; u \; \textmd{is} \;BVRC,\;      u(t) \in C(t)\;\;  \forall t \in I \big\}; $$
$$S^{\infty } _C := \big\{ u \in L^\infty(I , E;  \lambda):  \;    u(t) \in C(t)  \;\;  \forall t \in I \big\}.$$
 By  Valadier  \cite{Va2},  these sets are nonempty and
  $cl (S^{BVRC}_C)=  S^{\infty}_C $,  here $cl$ denotes the closure with respect
  to the  $\sigma( L^\infty,  L^1)$-topology.  Shortly,  $S^{BVRC}_C$ is dense in
  $S^{\infty}_C $  with respect to this topology. Then we have the following.

\begin{thm}   \label{Theorem 4.3}   Let   for every $t\in I$,  $A(t): D(A(t))\subset E \rightrightarrows E$ be
a maximal monotone operator satisfying
$(H_1)$, $(H_2)$ and $(H_3)$.
 Let $a : E \to  \mathbb{R}$ be a mapping  such that
 \\
 (i) $|a(x)| \leq M$, for all $x \in E$,  for some constant $M>0$.
 \\
 (ii) $|a(x) - a(y) |  \leq M \|x-y\|$,   for all $x, y  \in E$.
\\
Assume further that   there is  $ \beta \in ]0, 1[$  such that
$\forall t \in I$, $0\leq  2M  \frac{ d\lambda} {d\nu} (t)d\nu
(\{t\}) \leq \beta < 1$. Then for $u_0\in D(A(0))$ the following
hold:
\\
(1) the  set ${\mathcal S}_{  S^{\infty }_C   }$ of BVRC solutions
to the inclusion
\[
 \left\{ \begin{array}{lll}  u(0)= u_0;\\
u(t) \in D(A(t)) \;\;\; \forall t \in I;\\
\displaystyle\frac{du} {d\nu} (t) \in L^\infty(I, E; \nu );\\
 -\displaystyle\displaystyle\frac{ du } {d\nu} (t)   \in A(t) u (t)  +  a (u(t)) h(t) \displaystyle\frac{d\lambda} {d\nu}(t) \;\;\;
 \nu-a.e.,\;
 h \in  S^{\infty }_C
 \end{array}
\right.
\]
is nonempty and   compact  with respect to the topology of pointwise convergence
on $I$.
\\
(2) The  set  ${\mathcal S}_{ S^{BVRC}_C } $  of BVRC solutions  to
the inclusion
\[
 \left\{ \begin{array}{lll}  u(0)= u_0;\\
u(t) \in D(A(t))\;\;\;  \forall t \in I;\\
\displaystyle\frac{du} {d\nu} (t) \in L^\infty(I, E; \lambda )\\
 -\displaystyle\frac{ du } {d\nu} (t)   \in A(t) u (t)  +  a(u(t)) h(t)\displaystyle\frac{d\lambda} {d\nu}(t)  \;\;\;
 \nu-a.e.,\; h \in   S^{BVRC}_C
 \end{array}
\right.
\]
is nonempty  and   is dense in  the compact set   ${\mathcal S}_{
S^{\infty }_C}$.
\end{thm}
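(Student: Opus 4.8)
The plan is to reduce everything to the single-valued existence--uniqueness statement of Corollary \ref{Corollaire 3.1}, to the pointwise compactness mechanism of Lemma \ref{Lemma 3.1}, and to the Valadier density $cl(S^{BVRC}_C)=S^\infty_C$ recalled above. Fix an admissible control $h$ (in $S^\infty_C$ for part (1), in $S^{BVRC}_C$ for part (2)) and put $f(t,x):=a(x)h(t)$. Since $h(t)\in C(t)\subset\overline B_E$, hypotheses (i)--(ii) on $a$ give $\|f(t,x)\|\leq M$ and $\|f(t,x)-f(t,y)\|\leq M\|x-y\|\,\|h(t)\|\leq M\|x-y\|$, while $f(\cdot,x)$ is measurable (after passing to a Borel representative of $h$), and the standing smallness hypothesis $0\leq 2M\frac{d\lambda}{d\nu}(t)\nu(\{t\})\leq\beta<1$ is exactly the one demanded there. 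Thus Corollary \ref{Corollaire 3.1} yields a unique BVRC solution $u_h$, giving non-emptiness in both (1) and (2); and because $\|f\|\leq M$ is bounded uniformly in $h$, the velocity estimate of Theorem \ref{Theorem 3.2} provides a constant $K$ (and a norm bound $K_1$) depending only on the data, with $\|\frac{du_h}{d\nu}(t)\|\leq K$ and $\|u_h(t)\|\leq K_1$ $\nu$-a.e., uniformly over all admissible $h$.

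For the compactness asserted in (1) I would take solutions $u_n:=u_{h_n}$ with $h_n\in S^\infty_C$. The set $S^\infty_C$ is convex, bounded and $\sigma(L^\infty,L^1)$-closed, hence $\sigma(L^\infty,L^1)$-compact, so extract $h_n\to h\in S^\infty_C$ weakly*; the densities $\frac{du_n}{d\nu}$ are bounded by $K$ in $L^\infty(I,E;\nu)$, so extract them converging weakly in $L^1(I,E;\nu)$ to some $z$; and since $u_n(t)\in D(A(t))\cap K_1\overline B_E$ is relatively compact by $(H_3)$ while $(u_n)$ is uniformly BVRC, the Helly--Banach principle used in Theorem \ref{Theorem 3.1} gives a subsequence with $u_n(t)\to u(t)$ strongly for every $t$, the limit $u$ being BVRC with $\frac{du}{d\nu}=z$.

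The key analytic point is the convergence of the perturbations. Testing against $g\in L^\infty(I,E;\nu)$ turns the relevant integral into $\int_I\langle a(u_n)h_n,g\rangle\,d\lambda$, and splitting $a(u_n)h_n-a(u)h=(a(u_n)-a(u))h_n+a(u)(h_n-h)$ I get: the first term tends to $0$ by dominated convergence (as $a$ is Lipschitz, $u_n(t)\to u(t)$ and $\|h_n\|\leq 1$), and the second tends to $0$ since $h_n\to h$ weakly* and $a(u)g\in L^1$. Hence $a(u_n(\cdot))h_n(\cdot)\frac{d\lambda}{d\nu}\to a(u(\cdot))h(\cdot)\frac{d\lambda}{d\nu}$ weakly in $L^1(I,E;\nu)$. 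Both this sequence and $(\frac{du_n}{d\nu})$ then Komlos converge, so feeding them through the monotonicity argument of Theorem \ref{Theorem 3.2} and Lemma \ref{Lemma 3.1} --- using Lemma \ref{lem2.2} to get $u(t)\in D(A(t))$ and Lemmas \ref{lem2.1} and \ref{lem2.4} to pass to the limit --- yields $-\frac{du}{d\nu}(t)\in A(t)u(t)+a(u(t))h(t)\frac{d\lambda}{d\nu}(t)$ $\nu$-a.e. By uniqueness $u=u_h\in\mathcal S_{S^\infty_C}$, which establishes the pointwise compactness.

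For the relaxation statement (2), I would take an arbitrary $u=u_h\in\mathcal S_{S^\infty_C}$ with $h\in S^\infty_C$ and use the Valadier density to pick $h_n\in S^{BVRC}_C$ with $h_n\to h$ weakly*; the associated solutions $u_{h_n}\in\mathcal S_{S^{BVRC}_C}$ converge pointwise to $u_h=u$ by exactly the argument of the previous two paragraphs (the limit being pinned down by uniqueness). Hence every element of $\mathcal S_{S^\infty_C}$ is a pointwise limit of elements of $\mathcal S_{S^{BVRC}_C}$, which is the desired density. I expect the principal obstacle to be this joint passage to the limit: reconciling the merely weak convergence of $h_n$ with the strong pointwise convergence of $u_n$ inside the nonlinear product $a(u_n)h_n$, and then threading the result through the Komlos step so that the monotone inclusion survives in the limit; the remaining boundedness and selection facts are already supplied by the earlier results.
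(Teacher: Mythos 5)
Your proposal is correct, and its overall architecture coincides with the paper's: reduction to Corollary \ref{Corollaire 3.1} for existence and uniqueness of $u_h$ for each control $h$, uniform velocity bounds, extraction of a weak* limit of the controls and a weak $L^1(I,E;\nu)$ limit of the densities, strong pointwise convergence via $(H_3)$ and Helly, Komlos plus monotonicity to pass to the limit in the inclusion, identification of the limit by uniqueness, and Valadier's density for part (2). The one genuine difference is the treatment of the key step, the weak convergence of the nonlinear products $a(u_n(\cdot))h_n(\cdot)$ (the paper's ``Main fact''). The paper writes $\langle g, a(v_{h_n})h_n\rangle = \langle a(v_{h_n})g, h_n\rangle$ and invokes Grothendieck's lemma: on bounded subsets of $L^\infty$ the topology of convergence in measure coincides with the Mackey topology $\tau(L^\infty,L^1)$, so the pointwise, boundedly convergent sequence $k_n = a(v_{h_n})g$ may be paired against the weakly convergent $(h_n)$. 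You instead split $a(u_n)h_n - a(u)h = (a(u_n)-a(u))h_n + a(u)(h_n-h)$, handling the first term by dominated convergence (Lipschitz property of $a$, strong pointwise convergence of $u_n$, and $\|h_n\|_\infty \leq 1$) and the second by testing the weak* convergence $h_n \to h$ against the $L^1$ function $a(u)g$. Your argument is more elementary and self-contained, avoiding the Mackey-topology machinery entirely; it works precisely because the controls are uniformly bounded by $1$ and $a$ is Lipschitz, whereas the paper's Grothendieck argument is the abstract form of the same estimate and would also cover situations where one only has convergence in measure of $a(u_n)g$. Two cosmetic remarks: your extraction argument gives sequential compactness, while the paper obtains topological compactness as the continuous image of the compact metrizable set $S^{\infty}_C$ under $h \mapsto v_h$; your sub-subsequence-plus-uniqueness step in part (2) in fact establishes exactly that continuity, so the upgrade costs nothing. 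Also, your appeal to Lemma \ref{lem2.4} is unnecessary here, since the operator $A(t)$ does not vary along the approximating sequence (unlike in Theorem \ref{Theorem 3.2}); Lemmas \ref{lem2.1} and \ref{lem2.2} together with $(H_2)$ suffice.
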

\proof   We first note that if $h \in   S^{\infty}_C$  and   $v : I
\to E$ is BVRC, then $t \mapsto  a(v(t)) h(t) $ belongs to
$L^\infty(I , E;  \lambda)$. Let us also note that  the mapping
$f_h:I\times E\to E$ defined by $ f_h(t, x) := a(x) h(t)$ satisfies
the conditions $\|f_h(t, x)\| \leq M$ for all $(t, x)\in I\times E$,
and $ \|f_h(t, x) -f_h(t, y)\|
 \leq M \|x-y\|$ for all $(t,x, y)\in I\times E\times E$, for any $h  \in
S^{\infty}_C$. By Corollary \ref{Corollaire 3.1}, for each   $h \in
S^{\infty}_C$ (resp.  $h \in S^{BVRC}_C$), there is a unique BVRC
solution $v_h$ to the inclusion
\[
\left\{ \begin{array}{lll}
v_h(0) = u_0;  \\
v_h(t) \in  D(A(t))\;\;\; \forall t \in I;\\
\displaystyle\frac{dv_h} {d\nu} (t) \in L^\infty(I, E; \nu );\\
- \displaystyle\frac{dv_h} {d\nu} (t) \in A(t)  v_h(t)+ a(v_h(t))
h(t) \displaystyle\frac{d\lambda} {d\nu}(t)  \;\;\;   \nu-a.e.\,t\in
I.
\end{array}
\right.
\]
So the  BVRC solutions sets  are  given by:\\    ${\mathcal S}_{
S^{\infty }_C   } =   \{ v_h :\; h \in     S^{\infty }_C   \} $, and
${\mathcal S}_{ S^{BVRC}_C } =  \{ v_h :\; h \in    S^{BVRC}_C\}$.
Let $(h_n)  \subset S^{\infty }_C$. As it is shown in the proof of
Theorem \ref{Theorem 3.3}, the sequence $(v_{h_n })$ of BVRC
solutions  is equi-BVRC, namely
$$ v_{h_n}(t) =  u_0 +   \int_{]0, t] } \frac {dv_{h_n}  } {d\nu } (s)   ds
\;\;  \forall t \in I,\;\;  \big\|\frac {dv_{h_n}  } {d\nu}
(s)\big\|  \leq K\;\;\nu-a.e.$$ By weak compactness we may ensure
that   $(\frac{ dv_{h_n}} {d\nu}) \rightarrow   w$  weakly in
$L^1(I, E;  \nu)$ with $\|w(t)\| \leq K$ $\nu$-a.e., so that
$v_{h_n}(t) \to v(t)=  u_0 + \int_{]0, t] } w(s)  {d\nu } (s)   ds
$, for all $t \in I$ with  $\frac {dv} {d\nu } (s)= w(s) $. Further
it is clear that $a(v_{h_n} (t)) \rightarrow a(v(t))$. Now we prove
\\
{\bf Main fact:  $a(v_{h_n}(\cdot)) h_n (\cdot)   \rightarrow
a(v(\cdot)) h(\cdot) $ weakly in  $L^1(I, E;  \lambda)$}. Indeed,
let $g \in L^\infty(I,  E ; \lambda)$.  We have for all $t\in I$
$$\big\langle   g(t) ,  a(v_{h_n}(t)) h_n(t)  \big\rangle =   \big\langle a(v_{h_n}(t))  g(t), h_n(t) \big\rangle. $$
 It is clear that $ k_n(t):=  a(v_{h_n}(t))  g(t) $ and   $ k(t):=  a(v(t))  g(t) $
 satisfy
  $ \|k_n(t)\|  \leq M \|g(t))\|$  and  $ \|k(t)\|  \leq M \|g(t))\|$ for all $t \in I$
 with $k_n(t) \rightarrow k(t)$.
As $(h_n)$ weakly converges to  $h$ in  $ L ^1(I, E; \lambda)$ we
get
  $ \underset{n\to\infty}{\lim}\langle k_n,  h_n\rangle=   \langle k, h\rangle$
  \footnote{Here one may invoke a general fact, that on bounded subsets of $L^\infty$
 the topology of convergence
in measure coincides with  the topology  of uniform convergence  on
uniformly integrable sets, i.e. on relatively weakly compact
subsets, alias the Mackey topology. This is a  lemma due to
Grothendieck \cite{Gr} [Ch.5 \S 4 no 1 Prop. 1 and exercice]  (see
also \cite{Cas3} for a more general result concerning  the Mackey
topology  for  bounded  sequences  in $L^\infty_{E^*}$)},
  that is
$$ \lim_{n\to\infty} \int_0^T    \big\langle  g(t) , a(v_{h_n}(t)) h_n(t)    \big\rangle dt
= \int_0 ^T \big\langle g(t) ,  a(v(t)) h(t) \big\rangle dt.$$ This
shows  the required fact.
 From   $\frac{ dv_{h_n}} {d\nu}(\cdot)+ a(v_{h_n}(\cdot)) h_n(\cdot)  \frac{d\lambda} {d\nu}(\cdot)
 \rightarrow   \frac{ dv} {d\nu}(\cdot) + a(v(\cdot)) h(\cdot) \frac{d\lambda} {d\nu} (\cdot)$
 weakly   $L^1(I, E; \nu)$
and the inclusion
$$  -\frac{ dv_{h_n }} {d\nu} (t) - a (v_{h_n}(t)) h_n(t)  \frac{d\lambda} {d\nu}(t)   \in A(t)  v_{h_n}  (t)\;\;\;  \nu-a.e.,$$
by repeating the convergence limit  involving Komlos  argument given
in the proof of Theorem \ref{Theorem 3.3},    we get
$$-\frac{ dv} {dt} (t)  - a(v(t) ) h(t)  \frac{d\lambda} {d\nu}(t)  \in A(t) v(t)\;\;\;   \nu-a.e.\,t\in I.$$
By uniqueness  we have  $v= v_h$. We conclude that the mapping $\phi
: h \mapsto v_h  $ from  the compact metrizable set  $ S^{\infty }_C
\subset   L^\infty(I, E; \lambda) $ to ${\mathcal S}_{   S^{\infty
}_C }$ endowed with   the topology of pointwise convergence is
continuous. Hence $\{v_h  :\;  h \in     S^{\infty }_C\} $ endowed
with the topology of pointwise convergence is compact. Since
$S^{BVRC}_C $ is dense in $ S^{\infty }_C  $, we conclude that
$\{v_h :\; h \in S^{BVRC}_C \}$ is dense in  $\{v_h :\; h \in
S^{\infty }_C \}$. \finproof

\begin{thm}   \label{Theorem 4.4}  Let   for $t\in I$,  $A(t): D(A(t))\subset E \rightrightarrows E$ be
a maximal monotone operator satisfying $(H_1)$, $(H_2)$ and $(H_3)$.
Let
 $Ext
({\overline B}_E )$  be the set of extreme points of   ${\overline
B}_E$. Let us denote
$${\mathcal M }_  {{\overline B}_E}  := \big\{ u \in L^\infty(I , E;
\lambda):\;    u(t) \in {\overline B}_E\;\;\; \forall t \in
I\big\}$$
$${\mathcal M}_  {  Ext ({\overline B}_E )  }  := \big\{ u \in L^\infty(I , E;
\lambda):\;     u(t) \in   Ext ({\overline B}_E ) \;\;\;  \forall t
\in I \big\}.$$
 Let $a : E \to  \mathbb{R}$ be a mapping  such that     for some constant
 $M>0$,
 \\
 (i) $|a(x)| \leq M$, for all $x \in E$,
 \\
 (ii) $|a(x) - a(y) |  \leq M \|x-y\|$,   for all $x, y  \in E$.
\\
Assume further that   there is  $ \beta \in ]0, 1[$  such that
$\forall t \in I$, $0\leq  2M  \frac{ d\lambda} {d\nu} (t)d\nu
(\{t\}) \leq \beta < 1$. Then for $u_0\in D(A(0))$, the following
hold:
\\
(1) the set ${\mathcal S}_{ {\mathcal M }_ {{\overline B}_E} }$ of
BVRC solutions   of  the inclusion
\[
 \left\{ \begin{array}{lll}  u(0)= u_0;\\
u(t) \in D(A(t)) \;\;\; \forall t \in I;\\
\displaystyle\frac{du} {d\nu} (t) \in L^\infty(I, E; \nu );\\
 -\displaystyle\frac{ du } {d\nu} (t)   \in A(t) u (t)  +  a (u(t)) h(t) \displaystyle\frac{d\lambda}
 {d\nu}(t)\;\;\;  \nu-a.e., \;\;\;h \in   {\mathcal M }_  {{\overline B}_E}
 \end{array}
\right.
\]
is nonempty and   compact  for the topology of pointwise convergence
on $I$.
\\
(2) The set   ${\mathcal S}_ {  {\mathcal M}_  {  Ext ({\overline
B}_E )  } }$     of BVRC solutions   of  the inclusion
\[
 \left\{ \begin{array}{lll}  u(0)= u_0,\\
u(t) \in D(A(t)) \;\;\; \forall t \in I;\\
\displaystyle\frac{du} {d\nu} (t) \in L^\infty(I, E; \nu );\\
 -\displaystyle\frac{ du } {d\nu} (t)   \in A(t) u (t)  +  a (u(t)) h(t) \displaystyle\frac{d\lambda}
 {d\nu}(t)\;\;\;  \nu-a.e.,\;\;\; h \in    {\mathcal M }_{Ext ({\overline B}_E)}
 \end{array}
\right.
\]
is nonempty  and   is dense in  the compact set   ${\mathcal S}_{
{\mathcal M }_  {{\overline B}_E}}$.
\end{thm}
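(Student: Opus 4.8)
The plan is to mimic line by line the proof of Theorem \ref{Theorem 4.3}, replacing the moving set $C(t)$ by the fixed ball $\overline{B}_E$ and Valadier's density by the density of extreme-point selections. First I would reduce everything to Corollary \ref{Corollaire 3.1}. Fix $h\in\mathcal M_{\overline{B}_E}$ (resp. $h\in\mathcal M_{Ext(\overline{B}_E)}$) and set $f_h(t,x):=a(x)h(t)$. Since $\|h(t)\|\le 1$ $\lambda$-a.e., hypotheses (i)--(ii) on $a$ yield
$$\|f_h(t,x)\|\le M,\qquad \|f_h(t,x)-f_h(t,y)\|\le M\|x-y\|\qquad\forall (t,x,y)\in I\times E\times E,$$
while $f_h(\cdot,x)$ is $\mathcal B(I)$-measurable for each $x$. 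Under the standing smallness condition $0\le 2M\frac{d\lambda}{d\nu}(t)\,d\nu(\{t\})\le\beta<1$, Corollary \ref{Corollaire 3.1} provides a unique BVRC solution $v_h$ with $\|\frac{dv_h}{d\nu}(t)\|\le K$ $\nu$-a.e. Thus the two solution sets are exactly $\phi(\mathcal M_{\overline{B}_E})$ and $\phi(\mathcal M_{Ext(\overline{B}_E)})$, where $\phi:h\mapsto v_h$; in particular both are nonempty.

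Next I would prove that $\phi$ is continuous from $\mathcal M_{\overline{B}_E}$, endowed with the $\sigma(L^\infty,L^1)$-topology, into the solution set equipped with pointwise convergence on $I$. This is where the argument of Theorem \ref{Theorem 4.3} is reused. Taking $h_n\to h$ weakly, the estimates above make $(v_{h_n})$ equi-BVRC with densities bounded by $K$; by $(H_3)$ each orbit $(v_{h_n}(t))$ is relatively compact, so Helly's principle together with Theorem \ref{Theorem 3.1} yields a pointwise limit $v$ whose density $\frac{dv}{d\nu}=w$ is the weak $L^1(I,E;\nu)$-limit of $(\frac{dv_{h_n}}{d\nu})$, and $a(v_{h_n}(t))\to a(v(t))$ for every $t$. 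The crucial point is the weak $L^1$ convergence $a(v_{h_n}(\cdot))h_n(\cdot)\to a(v(\cdot))h(\cdot)$: for $g\in L^\infty(I,E;\lambda)$ one writes $\langle g, a(v_{h_n})h_n\rangle=\langle a(v_{h_n})g, h_n\rangle$, and since $k_n:=a(v_{h_n})g\to k:=a(v)g$ pointwise with $\|k_n\|\le M\|g\|$, the Grothendieck--Mackey lemma invoked in Theorem \ref{Theorem 4.3} (on $L^\infty$-bounded sets convergence in measure coincides with Mackey convergence) gives $\langle k_n,h_n\rangle\to\langle k,h\rangle$. A Komlos passage to the limit in $-\frac{dv_{h_n}}{d\nu}-a(v_{h_n})h_n\frac{d\lambda}{d\nu}\in A(\cdot)v_{h_n}$, exactly as in Theorem \ref{Theorem 3.3}, then produces $-\frac{dv}{d\nu}-a(v)h\frac{d\lambda}{d\nu}\in A(\cdot)v$; uniqueness forces $v=v_h$, so $\phi$ is continuous.

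Finally I would harvest compactness and density. The set $\mathcal M_{\overline{B}_E}$ is a bounded, convex, $\sigma(L^\infty,L^1)$-closed subset of $L^\infty(I,E;\lambda)$, hence weak$^*$-compact; since $E$ is separable, $L^1(I,E;\lambda)$ is separable and this topology is metrizable, so $\mathcal M_{\overline{B}_E}$ is compact metrizable. Part (1) follows at once because $\mathcal S_{\mathcal M_{\overline{B}_E}}=\phi(\mathcal M_{\overline{B}_E})$ is the continuous image of a compact set. For (2) I would invoke the bang-bang density theorem, the analogue of Valadier's density used in Theorem \ref{Theorem 4.3}: the extreme-point selections $\mathcal M_{Ext(\overline{B}_E)}$ are $\sigma(L^\infty,L^1)$-dense in $\mathcal M_{\overline{B}_E}$ (recall that, $E$ being a Hilbert space, $Ext(\overline{B}_E)$ is the unit sphere). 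Continuity of $\phi$ then transports this density to $\mathcal S_{\mathcal M_{Ext(\overline{B}_E)}}=\phi(\mathcal M_{Ext(\overline{B}_E)})$ inside the compact set $\mathcal S_{\mathcal M_{\overline{B}_E}}$.

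The main obstacle is precisely the density of extreme-point selections: unlike the pointwise and Komlos steps, which are copied from the earlier theorems, it rests on a Lyapunov/bang-bang type result for the infinite-dimensional ball $\overline{B}_E$ and must be cited from the literature (e.g. \cite{CV}); the weak-$L^1$ convergence of the products $a(v_{h_n})h_n$ is the only other delicate point, and it is already dispatched by the Mackey-topology lemma used above.
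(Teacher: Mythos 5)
Your proposal is correct and takes essentially the same route as the paper: the paper's own proof consists of the single remark that one argues ``as in Theorem \ref{Theorem 4.3} with appropriate modifications,'' using the density of ${\mathcal M}_{Ext({\overline B}_E)}$ in ${\mathcal M}_{{\overline B}_E}$ via a Ljapunov-type theorem, which is precisely the reduction to Corollary \ref{Corollaire 3.1}, the continuity of $h \mapsto v_h$ through the Grothendieck--Mackey and Komlos arguments, and the transport of compactness and density that you spell out. The only cosmetic difference is the reference for the bang-bang/extreme-point density: the paper cites Castaing's extension of Ljapunov's theorem \cite{Cast} rather than \cite{CV}.
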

\proof  The proof is similar to the proof of  Theorem \ref{Theorem
4.3} with appropriated modifications. By  using the fact that
${\mathcal M}_ {Ext ({\overline B}_E )}$ is dense in ${\mathcal M }_
{{\overline B}_E} $, we conclude that the set $ {\mathcal
S}_{{\mathcal M}_ { Ext ({\overline B}_E )  } }$ is dense in the set
${\mathcal S}_{{\mathcal M }_  {{\overline B}_E}}$ by virtue of
Ljapunov theorem e.g \cite{Cast}.

\begin{rem}
Similar results concerning  sweeping process by convex closed moving
sets, can be easily deduced from Theorem \ref{Theorem 4.3} and
Theorem \ref{Theorem 4.4}.
\end{rem}
\subsection{Fractional evolutions}

Let $I:=[0,1]$, we   investigate in the sequel a fractional order
evolution inclusion problem ${\mathcal P} _{f, A}(D^\alpha)$ coupled
with  a time dependent maximal monotone operator  $A(t)$ with
perturbation $f$ in $E$, of the form
\begin{equation}\label{1*}
D^{\alpha }h(t)+\lambda D^{\alpha-1 }h(t) = u(t)\;\;\;   t \in
I;\end{equation}
\begin{equation}\label{2*}
I_{0^+}^{\beta }h(t)\left\vert _{t=0}\right. := \lim_{t\rightarrow
0}\int_{0}^{t}\frac{(t-s)^{\beta -1}}{\Gamma(\beta)}h(s)ds =
0;\end{equation}
\begin{equation}\label{3*} h(1)=I_{0^+}^{\gamma
}h(1)=\int\limits_{0}^{1}\frac{(1-s)^{\gamma
-1}}{\Gamma(\gamma)}h(s)ds;\end{equation}
\begin{equation*}
 - \frac{du} {dt} (t)  \in   A(t)  u(t)+ f(t, h(t), u(t))\;\;\;a.e.\,  t\in
 I,
\end{equation*}
where $\alpha \in ]1,2],\,\beta \in \lbrack 0,2-\alpha ],\, \lambda
\geq 0,\, \gamma>0$ are given constants, $D^{\alpha }$ is the
standard Riemann-Liouville fractional derivative, $\Gamma$ is the
gamma function and  $f:I \times E \times E \to E$ is a single valued
mapping.

\begin{defi}[Fractional Bochner-integral]
Let $\zeta : I \longrightarrow E$ and  $a\in I$. The fractional
Bochner-integral of order $\alpha > 0$ of the function $\zeta$ is
defined by
$$
    \big(I_{a^+}^{\alpha}\zeta\big)(t) := \int_{a}^{t}\frac{(t-s)^{\alpha -1}}{\Gamma(\alpha)}\zeta(s)ds\;\;\;\forall\, t > a.
$$
\end{defi}

We refer to \cite{KST, MR, Pod} for the general theory of Fractional Calculus and  Fractional   Differential Equations.

We denote by $W^{\alpha,1}_{B, E}(I)$ the space of  all continuous
functions in $\mathcal{C}(I, E)$ such that their Riemann-Liouville
fractional derivative of order $\alpha-1$ are continuous and their
Riemann-Liouville fractional derivative of order $\alpha$ are
Bochner-integrable.

\begin{defi}[Green function \cite{CGPT}]
 Let $\alpha \in ]1,2],\,\omega \in [0, 2-\alpha], \,k \geq 0, \,\varsigma>0$ and
 $G : I \times I \longrightarrow \mathbb{R}$ be the function defined by
\[
G(t,s) = \varphi(s)I_{0^+}^{\alpha-1}(\exp(-\lambda t)) + \left\{
\begin{aligned}
&\exp(\lambda s)I_{s^+}^{\alpha-1}(\exp(-\lambda t))\;\;\; \textmd{if}\;\;0\leq s\leq t\leq 1, \\
\\
&0\;\;\;\;\;
\;\;\;\;\;\;\;\;\;\;\;\;\;\;\;\;\;\;\;\;\;\;\;\;\;\;\;\;\;\;\;\;\;\;\;\textmd{if}\;\;0\leq
t\leq s\leq 1,
\end{aligned}
\right.
\]
where
$$\varphi(s)=\frac{\exp(\lambda s)}{\mu_0}\left[\left(I_{s^+}^{\alpha-1+\gamma}
(\exp (-\lambda t))\right)(1)-\left(I_{s^+}^{\alpha-1}(\exp
(-\lambda t)) \right)(1) \right]$$

with
$$\mu_0 = \left(I_{0^+}^{\alpha-1}(\exp (-\lambda t))\right)(1) - \left(I_{0^+}^{\alpha-1+\gamma}(\exp (-\lambda t))\right)(1).$$
\end{defi}

\begin{lem}\label{Lemma 4.1}\cite{CGPT}
    Let $G$ be the above defined function. Then  \begin{itemize}
        \item [(i)] \, $G(\cdot,\cdot)$ satisfies the following estimate
        $$
        \left|G(t, s)\right| \leq \frac{1}{\Gamma(\alpha)}\left(\frac{1+\Gamma(\gamma+1)}{|\mu_0|\Gamma(\alpha)\Gamma(\gamma+1)}+1 \right)=: M_G.
        $$
        \item [(ii)] \, If $u\in W_{B,E}^{\alpha,1}\left( I\right)$ satisfies boundary conditions \eqref{1*}, \eqref{2*} and \eqref{3*},  then
        \begin{equation*}
        u(t) =\int\limits_{0}^{1}G(t,s)\left( D^{\alpha }u\left(s\right)+\lambda D^{\alpha-1 }u(s)\right) ds \;\;\;\forall t\in I.
        \end{equation*}
        \item [(iii)] \, Let $\zeta\in L^{1}\left(I, E;\lambda\right)$ and let $u_{\zeta}:I\longrightarrow E$ be the function defined by
        \begin{equation*}
        u_{\zeta}\left( t\right): =\int\limits_{0}^{1}G(t,s) \zeta(s) ds \;\;\;\forall t\in I.
        \end{equation*}
        Then
        $$\big(I_{0^+}^{\beta}u_{\zeta}\big)(t)\left\vert _{t=0}\right. =0 \quad \text{and}\quad u_{\zeta}(1) = \left( I_{0^+}^{\gamma}u_{\zeta}\right)(1).$$
        Moreover $u_{\zeta}\in W^{\alpha,1}_{B,E}(I)$ and we have
        $$\left( D^{\alpha }u_{\zeta}\right)(t)+\lambda \left( D^{\alpha-1 }u_{\zeta}\right)(t)=\zeta(t)\;\;\;\forall t\in I.$$
\end{itemize}
        \end{lem}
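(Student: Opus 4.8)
The plan is to establish the three assertions separately, with (ii) carrying the analytic weight and (iii) following by running the same computation backwards. For (i) I would use that $\lambda\geq 0$ forces $0<\exp(-\lambda(\tau-s))\leq 1$ whenever $0\leq s\leq\tau\leq 1$, so that each Riemann--Liouville integral occurring in $G$ is dominated by the corresponding integral of the constant $\mathbf 1$. Since $I_{a^+}^{\sigma}(\mathbf 1)(t)=(t-a)^{\sigma}/\Gamma(\sigma+1)\leq 1/\Gamma(\sigma+1)$ on $I$, after absorbing the factors $\exp(\pm\lambda s)$ into the exponential inside the integral one bounds the three kernels $I_{0^+}^{\alpha-1}$, $I_{s^+}^{\alpha-1}$, $I_{s^+}^{\alpha-1+\gamma}$ by $1/\Gamma(\alpha)$, $1/\Gamma(\alpha)$, $1/\Gamma(\alpha+\gamma)$ respectively; collecting the two summands defining $G(t,s)$, estimating $|\varphi(s)|$, and using the elementary inequality $\Gamma(\alpha)\Gamma(\gamma+1)\leq\Gamma(\alpha+\gamma)$ then yields the stated constant $M_G$.

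For (ii) the key observation is that in the Riemann--Liouville calculus $D^{\alpha}u=\frac{d}{dt}\bigl(D^{\alpha-1}u\bigr)$ on $]1,2]$, so with $\zeta:=D^{\alpha}u+\lambda D^{\alpha-1}u$ the substitution $y:=D^{\alpha-1}u$ converts the defining equation into the scalar linear ODE $y'+\lambda y=\zeta$. Solving it with integrating factor $\exp(\lambda t)$ gives
$$D^{\alpha-1}u(t)=\exp(-\lambda t)\,y(0)+\exp(-\lambda t)\int_0^t\exp(\lambda s)\,\zeta(s)\,ds.$$
I would then invert $D^{\alpha-1}$ by applying $I_{0^+}^{\alpha-1}$ and using the composition rule $I_{0^+}^{\alpha-1}D^{\alpha-1}u=u-\frac{t^{\alpha-2}}{\Gamma(\alpha-1)}\bigl[I_{0^+}^{2-\alpha}u\bigr]_{t=0}$; the boundary condition \eqref{2*} is exactly what forces the coefficient of the singular term $t^{\alpha-2}$ to vanish, leaving $u=I_{0^+}^{\alpha-1}y$. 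A Fubini interchange identifies the $\zeta$-dependent part of $I_{0^+}^{\alpha-1}y$ with $\int_0^t\exp(\lambda s)\,I_{s^+}^{\alpha-1}(\exp(-\lambda\cdot))(t)\,\zeta(s)\,ds$, i.e. the $\{s\le t\}$ piece of the kernel. Finally $y(0)$ is pinned down by the nonlocal condition \eqref{3*}: imposing $u(1)=I_{0^+}^{\gamma}u(1)$ and using the semigroup identity $I_{0^+}^{\gamma}I_{s^+}^{\alpha-1}=I_{s^+}^{\alpha-1+\gamma}$ produces precisely the denominator $\mu_0$ and the weight $\varphi$, whence $y(0)=\int_0^1\varphi(s)\zeta(s)\,ds$ and $u(t)=\int_0^1 G(t,s)\zeta(s)\,ds$.

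Assertion (iii) I would obtain by reading this computation in reverse. For each fixed $s$ the term $\varphi(s)\,I_{0^+}^{\alpha-1}(\exp(-\lambda\cdot))(t)$ is a homogeneous solution of the operator $D^{\alpha}+\lambda D^{\alpha-1}$, since $D^{\alpha-1}I_{0^+}^{\alpha-1}(\exp(-\lambda\cdot))=\exp(-\lambda t)$ solves $y'+\lambda y=0$; hence applying $D^{\alpha}+\lambda D^{\alpha-1}$ to $u_\zeta$ annihilates this part and recovers $\zeta$ from the particular part, giving $u_\zeta\in W^{\alpha,1}_{B,E}(I)$ with $D^{\alpha}u_\zeta+\lambda D^{\alpha-1}u_\zeta=\zeta$. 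The two boundary conditions hold by construction: $\mu_0$ and $\varphi$ were defined exactly so that $u_\zeta(1)=I_{0^+}^{\gamma}u_\zeta(1)$, while the absence of a $t^{\alpha-2}$ component guarantees $\bigl(I_{0^+}^{\beta}u_\zeta\bigr)(t)\big|_{t=0}=0$.

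I expect the main obstacle to be the careful bookkeeping of the Riemann--Liouville initial-value terms rather than any single hard estimate: one must justify each composition identity ($I_{0^+}^{\alpha-1}D^{\alpha-1}$ and $I_{0^+}^{\gamma}I_{s^+}^{\alpha-1}$) in the vector-valued, merely Bochner-integrable setting, verify the Fubini interchanges used to pass from iterated integrals to the kernel $G$, and check that $\mu_0\neq 0$ so that $\varphi$ and hence $G$ are well defined. Handling the full range $\beta\in[0,2-\alpha]$ in \eqref{2*} at once, rather than only the endpoint $\beta=2-\alpha$, also requires a brief separate argument confirming that the singular term is eliminated for every admissible $\beta$.
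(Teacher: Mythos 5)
This lemma is not proved in the paper: it is imported verbatim, together with its Green function and the constant $M_G$, from the cited reference \cite{CGPT}, so there is no internal argument to compare yours against. Your outline is correct and follows what is essentially the derivation in that reference: the substitution $y=D^{\alpha-1}u$ (legitimate since $D^{\alpha}u=\frac{d}{dt}(D^{\alpha-1}u)$ in the Riemann--Liouville calculus) turns the equation into $y'+\lambda y=\zeta$; inverting with $I_{0^+}^{\alpha-1}$, killing the singular $t^{\alpha-2}$ term via \eqref{2*} (for $\beta<2-\alpha$ this follows from the continuity of $u$ built into $W^{\alpha,1}_{B,E}(I)$, which makes $(I_{0^+}^{2-\alpha}u)(0)=0$ automatic when $\alpha<2$), and imposing \eqref{3*} through the identity $I_{0^+}^{\gamma}I_{s^+}^{\alpha-1}=I_{s^+}^{\alpha-1+\gamma}$ yields the kernel $G$ with precisely the weights $\varphi$ and $\mu_0$, while (iii) follows by running the computation backwards. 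Your estimate in (i) also recovers $M_G$ exactly, since $\exp(\lambda s)\,I_{s^+}^{\sigma}(\exp(-\lambda\,\cdot))(t)\leq (t-s)^{\sigma}/\Gamma(\sigma+1)\leq 1/\Gamma(\sigma+1)$ and the inequality $\Gamma(\alpha)\Gamma(\gamma+1)\leq\Gamma(\alpha+\gamma)$ you invoke does hold here: writing $\alpha=1+a$ with $a\in\,]0,1]$, it reads $\Gamma(1+a)\Gamma(1+\gamma)\leq\Gamma(1+a+\gamma)$, which follows because $\ln\Gamma(1+\cdot)$ is convex and vanishes at $0$, hence superadditive on $[0,+\infty[$.
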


From Lemma \ref{Lemma 4.1} we summarize a crucial fact.
\begin{lem}\label{lem 4.2}   {\cite{CGPT}}
Let $\zeta\in L^1(I, E; \lambda)$. Then the boundary value problem
\begin{equation*}
    \left\{
    \begin{array}{ll}
    D^{\alpha }u(t)+\lambda D^{\alpha-1 }u(t) = \zeta(t)\;\;\;\forall t \in I \\
    (I_{0^+}^{\beta}u)(t)\left\vert _{t=0}\right. = 0, \quad u(1)=(I_{0^+}^{\gamma}u)(1)
    \end{array}
    \right.
    \end{equation*}
    has a unique $W^{\alpha, 1}_{B, E}(I)$-solution defined by
    $$
    u(t)= \int_{0}^{1}G(t, s)\zeta(s)ds\;\;\;\forall  t \in I.
    $$
\end{lem}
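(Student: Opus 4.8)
The plan is to read off both existence and uniqueness directly from Lemma \ref{Lemma 4.1}, which already packages all the delicate facts about the Green function $G$. For existence, I would simply take the candidate $u(t) := \int_0^1 G(t,s)\zeta(s)\,ds = u_\zeta(t)$. Part (iii) of Lemma \ref{Lemma 4.1} asserts precisely that this $u_\zeta$ lies in $W^{\alpha,1}_{B,E}(I)$, that it satisfies the two boundary conditions $(I_{0^+}^{\beta}u_\zeta)(t)|_{t=0}=0$ and $u_\zeta(1)=(I_{0^+}^{\gamma}u_\zeta)(1)$, and that $(D^{\alpha}u_\zeta)(t)+\lambda(D^{\alpha-1}u_\zeta)(t)=\zeta(t)$ for all $t\in I$. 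Thus $u_\zeta$ is a $W^{\alpha,1}_{B,E}(I)$-solution of the boundary value problem, and no further computation is needed.

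For uniqueness, I would let $v\in W^{\alpha,1}_{B,E}(I)$ be an arbitrary solution of the same problem. By definition $v$ satisfies the boundary conditions \eqref{2*} and \eqref{3*} together with the fractional equation $(D^{\alpha}v)(t)+\lambda(D^{\alpha-1}v)(t)=\zeta(t)$, which is exactly \eqref{1*}. Hence $v$ meets the hypotheses of part (ii) of Lemma \ref{Lemma 4.1}, yielding the representation $v(t)=\int_0^1 G(t,s)\big((D^{\alpha}v)(s)+\lambda(D^{\alpha-1}v)(s)\big)\,ds$ for all $t\in I$. Substituting the equation gives $v(t)=\int_0^1 G(t,s)\zeta(s)\,ds=u_\zeta(t)$, so $v=u_\zeta$ and the solution is unique.

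Since the substantive analytic content---the explicit form of $G$, its boundedness, and the two directions (ii)/(iii) relating solutions of the boundary value problem to the integral operator with kernel $G$---is entirely absorbed into Lemma \ref{Lemma 4.1}, there is essentially no obstacle remaining at this stage. The only point requiring a moment's care is to verify that the boundary conditions of the present problem coincide verbatim with those in \eqref{2*}--\eqref{3*}, and that the datum $\zeta\in L^1(I,E;\lambda)$ is exactly the regularity demanded in (iii), so that parts (ii) and (iii) apply without any modification.
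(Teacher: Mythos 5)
Your proof is correct and follows exactly the route the paper intends: the paper states Lemma \ref{lem 4.2} as a fact ``summarized'' from Lemma \ref{Lemma 4.1} (both cited from \cite{CGPT}), with existence coming from part (iii) and uniqueness from the representation formula in part (ii), precisely as you argue. Nothing is missing; your write-up simply makes explicit the derivation the paper leaves implicit.
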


\begin{thm} \label{Theorem 4.6} {\cite{CGPT} }  Let  $X : I  \rightrightarrows E$
be a convex  compact valued measurable multi-mapping such that $X(t)
\subset \bar{\gamma}  \overline B_E$ for all $t \in I$, where
$\bar{\gamma}$ is a nonnegative constant.   Then the $W^{\alpha,
1}_{B, E} (I)$-solutions set of problem
$$
\begin{cases}
D^{\alpha }u(t)+\lambda D^{\alpha-1 }u(t) = \zeta(t)\;\;\;  a.e.\,t\in I,\;\;\;\zeta \in S ^1_{X};\\
(I_{0^+}^{\beta}u)(t)\left\vert _{t=0}\right. = 0, \quad
u(1)=(I_{0^+}^{\gamma}u)(1)
\end{cases}
$$
is  a convex compact  subset of  $\mathcal{C}(I, E)$.
 \end{thm}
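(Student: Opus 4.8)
The plan is to identify the solution set explicitly and then run an Ascoli--Arzel\`a type argument. By Lemma \ref{lem 4.2}, for each $\zeta \in S^1_X$ the boundary value problem has a unique $W^{\alpha,1}_{B,E}(I)$-solution given by $u_\zeta(t) = \int_0^1 G(t,s)\zeta(s)\,ds$, so the solution set coincides with $\mathcal{U} := \{ u_\zeta : \zeta \in S^1_X \}$, the image of $S^1_X$ under the map $\zeta \mapsto u_\zeta$. Convexity of $\mathcal{U}$ is then immediate: by Theorem \ref{Theorem 3.1}(i) the set $S^1_X$ is convex (indeed weakly compact in $L^1(I,E;\lambda)$, since $X$ is convex compact valued and bounded by $\bar\gamma$), and $\zeta \mapsto u_\zeta$ is linear, hence carries convex sets to convex sets.

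For compactness, since $\mathcal{C}(I,E)$ is metric I would argue sequentially. Given a sequence $(u_{\zeta_n}) \subset \mathcal{U}$, the weak compactness of $S^1_X$ in $L^1(I,E;\lambda)$ from Theorem \ref{Theorem 3.1}(i) lets me extract a subsequence, not relabeled, with $\zeta_n \rightharpoonup \zeta \in S^1_X$. For each fixed $t \in I$ and $e \in E$ the function $s \mapsto G(t,s)e$ belongs to $L^\infty(I,E;\lambda)$, so testing the weak convergence against it gives $\langle e, u_{\zeta_n}(t)\rangle \to \langle e, u_\zeta(t)\rangle$; that is, $u_{\zeta_n}(t) \rightharpoonup u_\zeta(t)$ weakly in $E$ for every $t$.

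To upgrade this to strong pointwise convergence I would establish pointwise relative compactness. Fixing $t$, the multifunction $s \mapsto G(t,s)X(s)$ is convex compact valued, measurable and integrably bounded (by $|G(t,s)|\bar\gamma \leq M_G\bar\gamma$ via Lemma \ref{Lemma 4.1}(i)), so its integral $\int_0^1 G(t,s)X(s)\,ds$ is a convex compact subset of $E$, using the same compactness of set-valued integrals invoked in the proof of Theorem \ref{th4.1} (cf. \cite{Cas2}). Since $u_{\zeta_n}(t)$ lies in this compact set for all $n$, the weak convergence forces $u_{\zeta_n}(t) \to u_\zeta(t)$ strongly in $E$, for every $t \in I$.

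Finally, uniform convergence will follow from equicontinuity of $\mathcal{U}$. For $t, t' \in I$ and any $\zeta \in S^1_X$,
$$\|u_\zeta(t) - u_\zeta(t')\| \leq \bar\gamma \int_0^1 |G(t,s) - G(t',s)|\,ds,$$
a bound independent of $\zeta$. Here lies the main obstacle: I would show that $t \mapsto G(t,\cdot)$ is continuous from $I$ into $L^1(I,\mathbb{R};\lambda)$. From the explicit form in Lemma \ref{Lemma 4.1}, for each fixed $s$ the map $t \mapsto G(t,s)$ is continuous (the terms $I_{0^+}^{\alpha-1}(\exp(-\lambda t))$ and $I_{s^+}^{\alpha-1}(\exp(-\lambda t))$ are continuous in $t$ for $\alpha \in ]1,2]$, and the second piece vanishes as $t \to s^+$, matching the value $0$ for $t<s$); since $|G| \leq M_G$, dominated convergence yields $\int_0^1 |G(t_k,s)-G(t,s)|\,ds \to 0$ whenever $t_k \to t$. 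As $I$ is compact this continuity is uniform, so the right-hand side above tends to $0$ as $t' \to t$ uniformly in $\zeta$, giving equicontinuity. Combining equicontinuity with the pointwise strong convergence already obtained yields $u_{\zeta_n} \to u_\zeta$ uniformly on $I$, i.e. in $\mathcal{C}(I,E)$, and $u_\zeta \in \mathcal{U}$ because $\zeta \in S^1_X$. Hence $\mathcal{U}$ is sequentially compact, therefore compact, in $\mathcal{C}(I,E)$. The principal technical point is thus the regularity of the Green kernel $G$ in its first variable; everything else reduces to the weak compactness of $S^1_X$ and the compactness of set-valued integrals.
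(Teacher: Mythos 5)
Your proposal is correct. Note, however, that the paper itself contains no proof of Theorem \ref{Theorem 4.6}: it is quoted verbatim from the reference \cite{CGPT}, so there is no internal argument to compare yours against; what you have written is a sound, self-contained reconstruction using exactly the tools the paper makes available. The three pillars of your argument all hold up: (1) the identification of the solution set with $\{u_\zeta : \zeta \in S^1_X\}$, $u_\zeta(t)=\int_0^1 G(t,s)\zeta(s)\,ds$, is precisely the content of Lemma \ref{lem 4.2}, and convexity follows from the linearity of $\zeta \mapsto u_\zeta$ together with the convexity of $S^1_X$ from Theorem \ref{Theorem 3.1}(i); (2) the upgrade from weak pointwise convergence to strong pointwise convergence via the compactness of the Aumann integral $\int_0^1 G(t,s)X(s)\,ds$ is legitimate, since $s \mapsto G(t,s)X(s)$ is measurable, convex compact valued and integrably bounded by $M_G\bar\gamma$, so the Castaing compactness result \cite{Cas2} (invoked elsewhere in the paper for $\int_0^t X(s)\,ds$) applies, and a weakly convergent sequence inside a norm-compact set converges strongly; (3) your treatment of the only delicate analytic point, the continuity of $t \mapsto G(t,\cdot)$ into $L^1(I,\mathbb{R};\lambda)$, is correct: for fixed $s$ the extra kernel piece is bounded by $(t-s)^{\alpha-1}/\Gamma(\alpha) \to 0$ as $t \to s^+$ (using $\alpha>1$), so $G(\cdot,s)$ is continuous, and dominated convergence with the bound $M_G$ of Lemma \ref{Lemma 4.1}(i) plus compactness of $I$ gives the uniform modulus needed for equicontinuity, after which pointwise strong convergence plus equicontinuity yields convergence in $\mathcal{C}(I,E)$.
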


Now  we present our existence theorem for problem  ${\mathcal P}
_{f, A}(D^\alpha)$.
\begin{thm} \label{Theorem 4.7}  Let for every $t\in I$,  $A(t):   D (A(t))\subset E \rightrightarrows E$  be a maximal monotone operator satisfying
$(H_1)$, $(H_2)$ and
\\
$(H_3) ^{'}  $  $D(A(t))\subset X(t) \subset \bar{\gamma } \overline
B_E$ for all $t \in I$, where $X:I\rightrightarrows E$ is a convex
compact valued measurable multi-mapping and  $\bar{\gamma}$ is a
nonnegative constant.
\\
 Let $f : I\times
E\times E \to E$ be such that for every $x, y \in E$, the mapping
$f(\cdot, x, y)$ is $\mathcal{B}(I)$-measurable and for every $t\in
I$, the mapping $f(t, \cdot, \cdot) $ is continuous on $E\times E$
and
satisfies for some nonnegative constant $M$\\
 (i)  $\|f(t, x, y)\| \leq M$   for all $(t,  x, y)\in
I\times E\times E$,
\\
(ii) $\|f(t, z, x)-f(t, z, y)\| \leq M \|x-y\|$ for all $(t, z, x,
y) \in I\times E\times E\times E$.
\\
Assume further that   there is  $ \bar{\beta} \in ]0, 1[$  such that
$\forall t \in I$, $0\leq  2M  \frac{ d\lambda} {d\nu} (t)d\nu
(\{t\}) \leq \bar{\beta} < 1$.
\\
Then   there is   a $W^{\alpha, 1}_{B, E}(I)$ mapping   $h: I
\longrightarrow  E$ and   a BVRC   mapping $u : I  \longrightarrow
E$ satisfying  the  coupled system
$$
\begin{cases}
D^{\alpha }h(t)+\lambda D^{\alpha-1 }h(t) = u(t)  \;\;\;\forall t \in  I;\\
(I_{0^+}^{\beta}h)(t)\left\vert _{t=0}\right. = 0, \quad h(1)=(I_{0^+}^{\gamma}h)(1);\\
u(t) \in  D(A(t))   \;\;\;\forall  t \in  I;  \\
- \frac {du} {d\nu}(t)\in    A(t) u(t)+  f(t, h(t), u(t))
\frac{d\lambda} {d\nu}(t)\;\;\; \nu-a.e. \,t\in I.
\end{cases}
$$
\end{thm}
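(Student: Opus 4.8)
The plan is to recast the coupled system as a single-valued fixed-point problem in $\mathcal{C}(I,E)$, in exact analogy with the argument of Theorem \ref{th4.1}, the Green function $G$ now playing the role that simple integration played there. First I would introduce the set
$$\mathcal{X} := \Big\{ u_\zeta \in \mathcal{C}(I,E) : u_\zeta(t) = \int_0^1 G(t,s)\,\zeta(s)\,ds,\; \zeta \in S^1_X \Big\},$$
which is a convex compact subset of $\mathcal{C}(I,E)$ by Theorem \ref{Theorem 4.6}. For each fixed $h \in \mathcal{X}$ the integrand $f_h(t,x) := f(t,h(t),x)$ is $\mathcal{B}(I)$-measurable in $t$, satisfies $\|f_h(t,x)\| \leq M$ and $\|f_h(t,x)-f_h(t,y)\| \leq M\|x-y\|$, so Corollary \ref{Corollaire 3.1} (whose small-jump hypothesis is precisely the standing assumption $0 \leq 2M \frac{d\lambda}{d\nu}(t)\,d\nu(\{t\}) \leq \bar\beta < 1$) furnishes a unique BVRC solution $u_h$ of
$$-\frac{du_h}{d\nu}(t) \in A(t)u_h(t) + f(t,h(t),u_h(t))\frac{d\lambda}{d\nu}(t) \quad \nu\text{-a.e.},$$
with $u_h(0)=u_0$ and the velocity estimate $\|\frac{du_h}{d\nu}(t)\| \leq K$ $\nu$-a.e., where $K$ depends only on the data.

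The structural key is hypothesis $(H_3)'$: since $u_h(t) \in D(A(t)) \subset X(t)$ for every $t$, the selection $u_h$ belongs to $S^1_X$. I may therefore define the self-map $\psi : \mathcal{X} \to \mathcal{X}$ by
$$\psi(h)(t) := \int_0^1 G(t,s)\,u_h(s)\,ds,$$
and $\psi(h)$ lies in $\mathcal{X}$ by the very definition of that set. Granting continuity of $\psi$, Schauder's fixed point theorem produces $h = \psi(h)$, that is $h(t) = \int_0^1 G(t,s)\,u_h(s)\,ds$; by Lemma \ref{lem 4.2} this forces $h \in W^{\alpha,1}_{B,E}(I)$, the boundary conditions \eqref{1*}--\eqref{3*} to hold, and $D^\alpha h + \lambda D^{\alpha-1}h = u_h$. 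Setting $u := u_h$ then yields exactly the asserted coupled system.

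It therefore remains to prove that $\psi$ is continuous on the compact convex set $\mathcal{X}$, and this is where the main work lies. Taking $h_n \to h$ uniformly in $\mathcal{X}$, I would first show $u_{h_n} \to u_h$ pointwise on $I$ by repeating verbatim the continuity argument of Theorem \ref{th4.1}: the sequence $(u_{h_n})$ is equi-BVRC with $\|u_{h_n}(t)-u_{h_n}(\tau)\| \leq K\nu(]\tau,t])$, and since $(u_{h_n}(t)) \subset D(A(t)) \subset X(t) \subset \bar\gamma\overline{B}_E$ is relatively compact, Theorem \ref{Theorem 3.1} together with the Helly principle extracts a pointwise limit $u$ with $(\frac{du_{h_n}}{d\nu})$ converging weakly in $L^1(I,E;\nu)$ to $\frac{du}{d\nu}$. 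The uniform convergence $f(\cdot,h_n(\cdot),u_{h_n}(\cdot)) \to f(\cdot,h(\cdot),u(\cdot))$, coming from the continuity and Lipschitz control of $f$, then feeds a Komlos--monotonicity passage (using Lemma \ref{lem2.1}, Lemma \ref{lem2.2}, Lemma \ref{lem2.4}) identical to the one in Theorem \ref{th4.1}, yielding $u(t) \in D(A(t))$ and $-\frac{du}{d\nu}(t) \in A(t)u(t)+f(t,h(t),u(t))\frac{d\lambda}{d\nu}(t)$; uniqueness from Corollary \ref{Corollaire 3.1} gives $u = u_h$, so the full sequence converges. Finally, the uniform bound $|G(t,s)| \leq M_G$ of Lemma \ref{Lemma 4.1}, together with $\|u_{h_n}(s)-u_h(s)\| \leq 2\bar\gamma$ and dominated convergence, gives
$$\sup_{t\in I}\|\psi(h_n)(t)-\psi(h)(t)\| \leq M_G\int_0^1 \|u_{h_n}(s)-u_h(s)\|\,ds \longrightarrow 0,$$
so $\psi(h_n) \to \psi(h)$ in $\mathcal{C}(I,E)$. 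Thus $\psi$ is continuous, Schauder's theorem applies, and the resulting fixed point delivers the desired pair $(h,u)$. The delicate point throughout is the identification of the limit inclusion via the Komlos argument; the fractional ingredients (the map $\mathcal{X}$ and the conclusion $h \in W^{\alpha,1}_{B,E}(I)$) enter only through Theorem \ref{Theorem 4.6} and Lemma \ref{lem 4.2} and are otherwise routine.
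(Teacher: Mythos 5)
Your proposal is correct and follows essentially the same route as the paper's own proof: the same compact convex set $\mathcal X$ built from the Green function via Theorem \ref{Theorem 4.6}, the same reduction to Corollary \ref{Corollaire 3.1} for the frozen-argument problem $u_h$, the same self-map $\psi(h)(t)=\int_0^1 G(t,s)u_h(s)\,ds$ whose continuity is obtained by repeating the Komlos--monotonicity machinery of Theorem \ref{th4.1} together with the $M_G$-dominated convergence estimate, and the same fixed-point conclusion translated back through Lemma \ref{lem 4.2}. There is nothing substantive to add or correct.
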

\begin{proof} Let us consider  $$\mathcal X := \big\{  u : I \to E : u(t) = \int_0 ^1 G(t, s) \zeta(s) ds,\;  \zeta\in S^1_{X}  \big\}.$$
From Theorem \ref{Theorem 4.6},   $ \mathcal X$ is  a convex compact
subset of  $\mathcal{C}(I, E)$. For any $h \in \mathcal X$, there is
a unique BVRC solution to the problem
$$
\begin{cases}
u_h(0)=u_0;\\
 u_h(t)\in D(A(t))\;\;\;\forall t\in I;\\
   -\displaystyle\frac{du_h}{d\nu }(t)\in A(t) u_h(t)+ f(t,h(t),u_h(t))  \frac{d\lambda} {d\nu}(t)  \;\;\;\nu-a.e.\,t\in
   I.
\end{cases}
$$
with $u_h(t) = u_0 + \int_{]0, t ]}   \frac{  du_h}{d\nu}(s)
d\nu(s)$ for all $t \in I$ and      $\|\frac{  du_h}{d\nu}(t) \|
\leq  K$ $\nu$-a.e. Existence and uniqueness of such a solution is
ensured by Corollary 3.1. Indeed, for any fixed $h \in \mathcal X $,
the mapping $f_h(t, x) = f(t , h(t), x)$ satisfies $\|f_h(t, x)\|
\leq M$ for all $(t, x) \in I\times E$, \\$\|f_h(t, x) -f_h(t, y)\|
\leq M\|x-y\| $ for all $(t, x, y) \in I\times E\times E$,  while
the estimate of the velocity is given  in the proof of Theorem
\ref{Theorem 3.2}.

For any $h \in \mathcal X $, we consider the mapping $\psi(h) (t) :=
\int_0^1 G(t, s) u_h(s) ds$, for all $t \in I$. Then, it is clear
that $\psi(h) \in  \mathcal X $ because  by $(H_3) ^{'}$ $u_h(t) \in
D(A(t)) \subset X(t) \subset \bar{\gamma}  \overline B_E$ for all $t
\in I$. Our aim is to prove that  $\psi : \mathcal X \to \mathcal X
$ is continuous in order  to obtain the existence theorem by  a
fixed point approach.  This need a careful look using the estimate
of the BVRC solution  given above. It is enough  to show that, if
$(h_n)$ converges uniformly to $h$ in $\mathcal X$, then the
sequence $(u_{h_n})$ of BVRC solutions associated with $(h_n)$ of
problems
 $$
\begin{cases}
u_{h_n}(0)=u_0;\\
 u_{h_n}(t)\in D(A(t))\;\;\;\forall t\in I;\\
   -\displaystyle\frac{du_{h_n} }{d\nu }(t)\in A(t) u_{h_n} (t)+ f(t,h_n(t),u_{h_n}(t))  \frac{d\lambda} {d\nu}(t) \hskip 4pt \nu-a.e.\,t\in
   I.
\end{cases}
$$
pointwise converge to  the BVRC solution $u_h$  associated with $h$
of the problem
 $$
\begin{cases}
u_h(0)=u_0;\\
 u_h(t)\in D(A(t))\;\;\;\forall t\in I;\\
   -\displaystyle\frac{du_h}{d\nu }(t)\in A(t) u_h(t)+ f(t,h(t),u_h(t))  \frac{d\lambda} {d\nu}(t)  \;\;\;\nu-a.e.\,t\in
   I.
\end{cases}
$$
This fact is ensured by repeating the machinery given in the proof
of Theorem \ref{th4.1}. We have, by using $(ii)$ of Lemma \ref{Lemma
4.1}, \begin{eqnarray*}\big\|\psi (h_n) (t) -  \psi (h) (t)\big\|
&=& \big\|\int_0 ^1 G(t, s) u _{h_n}(s) ds  - \int_0 ^1 G(t, s) u
_{h}(s)
ds\big\|\\
&= & \|\int_0 ^1  G(t, s)   \big(u_{h_n}(s)- u _{h}(s)\big)  ds\|\\
&\leq&    \int_0 ^1 M_G \|u_{h_n}(s)- u _{h}(s)\|  ds\end{eqnarray*}
 since  $(u _{h_n}(s)- u_{h}(s))  \rightarrow 0$  and is
pointwise  bounded;  $\|u_{h_n}(s)- u_{h}(s)\|  \leq 2\bar{\gamma}$,
we conclude that
$$\sup_{t \in I}  \|\psi (h_n) (t) -  \psi (h) (t) \|  \leq     \int_0^1 M_G  \|u_{h_n}(s)- u_{h}(s)\|ds \longrightarrow 0,$$
using the  Lebesgue dominated convergence theorem, so that $\psi
(h_n) - \psi (h) \rightarrow  0$  in $\mathcal{C}(I, E)$. That is,
$\psi$ has a fixed point say $h = \psi(h)  \in  \mathcal X $. It
follows that
$$
\begin{cases}
h(t) = \psi(h) (t) = \int_0^1  G(t, s) u_h (s) ds\;\;\;\forall t \in I;\\
u_{h} (t) \in  D ( A(t))\;\;\;  \forall t \in I;\\
  -\displaystyle\frac{du_h}{d\nu }(t)\in A(t) u_h(t)+ f(t, h(t),u_h(t))  \frac{d\lambda} {d\nu}(t)  \;\;\;\nu-a.e.\,t\in I.
\end{cases}
$$
Coming back to Lemma \ref{lem 4.2}  and applying  the above
notations,  this means that we have just shown that there exist a
mapping   $h \in W^{\alpha, 1}_{B, E}(I)$   and a BVRC mapping $u_h:
I \longrightarrow E$ satisfying
$$
\begin{cases}
D^{\alpha }h(t)+\lambda D^{\alpha-1 }h(t) = u_h(t)\;\;\;\forall t\in I;\\
(I_{0^+}^{\beta}h)(t)\left\vert _{t=0}\right. = 0, \quad h(1)=(I_{0^+}^{\gamma}h)(1)\\
u_{h} (t) \in  D ( A(t))\;\;\;   \forall t \in I \\
-\displaystyle\frac{du_h}{d\nu }(t)\in A(t) u_h(t)+ f(t,
h(t),u_h(t))  \frac{d\lambda} {d\nu}(t)  \;\;\;\nu-a.e.\,t\in I.
\end{cases}
$$
\end{proof}

\begin{coro}\label{coro4.2}   Let  $C: I \rightrightarrows E$ be a   convex compact  valued multi-mapping  such that
\\
$(i)$ $d_H (C(t), C(s)) \leq |r(t) - r(\tau)|$, for all  $\tau, t
\in I$;
\\
$(ii)$ $C(t) \subset  X(t) \subset \bar{\gamma} \overline B_E$ for
all $t \in I$   where $X : I \rightrightarrows E$ is a convex
compact valued measurable multi-mapping and $\gamma$ is a
nonnegative constant.
\\
Let $f : I\times E\times E \to E$ satisfying all the hypotheses in
Theorem \ref{Theorem 4.7}. Assume further that   there is  $
\bar{\beta} \in ]0, 1[$ such that $\forall t \in  [0, T] $, $0\leq
2M \frac{ d\lambda} {d\nu} (t)d\nu  (\{t\}) \leq \bar{\beta} < 1$.
Then there is a $W^{\alpha, 1}_{B, E}(I)$ mapping   $h: I
\longrightarrow E$ and a BVRC mapping $u : I  \longrightarrow E$
satisfying  the coupled system
$$
\begin{cases}
D^{\alpha }h(t)+\lambda D^{\alpha-1 }h(t) = u(t)\;\;\;\forall t \in  I;\\
(I_{0^+}^{\beta}h)(t)\left\vert_{t=0}\right. = 0, \quad h(1)=(I_{0^+}^{\gamma}h)(1);\\
u(t) \in  C(t)\;\;\;  \forall t \in  I\\
- \frac {du} {d\nu}(t)\in   N_{C(t)}(u(t))  +  f(t, h(t), u(t))
\frac{d\lambda} {d\nu}(t)\;\;\; \nu-a.e.\,t\in I.
\end{cases}
$$
\end{coro}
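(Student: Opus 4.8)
The plan is to deduce Corollary \ref{coro4.2} directly from Theorem \ref{Theorem 4.7} by specializing the time-dependent maximal monotone operator to the normal cone of the moving set $C$. Concretely, I would set $A(t) = \partial \delta_{C(t)} = N_{C(t)}$ for every $t \in I$. Since $C(t)$ is nonempty closed (indeed compact) convex, $N_{C(t)}$ is a maximal monotone operator of $E$ with $D(A(t)) = C(t)$, so the whole task reduces to checking that this choice meets the three structural hypotheses $(H_1)$, $(H_2)$ and $(H_3)^{'}$ required by Theorem \ref{Theorem 4.7}, after which the conclusion is immediate.

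For $(H_1)$, I would invoke the identity $dis(N_{C(t)}, N_{C(s)}) = d_H(C(t), C(s))$ recorded in the Introduction, namely that the Vladimirov pseudo-distance between the normal cones of two closed convex sets equals their Hausdorff distance. Combined with hypothesis $(i)$, $d_H(C(t), C(s)) \leq |r(t) - r(s)|$, and the fact that $r$ is nondecreasing, this yields $dis(A(t), A(s)) \leq r(t) - r(s)$ for $0 \leq s \leq t \leq T$, which is exactly $(H_1)$ with the given retraction $r$. Hypothesis $(H_2)$ is in fact trivial in this setting: for $x \in C(t) = D(A(t))$ one has $0 \in N_{C(t)}(x)$, so the element of minimal norm satisfies $A^0(t, x) = 0$, whence $\|A^0(t,x)\| = 0 \leq c(1 + \|x\|)$ for any $c \geq 0$. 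Hypothesis $(H_3)^{'}$ holds verbatim: by $(ii)$, $D(A(t)) = C(t) \subset X(t) \subset \bar{\gamma}\overline{B}_E$ with $X$ convex compact valued and measurable.

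The conditions imposed on $f$ and the smallness requirement $0 \leq 2M \frac{d\lambda}{d\nu}(t)\, d\nu(\{t\}) \leq \bar{\beta} < 1$ are identical in the two statements, so with $A(t) = N_{C(t)}$ satisfying $(H_1)$, $(H_2)$ and $(H_3)^{'}$, Theorem \ref{Theorem 4.7} applies and produces a $W^{\alpha, 1}_{B, E}(I)$ mapping $h$ together with a BVRC mapping $u$ solving the fractional coupled system; the abstract inclusion $-\frac{du}{d\nu}(t) \in A(t) u(t) + f(t, h(t), u(t)) \frac{d\lambda}{d\nu}(t)$ becomes $-\frac{du}{d\nu}(t) \in N_{C(t)}(u(t)) + f(t, h(t), u(t)) \frac{d\lambda}{d\nu}(t)$, and the constraint $u(t) \in D(A(t))$ reads $u(t) \in C(t)$, as desired. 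Since the argument is a mechanical specialization, there is no genuine obstacle to overcome; the only point that deserves care is the equality $dis(N_{C(t)}, N_{C(s)}) = d_H(C(t), C(s))$ used to translate the Hausdorff-Lipschitz hypothesis on $C$ into $(H_1)$, but this is precisely the fact quoted in the Introduction and may be cited rather than reproved.
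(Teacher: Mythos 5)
Your proposal is correct and is exactly the route the paper intends: the corollary is stated without proof as an immediate specialization of Theorem \ref{Theorem 4.7} with $A(t)=\partial\delta_{C(t)}=N_{C(t)}$, just as in the proof of Theorem \ref{Theorem  3.4}, using $dis\bigl(N_{C(t)},N_{C(s)}\bigr)=d_H\bigl(C(t),C(s)\bigr)$ for $(H_1)$, $A^0(t,x)=0$ for $(H_2)$, and $D(N_{C(t)})=C(t)\subset X(t)$ for $(H_3)^{'}$.
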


{\bf Comments.} Some comments are in order. An easy comparison
between Theorem \ref{th4.1} and Theorem \ref{Theorem 4.7} shows that
Theorems \ref{th4.1} deals with second order equation {\bf coupled}
with a time-dependent maximal monotone  BVRC in variation evolution,
while Theorem \ref{Theorem 4.7} deals with fractional  order
equation coupled with a time-dependent  maximal monotone   BVRC in
variation evolution, in particular, coupled  with  the sweeping
process associated with a convex compact valued BVRC mapping; see
Corollary \ref{coro4.1} and Corollary \ref{coro4.2} respectively.
Actually, there is a pioneering work in Castaing et al \cite {CGLX}
dealing with existence of AC solutions for a system of fractional
order (second order) equation coupled with a time and state
dependent absolutely continuous maximal monotone operator evolution.
A large synthesis of the study of dynamical systems coupled with
monotone set-valued operators  is given in  Brogliato et al
\cite{BT}.

\subsection{Second order BVRC  evolution inclusion with a time and state dependent maximal monotone operator}

Let $I = [0, T]$ and let $E$ be a separable Hilbert space. We begin
with a useful lemma that is inspired from \cite {CGLX}.

\begin{lem}\label{lem4.3}   Let for every $(t, x)\in I\times E$, $ A_{(t, x)} :  D ( A_{(t, x)}) \subset E \longrightarrow 2^E$
 be a maximal monotone operator satisfying:
    \\
     $(\mathcal {H}_1)$ $dis \big(A_{(t, x)},  A_{(\tau, y )}\big)  \leq   r(t) -r(\tau)  +   \|x-y\|$, for all $0 \leq \tau \leq t  \leq T$
     and for all $(x, y) \in E\times E$, where
    $r:I\longrightarrow [0, +\infty[$  is nondecreasing and right  continuous on $I$.\\
    $(\mathcal H_2)$ $\|A^0_{(t, x)} y\| \leq c( 1+\|x\|+ \|y\|)$,  for all  $(t, x, y) \in   I\times E \times D( A_{(t, x)} )$,
    for some nonnegative constant
    $c$.
\\
    Let $X: I\rightrightarrows E$ be a convex compact valued  Lebesgue-measurable multi-mapping
    such that
    $X(t) \subset \gamma (t)\overline B_E $  for all $t \in I$,
    where $\gamma $ is a nonnegative Lebesgue-integrable function and let
    $$\mathcal X := \big\{ u_\zeta \in \mathcal{C}(I, E):\;   u_f (t) =x_0+  \int_0 ^t  \zeta(s) ds,\;\forall t \in I,\; \zeta \in S ^1_X\big\}.$$
Then the following hold.
\\
$(a) $  $\mathcal X$ is a convex compact subset of $ \mathcal{C}(I,
E) $ and equi-absolutely continuous.
\\
$(b)$  For any $ h \in \mathcal X$,  the operator  $A_{(t, h(t) )}$
is equi-BVRC in variation, in the sense that there is a non
decreasing right continuous  mapping $\rho: I  \longrightarrow [0,
+\infty[$ such that
$$dis( A_{(t, h(t))}, A_{(\tau, h(\tau))}) \leq  \rho(t)-\rho(\tau)\;\;\;  \forall \, \tau, t  \in I\;(\tau\leq t).$$ Further,
$ \|A ^0_{(t, h(t) )} y \| \leq  d(1+\|y\|)$ for all $y \in D(
A_{(t, h(t) )} )$  where $d$ is a nonnegative  constant.
\end{lem}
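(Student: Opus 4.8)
The plan is to prove the two assertions separately, disposing of $(a)$ by exactly the integral-compactness argument already used for the set $\mathcal X$ in Theorem \ref{th4.1}, and concentrating the real effort on $(b)$, whose role is to collapse the time-and-state dependence of $A_{(t,x)}$, along a fixed curve $h\in\mathcal X$, into a purely time-dependent operator that is BVRC in the Vladimirov pseudo-distance and satisfies a linear growth bound; this is precisely what feeds the earlier machinery (Theorem \ref{Theorem 3.2}, Corollary \ref{Corollaire 3.1}).

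For $(a)$, I would first note convexity: given $u_{\zeta_1},u_{\zeta_2}\in\mathcal X$ and $\theta\in[0,1]$, convexity of the values $X(s)$ gives $\theta\zeta_1+(1-\theta)\zeta_2\in S^1_X$, and $\theta u_{\zeta_1}+(1-\theta)u_{\zeta_2}=u_{\theta\zeta_1+(1-\theta)\zeta_2}$. Equi-absolute continuity then follows from the pointwise selection bound $\|\zeta(s)\|\le\gamma(s)$: for $\tau\le t$ one has $\|u_\zeta(t)-u_\zeta(\tau)\|\le\int_\tau^t\gamma(s)\,ds$, and since $\gamma\in L^1(I,\mathbb R;\lambda)$ the right-hand side tends to $0$ uniformly in $\zeta$ as $t-\tau\to0$. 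For compactness I would argue as in Theorem \ref{th4.1}: the pointwise ranges $\{u_\zeta(t):\zeta\in S^1_X\}\subset x_0+\int_0^t X(s)\,ds$ are relatively compact because the convex compact valued integral $\int_0^t X(s)\,ds$ is compact (cf.\ \cite{Cas2}); combined with the equicontinuity just obtained, Arzel\`a--Ascoli yields relative compactness in $\mathcal C(I,E)$, while closedness follows from the weak compactness of $S^1_X$ given in Theorem \ref{Theorem 3.1}.

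The substance lies in $(b)$, and the key point is that every estimate is already \emph{uniform} over $\mathcal X$ thanks to the single dominating majorant $\gamma$. Fix $h=u_\zeta\in\mathcal X$. For $0\le\tau\le t\le T$,
$$\|h(t)-h(\tau)\|=\Big\|\int_\tau^t\zeta(s)\,ds\Big\|\le\int_\tau^t\gamma(s)\,ds,$$
so hypothesis $(\mathcal H_1)$ gives
$$dis\big(A_{(t,h(t))},A_{(\tau,h(\tau))}\big)\le r(t)-r(\tau)+\int_\tau^t\gamma(s)\,ds.$$
I would then set $\rho(t):=r(t)+\int_0^t\gamma(s)\,ds$, which is nondecreasing (both summands are), right continuous (since $r$ is right continuous by hypothesis and $t\mapsto\int_0^t\gamma$ is absolutely continuous, hence continuous), and independent of the particular $h$; by construction $dis(A_{(t,h(t))},A_{(\tau,h(\tau))})\le\rho(t)-\rho(\tau)$, which is exactly the claimed equi-BVRC property. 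For the growth estimate, integrability of $\gamma$ yields the uniform bound $\|h(t)\|\le\|x_0\|+\int_0^T\gamma(s)\,ds=:B$ for every $t\in I$ and every $h\in\mathcal X$; inserting this into $(\mathcal H_2)$ gives $\|A^0_{(t,h(t))}y\|\le c(1+B+\|y\|)\le d(1+\|y\|)$ with $d:=c(1+B)$, since $1+B\ge1$.

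The argument is thus routine once these uniform bounds are assembled; the only point demanding care is precisely the uniformity over $\mathcal X$ (the \emph{equi} in the statement), which the single integrable majorant $\gamma$ secures, together with the mild but essential verification that right continuity of $r$ is inherited by $\rho$ through the continuity of $t\mapsto\int_0^t\gamma$.
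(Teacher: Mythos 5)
Your proof is correct and takes essentially the same route as the paper's: part $(a)$ via the compactness of the convex compact valued integral $\int_0^t X(s)\,ds$ together with the equi-absolute continuity estimate $\|u_\zeta(t)-u_\zeta(\tau)\|\leq\int_\tau^t\gamma(s)\,ds$, and part $(b)$ via exactly the same function $\rho(t)=r(t)+\int_0^t\gamma(s)\,ds$ and the uniform bound on $\|h(t)\|$ plugged into $(\mathcal H_2)$. You merely spell out details the paper leaves implicit (convexity, the Arzel\`a--Ascoli/closedness step, and the explicit constant $d=c(1+B)$), which is fine.
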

\proof
 the proof of $(a)$   is classical using the compactness of the convex compact valued integral  $\int_0 ^t  X(s) ds$ (cf \cite{Cas2}).
 It is obvious that  for any $\zeta \in S ^1_X$, $\|u_\zeta (t) -u_\zeta(\tau)\|\leq\int_\tau ^t \gamma(s) ds$,  $0 \leq \tau \leq t \leq
 T$.
\\
$(b)$    The time-dependent maximal monotone operator  $ A_{(t,
h(t))}$ is  BVRC in  variation. Indeed, for all $0 \leq \tau \leq t
\leq T$, we have   by  $(\mathcal{H}_1)$
\begin{equation}\label{a1}
 dis (  A_{(t, h(t) )} ,    A_{(\tau, h(\tau) )} )
\leq r(t) -r(\tau)  + \|h(t)-h(\tau)\|
\leq  r(t) -r(\tau) +   \int_\tau ^t  \gamma(s)  ds  \\
= \rho (t) -\rho(\tau),
\end{equation}
where $\rho(t) = r(t)  +\int_0 ^t \gamma(s)  ds$, for all $t \in I$.
So $\rho$ is  nondecreasing right  continuous on $I$. Furthermore,
by $(\mathcal{H}_2)$ we have
\begin{equation}\label{a2}
   \|A ^0_{(t, h(t) )} y \| \leq  c( 1+ \|h(t) \|+ \|y\| )
\leq d(1+\|y\|)
\end{equation}
for all $y \in D( A_{(t, h(t) )} )$,  where $d$ is a nonnegative
generic constant, because $h$ is uniformly bounded. \finproof

\begin{thm}\label{Theorem 4.8}
 Let for every $(t, x)\in I\times E$, $ A_{(t, x)} :  D ( A_{(t, x)}) \subset E \longrightarrow 2^E$ be a maximal monotone operator
 satisfying $(\mathcal{H}_1)$, $(\mathcal{H}_2)$ and
    \\
$(\mathcal  {H}_3)$ $D(A_{(t, x)}) \subset X(t) \subset \gamma (t)
\overline B_E$ for all $(t, x) \in I\times E$, where $X : I
\rightrightarrows E$ is a convex compact valued  Lebesgue-measurable
multi-mapping and $\gamma$  is a nonnegative $L^1(I,
\mathbb{R};\lambda)$-integrable function.
\\
 Set for all $t \in I$  $\rho(t)= r(t) +\int_0^t \gamma(s)ds$  and   $\mu = \lambda +d\rho$  and
let $f : I\times E\times E \to E$ be such that for every $x, y \in
E$, the mapping $f(\cdot, x, y)$ is $\mathcal{B}(I)$-measurable and
for every $t\in I$, the mapping $f(t, \cdot, \cdot) $ is continuous
on $E\times E$ and satisfying for some nonnegative constant $M$
\\
(i)  $\|f(t, x, y)\| \leq M$   for all $(t,  x, y)\in I\times
E\times E$;
\\
(ii) $\|f(t, z, x)-f(t, z, y)\| \leq M \|x-y\|$ for all $(t, z, x,
y) \in I\times E\times E\times E$.
\\
Assume further that   there is  $ \beta \in ]0, 1[$  such that
$\forall t \in I$, $0\leq  2M  \frac{ d\lambda} {d\mu} (t)d\mu
(\{t\}) \leq \beta < 1$.
\\
Then, for any    $(x_0,u_0)\in E\times  D (  A_{(0, x_0)}) $ there
exist  an absolutely continuous mapping $x :   I \longrightarrow E$
and a BVRC  mapping $u:    I \to E$ with density $\frac {du} {d\mu
}$ with respect to $\mu$,
    such that
    \[
    \left\{ \begin{array}{lll}
    x(t) = x_0+ \int_0^t u(s)ds\;\;\; \forall t \in   I;\\
    x(0)=x_{0},\; u(0) =u_0;\\
    u(t) \in  D (A_{(t, x(t) )})\;\;\;  \forall t \in I;\\
    -  \frac{ du} {d\mu}(t)\in    A_{(t, x(t) )} u(t) + f(t, x(t), u(t))\frac{ d\lambda} {d\mu}(t)\;\;\;  \mu-a.e.\, t\in   I.\\
\end{array}
    \right.
    \]
\end{thm}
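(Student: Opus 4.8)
The plan is to freeze the state variable inside both the operator and the perturbation, solve the resulting purely time-dependent problem by Corollary \ref{Corollaire 3.1}, and recover the coupling through a Schauder fixed point, exactly in the spirit of the proof of Theorem \ref{th4.1}. By Lemma \ref{lem4.3}(a) the set
$$\mathcal{X} = \Big\{ u_\zeta \in \mathcal{C}(I, E) : u_\zeta(t) = x_0 + \int_0^t \zeta(s)\,ds,\ \zeta \in S^1_X \Big\}$$
is convex, compact in $\mathcal{C}(I,E)$ and equi-absolutely continuous, and every $h \in \mathcal{X}$ is uniformly bounded. Fix $h \in \mathcal{X}$. By Lemma \ref{lem4.3}(b) the frozen operator $A^h(t) := A_{(t, h(t))}$ is BVRC in variation with retraction $\rho$, satisfying $dis(A^h(t), A^h(\tau)) \leq \rho(t) - \rho(\tau)$ and $\|(A^h)^0(t,y)\| \leq d(1 + \|y\|)$ for $y \in D(A^h(t))$, with $d$ independent of $h$. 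Since $D(A^h(t)) \subset X(t) \subset \gamma(t)\overline{B}_E$ and $f_h(t, y) := f(t, h(t), y)$ is bounded by $M$ and $M$-Lipschitz in $y$, Corollary \ref{Corollaire 3.1} (applied with $\mu = \lambda + d\rho$ in place of $\nu$, the smallness condition being the standing hypothesis) yields a unique BVRC solution $u_h$ of the frozen inclusion, with the uniform velocity estimate $\|\frac{du_h}{d\mu}(t)\| \leq K$ $\mu$-a.e., $K$ depending only on the data. Because $u_h(t) \in D(A^h(t)) \subset X(t)$, the map $\psi(h)(t) := x_0 + \int_0^t u_h(s)\,ds$ sends $\mathcal{X}$ into $\mathcal{X}$.

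It then suffices to prove that $\psi : \mathcal{X} \to \mathcal{X}$ is continuous and to apply Schauder's fixed point theorem on the convex compact set $\mathcal{X}$: a fixed point $h = \psi(h)$ gives $h(t) = x_0 + \int_0^t u_h(s)\,ds$, so that $x := h$ and $u := u_h$ solve the coupled system. For continuity I would take $h_n \to h$ uniformly in $\mathcal{X}$ and show that $u_{h_n} \to u_h$ pointwise on $I$; the conclusion $\psi(h_n) \to \psi(h)$ in $\mathcal{C}(I,E)$ then follows from the domination $\|u_{h_n}(s) - u_h(s)\| \leq 2\gamma(s)$ and Lebesgue's theorem, as in Theorem \ref{th4.1}.

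To establish the pointwise convergence $u_{h_n} \to u_h$, I would argue as in Steps 2--3 of Theorem \ref{Theorem 3.2}. The sequence $(u_{h_n})$ is equi-BVRC with $\|\frac{du_{h_n}}{d\mu}\|_\infty \leq K$ uniformly, and for each $t$ the set $(u_{h_n}(t))$ lies in the compact set $X(t)$, hence is relatively compact; by Theorem \ref{Theorem 3.1} and Helly's principle a subsequence converges pointwise to a BVRC map $u$, while $(\frac{du_{h_n}}{d\mu})$ converges weakly in $L^1(I, E; \mu)$ to $\frac{du}{d\mu}$. The decisive new ingredient is the operator convergence: hypothesis $(\mathcal{H}_1)$ gives
$$dis\big(A_{(t, h_n(t))}, A_{(t, h(t))}\big) \leq \|h_n(t) - h(t)\| \longrightarrow 0$$
uniformly in $t$, so Lemma \ref{lem2.2} yields $u(t) \in D(A_{(t,h(t))})$ for every $t$, and Lemma \ref{lem2.4} furnishes, for each $\eta \in D(A_{(t,h(t))})$, approximating $\zeta_n \in D(A_{(t,h_n(t))})$ with $\zeta_n \to \eta$ and $A^0_{(t,h_n(t))}\zeta_n \to A^0_{(t,h(t))}\eta$. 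Combining the monotonicity inequalities for $A_{(t,h_n(t))}$, the Komlos (or Mazur) convergence of $(\frac{du_{h_n}}{d\mu})$, and the joint continuity of $f$ (giving $f(t, h_n(t), u_{h_n}(t)) \to f(t, h(t), u(t))$), I would pass to the limit exactly as in \eqref{3.19}--\eqref{3.21} to obtain
$$-\frac{du}{d\mu}(t) \in A_{(t,h(t))}u(t) + f(t,h(t),u(t))\frac{d\lambda}{d\mu}(t)\quad \mu\text{-a.e.}$$
By the uniqueness in Corollary \ref{Corollaire 3.1}, $u = u_h$, so the whole sequence (not merely a subsequence) converges, proving continuity of $\psi$.

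The main obstacle is precisely this last limit passage, where the operator $A_{(t,h_n(t))}$ and the perturbation $f(\cdot, h_n(\cdot), u_{h_n}(\cdot))$ vary simultaneously; the uniform convergence $dis(A_{(t,h_n(t))}, A_{(t,h(t))}) \to 0$ supplied by $(\mathcal{H}_1)$ is what keeps Lemmas \ref{lem2.2} and \ref{lem2.4} applicable and lets the Komlos argument close. Everything else --- the uniform bounds, the equi-BVRC estimate, and the pointwise relative compactness --- is inherited from Lemma \ref{lem4.3} and the machinery already developed for Theorems \ref{Theorem 3.2} and \ref{th4.1}.
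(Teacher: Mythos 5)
Your proposal is correct and follows essentially the same route as the paper's own proof: freezing the state via Lemma \ref{lem4.3}, solving the frozen problem with Corollary \ref{Corollaire 3.1} (with $\mu$ in place of $\nu$), and establishing continuity of $\psi(h)(t)=x_0+\int_0^t u_h(s)\,ds$ on the convex compact set $\mathcal{X}$ through the same Helly/weak-compactness/Komlos limit passage, using $(\mathcal{H}_1)$ to control $dis\big(A_{(t,h_n(t))},A_{(t,h(t))}\big)$ and Lemmas \ref{lem2.1}, \ref{lem2.2}, \ref{lem2.4} exactly as the paper does, before concluding with a fixed point. The only detail worth spelling out (which the paper makes explicit) is that applying Lemma \ref{lem2.2} requires first noting that $y_n=A^0_{(t,h_n(t))}u_{h_n}(t)$ is bounded, hence has a weakly convergent subsequence.
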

 \proof
Let
 $$\mathcal X := \big\{ u_{\zeta} \in \mathcal{C}(I, E): \;  u_{\zeta} (t) =x_0+  \int_0 ^t  \zeta(s) ds, \;\forall t \in I,\; \zeta \in S ^1_X\big\}.$$
Then by Lemma \ref{lem4.3} $(a)$ , $\mathcal X$ is a convex compact
subset of $\mathcal{C}(I, E) $ and is equi-absolutely continuous.
For each $h \in \mathcal X$, by Lemma \ref{lem4.3} $(b)$,  the
time-dependent maximal monotone operator $A_{(t, h(t) )}$ satisfies
relations \eqref{a1} and \eqref{a2}. By applying Corollary
\ref{Corollaire 3.1} with $ \nu $ replaced by $\mu$, for any $h \in
\mathcal X$, there is a unique BVRC solution of the problem
$$
\begin{cases}
u_h(0)=u_0;\\
 u_h(t)\in D(  A_{(t, h(t) )} )\;\;\;\forall t\in I;\\
   -\displaystyle\frac{du_h}{d\mu }(t)\in  A_{(t, h(t) )} u_h(t)+ f(t, h(t),u_h(t))  \frac{d\lambda} {d\mu}(t)  \;\;\;\mu-a.e.\,t\in
   I,
\end{cases}
$$
with $u_h(t) = u_0 + \int_{]0, t ]}   \frac{  du_h}{d\mu}(s) d\mu(s)$ for all $t \in I$ and      $\|\frac{  du_h}{d\mu}(t) \|  \leq  K$ $\mu$-a.e.
 Indeed, for any fixed $h \in \mathcal X $, the mapping
$f_h(t, x) = f(t , h(t), x)$ satisfies $\|f_h(t, x)\| \leq M$ for
all $(t, x) \in I\times E$, $\|f_h(t, x) -f_h(t, y)\|= \|f(t, h(t),
x) -   f(t, h(t), y)\| \leq M\|x-y\|$ for all $(t, x, y) \in I\times
E\times E$,    while  the estimate of the velocity is given the
proof of Theorem \ref{Theorem 3.2}.
 Now for each $h \in {\mathcal X}$, let us
consider the mapping
$$\psi(h) (t) := x_0 + \int_0 ^t  u_h(s) ds\;\;\;    \forall t \in I.$$
From $(\mathcal H_3)$, it is clear that $ \Phi(h) \in {\mathcal X}$.
Our aim is to prove that  $\psi : \mathcal X  \longrightarrow
\mathcal X $ is continuous in order  to obtain the existence theorem
by  a fixed point approach.   It is enough  to show that, if $(h_n)$
converges uniformly to $h$ in $\mathcal X$, then the sequence
$(u_{h_n})$ of BVRC solutions associated with $(h_n)$ of problems
 $$
\begin{cases}
u_{h_n}(0)=u_0;\\
 u_{h_n}(t)\in D(A_{(t, h_n(t) )})\;\;\;\forall t\in I;\\
   -\displaystyle\frac{du_{h_n} }{d\mu }(t)\in A_{(t, h_n(t) )} u_{h_n} (t)+ f(t, h_n(t),u_{h_n}(t))  \frac{d\lambda} {d\mu}(t) \hskip 4pt \mu-a.e.\,t\in
   I.
\end{cases}
$$
pointwise converge to  the BVRC solution $u_h$  associated with $h$
of the problem
 $$
\begin{cases}
u_h(0)=u_0;\\
 u_h(t)\in D(A_{(t, h(t)))})\;\;\;\forall t\in I;\\
   -\displaystyle\frac{du_h}{d\mu }(t)\in A_{(t, h(t) )}  u_h(t)+ f(t,h(t),u_h(t))  \frac{d\lambda} {d\mu}(t)  \;\;\;\mu-a.e.\,t\in
   I.
\end{cases}
$$
As  $(u_{h_n})$ is uniformly bounded and bounded in variation since
$\|u_{h_n}(t) -u_{h_n}(\tau)\| \leq K(\mu (]\tau, t])$, for $\tau
\leq t $ and   $u_{h_n}(t) \in D(A_{(t, h_n(t) )}) \subset X(t)
\subset \gamma(t) \overline B_E$, for all $t \in I$, and hence it is
relatively compact, by  Theorem \ref{Theorem 3.1}, we may assume
that $(u_{h_n})$ pointwise converges to a BV mapping $u(\cdot)$.
Now, since for all $t\in I$, $u_{h_n }(t) = u_0+ \int_{]0, t]}
\frac{ du_{h_n }} {d\mu} d\mu$ and $\frac{ du_{h_n }} {d\mu} (s) \in
K {\overline B}_E$ $\mu$-a.e, we may assume that $ (\frac{ du_{h_n
}} {d\mu}) $ converges weakly in $ L^1(I, E; \mu)$ to $w \in L^1(I,
E; \mu)$ with $w(t)\in K {\overline B}_E$ $\mu$-a.e,  so that
$$ \lim_{n\to\infty} u_{h_n }(t)=  u_0 + \int_{]0, t]}   w(s)  d\mu(s)\;\;\;    \forall t \in I.$$
By identifying the limits, we get
$$u(t) =  u_0 + \int_{]0, t]}   w(s)  d\mu(s) \;\;\;    \forall t \in I,$$
with  $\frac{du} {d\mu}  = w$.   Whence, using the hypothesis on
$f$, we obtain $$ \lim_{n\to\infty} f(t,  h_n (t), u_{h_n}(t)) =
f(t, h (t), u(t))\;\;\; \forall t \in I.$$  As consequence,
$\big(f(\cdot, h_n(\cdot), u_{h_n} (\cdot)) \frac{d\lambda} {d\mu}
(\cdot)\big)$ pointwise converges to $f(\cdot, h(\cdot), u (\cdot))
\frac{d\lambda} {d\mu} (\cdot) $. Since $ ( \frac{ du_{h_n }}
{d\mu})$ weakly converges to $\frac{du} {d\mu} $ in $ L^1(I, E;
\mu)$, we may assume that
 it Komlos
converges to  $\frac{du} {d\mu}$. For simplicity, set for all $t\in
I$, $z_n(t) = f(t,  h_n (t),  u_{h_n}( t) )\frac{d\lambda} {d\mu}
(t) $ and $z(t) = f(t, h (t), u_h(t))\frac{d\lambda} {d\mu} (t) $.
Hence  $(\frac{ du_{h_n }} {d\mu} + g_n(t))$ Komlos converges to
$\frac{du} {d\mu} + g(t)$. Further, we note that $u(t) \in  D  (
A_{(t, h(t) )})$ for all  $t  \in   I$. Indeed  we have $dis( A_{(t,
h_n(t)}, A_{(t, h(t) )}) \leq \|h_n(t)-h(t)\| \to 0$ and
    it is clear that  $(y_n  = A ^0_{(t, h_n(t)} u_{h_n}(t) )$ is  bounded, hence relatively weakly compact.
    By applying Lemma \ref{lem2.2} to $u_{h_n}(t) \to u(t)$ and to a convergent subsequence of  $(y_n)$
    we conclude that $u(t) \in D  ( A_{(t, h(t) )})$.  Now, apply Lemma \ref{lem2.4} to
    $A_{(t, h_n(t))}$ and  $A_{(t, h(t)} )$ to find a sequence $(\eta_n)$ such that  such that
    $\eta_n \in D( A_{(t, h_n(t) )}) , \hskip 2pt   \eta_n \to \eta,  \hskip 2pt  A ^0_{(t, h_n(t) }  \eta_n  \to    A ^0_{(t, h(t) )} u(t) $.
    From the inclusion
    \begin{equation}\label{4.8.1}-\frac{ du_{h_n }} {d\mu}(t)    \in    A_{(t, h_n(t) )} u_{h_n}(t)+
    z_n(t)\;\;\;\mu-a.e.,
    \end{equation}
    we get by the monotonicity of $A_{(t,x)}$,
    \begin{equation}\label{4.8.2}\langle  \frac{ du_{h_n }} {d\mu}(t)+z_n(t) , u_{h_n}(t) -\eta_n \rangle
    \leq  \big\langle A^0_{ (t, h_n(t))} \eta_n, \eta_n -u_{h_n}(t) \big\rangle\;\;\;\mu-a.e..
    \end{equation}
   On the other hand, since
    $$
    \big\langle \frac{ du_{h_n }} {d\mu}(t) +z_n(t) , u(t) - \eta
    \big\rangle\\
    = \big\langle  \frac{ du_{h_n }} {d\mu}(t) +z_n(t) ,  u_{h_n}(t)
    -\eta_n\big\rangle
    + \big\langle \frac{ du_{h_n }} {d\mu}(t) +z_n(t),  u(t)  -u_{h_n}(t)-(\eta-\eta_n)
    \big\rangle,
    $$
    we can write
    \begin{eqnarray*}
    &&\frac{1}{n}  \sum_{j= 1}^n \big\langle \frac{ du_{h_j }} {d\mu} (t)+z_j(t) ,  u(t) - \eta  \big\rangle
    =
    \frac{1}{n}  \sum_{j= 1}^n \big\langle  \frac{ du_{h_j }} {d\mu} (t)+z_j(t)  ,
    u_{h_j} (t)-\eta_j \big\rangle \\&+& \frac{1}{n}  \sum_{j= 1}^n
    \big\langle \frac{ du_{h_j }} {d\mu} (t) +z_j(t),  u(t)  -u_{h_j} (t)
    \big\rangle
    +\sum_{j= 1}^n
    \big\langle \frac{ du_{h_j }} {d\mu} (t) +z_j(t),  \eta_j  -\eta
    \big\rangle,
    \end{eqnarray*}
    so that
    $$\frac{1}{n}  \sum_{j= 1}^n \big\langle   \frac{ du_{h_j }} {d\mu}
    (t)+z_j(t),  u(t) - \eta  \big\rangle\\
    \leq
    \frac{1}{n}  \sum_{j= 1}^n \big\langle  A^0_{( t , h_j(t))}  \eta_j , \eta_j -
    u_{h_j}(t) \big\rangle  +(K +M) \frac{1}{n} \sum_{j=
        1}^n \|u(t)  -u_{h_j}(t))\|.
    $$
    $$
    +(K +M)   \frac{1}{n} \sum_{j=
        1}^n \|\eta_j- \eta\|.
    $$
    Passing to the limit   when
    $n\rightarrow \infty$,  this last inequality gives immediately
    $$\big\langle  \frac{du} {d\mu} (t)+z(t) , u(t) - \eta  \big\rangle \leq \big\langle A^0_{(t, h(t))}
    \eta , \eta-v(t) \big\rangle\;\;\;\mu-a.e.$$
    As a consequence,  by Lemma  \ref{lem2.1},
    we get
    $-\frac{du} {d\mu} (t) \in A_{(t, h(t) )} u(t) +z(t)$ $\mu$-a.e.
    with $u(t )\in D(A_{(t, h(t))})$ for all $t \in   I$,
    so that by uniqueness
    $u = u_h$. That is, for all
$t\in I$,
$$\psi (h_n) (t) -  \psi (h) (t) =   \int_0 ^t    (u_{h_n}(s)- u_{h}(s))  ds,$$
and since  $(u _{h_n}(s)- u_{h}(s))  \rightarrow 0$  and is
pointwise  bounded;  $\|u_{h_n}(s)- u_{h}(s)\|  \leq 2\gamma(s) $,
we conclude by   Lebesgue dominated convergence theorem, that
$$\sup_{t \in I}  \|\psi (h_n) (t) -  \psi (h) (t) \|  \leq     \int_0^T \|u_{h_n}(s)- u_{h}(s)\|ds \longrightarrow 0,$$
 so that  $\psi (h_n)  -  \psi (h) \rightarrow  0$  in $\mathcal{C}(I, E)$.
Since $\psi :  \mathcal X \rightarrow  \mathcal X $ is continuous
 it has a fixed point, say $h = \psi(h)
\in \mathcal X $, that means
$$
\begin{cases}
h(t) = \psi(h) (t) = x_0+\displaystyle\int_0^t u_h (s) ds \;\;\;\forall t\in I;\\
u_h(0)=u_0;\\
 u_h(t)\in D(A_{ (t, h(t) )} )\;\;\;\forall t\in I;\\
   -\displaystyle\frac{du_h}{d\mu }(t)\in A_{(t, h(t) )} u_h(t)+ f(t, h(t),u_h(t))  \frac{d\lambda} {d\mu}(t)  \;\;\;\mu-a.e.\,t\in
   I.
\end{cases}
$$
\finproof

As a result, we have the following corollary.

\begin{coro} Let $C: I\times E   \rightrightarrows E$ be a   convex compact  valued multi-mapping  such that
\\
$(i)$ $d_H (C(t, x), C(\tau, y)) \leq |r(t) - r(\tau)| +\|x-y\|$,
for all  $\tau, t \in  I $ and for all $(x, y) \in E\times E$.
\\
$(ii)$ $C(t, x) \subset  X(t) \subset \gamma(t)\overline B_E$ for
all $(t, x)  \in I\times E$, where $X : I \rightrightarrows E$ is a
convex compact valued  Lebesgue-measurable multi-mapping and
$\gamma: I\longrightarrow \mathbb{R}$
 is a nonnegative $L^1(I, \mathbb{R};\lambda)$-integrable function.
\\
Let for all $t\in I$, $\rho(t)= r(t) +\int_0^t \gamma(s)ds$ and $\mu
= \lambda +d\rho$  and let $f : I\times E\times E \to E$ be such
that for every $x, y \in E$, the mapping $f(\cdot, x, y)$ is
$\mathcal{B}(I)$-measurable and for every $t\in I$, the mapping
$f(t, \cdot, \cdot) $ is continuous on $E\times E$ and satisfying
for some nonnegative constant $M$
\\
(i)  $\|f(t, x, y)\| \leq M$   for all $(t,  x, y)\in
I\times E\times E$,
\\
(ii) $\|f(t, z, x)-f(t, z, y)\| \leq M \|x-y\|$ for all $(t, z, x,
y) \in I\times E\times E\times E$.
\\
Assume further that   there is  $ \beta \in ]0, 1[$  such that
$\forall t \in I$, $0\leq  2M  \frac{ d\lambda} {d\mu} (t)d\mu
(\{t\}) \leq \beta < 1$.
\\
Then, for any    $(x_0,u_0)\in E\times  C(0, x_0)$
there exist  an absolutely continuous $x :   I \to E$ and a BVRC   $u:    I \to E$ with density $\frac
{du} {d\mu }$ with respect to $\mu$,
    such that
    \[
    \left\{ \begin{array}{lll}
    x(t) = x_0+ \int_0^t u(s)ds\;\;\;\forall t \in   I;\\
    x(0)=x_{0}, u(0) =u_0;\\
    u(t) \in  C(t, x(t))\;\;\;   \forall t \in   I;\\
    -\frac{du} {d\mu} (t)  \in   N_{C(t, x(t))} u(t) + f(t, x(t), u(t))\frac{d\lambda} {d\mu} (t)\;\;\;   \mu-a.e. \,t\in   I.\\
 \end{array}
    \right.
    \]
\end{coro}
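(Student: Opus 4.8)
\proof
The plan is to obtain this result as a direct specialization of Theorem \ref{Theorem 4.8}, taking the time- and state-dependent maximal monotone operator to be the normal cone of the moving set. Concretely, I would set, for every $(t,x)\in I\times E$,
$$A_{(t,x)} := \partial\delta_{C(t,x)} = N_{C(t,x)},$$
which is maximal monotone because $C(t,x)$ is nonempty closed convex, and whose domain is exactly $D(A_{(t,x)}) = C(t,x)$. It then remains to check that the family $\big(A_{(t,x)}\big)$ satisfies the three structural hypotheses $(\mathcal{H}_1)$, $(\mathcal{H}_2)$, $(\mathcal{H}_3)$ of Theorem \ref{Theorem 4.8}, after which the conclusion will follow verbatim.

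First I would verify $(\mathcal{H}_1)$. As recalled in the Introduction, for normal cones of closed convex sets the Vladimirov pseudo-distance collapses to the Hausdorff distance, namely $dis(N_{C_1}, N_{C_2}) = d_H(C_1, C_2)$. Applying this with $C_1 = C(t,x)$ and $C_2 = C(\tau,y)$ and invoking assumption $(i)$ gives
$$dis\big(A_{(t,x)}, A_{(\tau,y)}\big) = d_H\big(C(t,x), C(\tau,y)\big) \leq |r(t)-r(\tau)| + \|x-y\|,$$
which is exactly $(\mathcal{H}_1)$.

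Next I would check $(\mathcal{H}_2)$ and $(\mathcal{H}_3)$, both of which are essentially immediate. For any $y \in D(A_{(t,x)}) = C(t,x)$ the normal cone $N_{C(t,x)}(y)$ contains the origin, so its element of minimal norm is $A^0_{(t,x)}(y) = 0$; hence $\|A^0_{(t,x)}(y)\| = 0 \leq c(1+\|x\|+\|y\|)$ holds for any $c\geq 0$, giving $(\mathcal{H}_2)$. Hypothesis $(\mathcal{H}_3)$ is nothing but assumption $(ii)$, since $D(A_{(t,x)}) = C(t,x) \subset X(t) \subset \gamma(t)\overline{B}_E$ for all $(t,x)\in I\times E$.

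Finally, since the map $f$ is assumed to satisfy precisely the measurability, boundedness and Lipschitz conditions $(i)$--$(ii)$, and since the compatibility condition $0 \leq 2M\,\frac{d\lambda}{d\mu}(t)\,d\mu(\{t\}) \leq \beta < 1$ is assumed, every hypothesis of Theorem \ref{Theorem 4.8} is met. Applying that theorem produces an absolutely continuous $x$ and a BVRC $u$ with density $\frac{du}{d\mu}$ satisfying the coupled system, the inclusion $-\frac{du}{d\mu}(t) \in A_{(t,x(t))}\,u(t) + f(t,x(t),u(t))\frac{d\lambda}{d\mu}(t)$ reading, by definition of $A$, as $-\frac{du}{d\mu}(t) \in N_{C(t,x(t))}(u(t)) + f(t,x(t),u(t))\frac{d\lambda}{d\mu}(t)$. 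I do not anticipate a genuine obstacle here; the only point that deserves a moment's care is the validity of the pseudo-distance identity in the present state-dependent framework, which causes no difficulty since that identity is a pointwise statement about a single pair of closed convex sets and therefore applies to $C(t,x)$ and $C(\tau,y)$ unchanged.
\finproof
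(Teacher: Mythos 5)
Your proposal is correct and coincides with the paper's intended argument: the corollary is stated there as an immediate consequence of Theorem \ref{Theorem 4.8}, obtained precisely by taking $A_{(t,x)} = \partial\delta_{C(t,x)} = N_{C(t,x)}$, with $(\mathcal{H}_1)$ following from the identity $dis(N_{C_1},N_{C_2}) = d_H(C_1,C_2)$ recalled in the Introduction, $(\mathcal{H}_2)$ holding trivially since $A^0_{(t,x)}(y)=0$ for $y\in C(t,x)$, and $(\mathcal{H}_3)$ being assumption $(ii)$. Your verification of these hypotheses is exactly the specialization the authors leave implicit.
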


To finish the paper  we develop  a control problem where the
controls are BVRC mappings. The tools  we give has some importance
since they allow to treat some second order or some coupled system
with a time and state dependent operator $A(t, x)$.
\begin{thm}  Let for every $(t, x)\in I\times E$, $ A_{(t, x)} :  D ( A_{(t, x)}) \subset E \longrightarrow2^E$ be a maximal monotone operator satisfying
    $(\mathcal H_1)$,
    $(\mathcal H_2)$ and
    \\
    $(\mathcal H_3) ^{'} $  For any bounded set $B \subset E$,   $\bigcup_{x\in B}  D( A_{(t, x)}) $ is relatively compact.
\\
Let $\mathcal  X  := \{ u : I \to Q: \;\|u(t) -u(s)\|  \leq \rho(t)
-\rho(\tau), \;  \tau, t  \in I \,(\tau \leq t)\} $, where $Q$ is
compact subset of $E$ and $\rho : I \longrightarrow [0, +\infty[$ is
nondecreasing and right continuous. Let $\mu = \lambda+ dr + d\rho$
where $dr$  and $d\rho$ are the Stieljes measures associated with
$r$ and $\rho$. Let $f : I\times E\times E \longrightarrow E$ be
such that for every $x, y \in E$, the mapping $f(\cdot, x, y)$ is
$\mathcal{B}(I)$-measurable and for every $t\in I$, the mapping
$f(t, \cdot, \cdot) $ is continuous on $E\times E$ and satisfying
for some nonnegative constant $M$,
\\
(i)  $\|f(t, x, y)\| \leq M$   for all $(t,  x, y)\in
I\times E\times E$,
\\
(ii) $\|f(t, z ,x)-f(t, z, y)\| \leq M \|x-y\|$ for all $(t, z, x,
y) \in I\times E\times E\times E$.
\\
Assume further that   there is  $ \beta \in ]0, 1[$  such that
$\forall t \in I$, $0\leq  2M  \frac{ d\lambda} {d\mu} (t)d\mu
(\{t\}) \leq \beta < 1$.
\\
Then the following hold:
\\
$(a)$ $\mathcal  X$ is sequentially compact with respect to the pointwise convergence.
\\
$(b)$ For each $h \in \mathcal X$, the maximal monotone operator, $t
\mapsto  A_{ (t, h(t) )}$ is equi-BVRC in variation,
$$ dist (  A_{ (t, h(t) } ),    A_{ (\tau, h(\tau)} ) \leq r(t) -r(\tau) + \rho(t) -\rho(\tau)= d(r +\rho) (]\tau, t])\;(\tau\leq t).$$
$(c)$ For each $x_0\in Q$, for each $h \in \mathcal  X$ with $h(0) =
x_0$, there is a unique BVRC mapping $u_h : I \longrightarrow E$
satisfying
$$-\frac{ du_h} {d\mu} (t) \in  A_{ (t, h(t) )}u_h(t) + f(t, h(t), u_h(t))  \frac{ d\lambda} {d\mu}
(t)\;\;\; \mu-a.e.\, t \in I.
$$
$(d)$ the mapping $h \mapsto u_h$ from $\mathcal X$  to $ B
^{1-var}(I, E)$ \footnote{ $B ^{1-var}(I, E)$ denotes the space of
bounded variation $E$-valued mappings} is continuous for the
pointwise convergence, i.e, if $h_n \longrightarrow h$ then $u_{h_n}
\longrightarrow u_h $ pointwise.
\end{thm}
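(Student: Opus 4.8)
The plan is to dispatch the four assertions in turn, the first three being essentially routine applications of the machinery already developed, and to concentrate the genuine work on the continuity statement $(d)$. For $(a)$, I would observe that every $u\in\mathcal X$ takes values in the fixed compact set $Q$ and has variation dominated by the single nondecreasing right continuous function $\rho$, so $\mathcal X$ is a family of equi-BVRC, uniformly bounded, $Q$-valued mappings. Helly's selection principle (as invoked in \cite{Por}) then extracts from any sequence a pointwise convergent subsequence; since $Q$ is closed the limit again takes values in $Q$, and passing to the limit in $\|u(t)-u(s)\|\leq\rho(t)-\rho(s)$ shows it lies in $\mathcal X$, giving sequential compactness for pointwise convergence. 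For $(b)$ I would combine $(\mathcal H_1)$ with the defining estimate of $\mathcal X$: for $\tau\leq t$, $dis(A_{(t,h(t))},A_{(\tau,h(\tau))})\leq r(t)-r(\tau)+\|h(t)-h(\tau)\|\leq r(t)-r(\tau)+\rho(t)-\rho(\tau)=d(r+\rho)(]\tau,t])$, while the linear growth $\|A^0_{(t,h(t))}y\|\leq d(1+\|y\|)$ follows from $(\mathcal H_2)$ because $h(t)\in Q$ keeps $\|h(t)\|$ bounded. This is the computation of Lemma \ref{lem4.3}$(b)$ with $\rho$ in place of $\int_0^{\cdot}\gamma$.

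For $(c)$ I would fix $u_0\in D(A_{(0,x_0)})$ and read the problem as a single time-dependent maximal monotone evolution for the operator $B(t):=A_{(t,h(t))}$. By $(b)$, $B(t)$ satisfies $(H_1)$ with retraction $r+\rho$, hence with the measure $\mu=\lambda+dr+d\rho=\lambda+d(r+\rho)$, and satisfies $(H_2)$; the ball-compactness $(H_3)$ of $\bigcup_{t}D(A_{(t,h(t))})$ comes from $(\mathcal H_3)'$ applied to the bounded set $Q$, since $h(t)\in Q$ forces $D(A_{(t,h(t))})\subset\bigcup_{x\in Q}D(A_{(t,x)})$. Setting $f_h(t,x):=f(t,h(t),x)$, hypotheses $(i)$--$(ii)$ give $\|f_h(t,x)\|\leq M$ and $\|f_h(t,x)-f_h(t,y)\|\leq M\|x-y\|$, and $f_h(\cdot,x)$ is $\mathcal B(I)$-measurable because $h$ is Borel; together with the standing assumption $0\leq 2M\frac{d\lambda}{d\mu}(t)d\mu(\{t\})\leq\beta<1$ this places us exactly under Corollary \ref{Corollaire 3.1} with $\nu$ replaced by $\mu$, which yields the unique BVRC solution $u_h$.

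For $(d)$, the crux, I would reproduce the limit-passage half of the argument in Theorem \ref{Theorem 4.8}. Given $h_n\to h$ pointwise in $\mathcal X$, the solutions $u_{h_n}$ are equi-BVRC with $\|\frac{du_{h_n}}{d\mu}\|\leq K$ $\mu$-a.e.; since $u_{h_n}(t)\in D(A_{(t,h_n(t))})\subset\bigcup_{x\in Q}D(A_{(t,x)})$, the relative compactness in $(\mathcal H_3)'$ makes $(u_{h_n}(t))$ relatively compact for each $t$, so Helly's principle gives a subsequence converging pointwise to a BVRC $u$, while the bounded densities converge weakly in $L^1(I,E;\mu)$ to $\frac{du}{d\mu}$. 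To identify the limit I would cross the monotonicity inequality exactly as in Theorem \ref{Theorem 4.8}: the key inputs are $dis(A_{(t,h_n(t))},A_{(t,h(t))})\leq\|h_n(t)-h(t)\|\to0$, Lemma \ref{lem2.2} to get $u(t)\in D(A_{(t,h(t))})$, Lemma \ref{lem2.4} to produce $\eta_n\in D(A_{(t,h_n(t))})$ with $\eta_n\to\eta$ and $A^0_{(t,h_n(t))}\eta_n\to A^0_{(t,h(t))}\eta$, a Komlos average to pass the weak limit, and the continuity of $f(t,\cdot,\cdot)$ to send $f(t,h_n(t),u_{h_n}(t))\to f(t,h(t),u(t))$. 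Lemma \ref{lem2.1} then converts the resulting inequality into the inclusion, so $u$ solves the problem for $h$; uniqueness from $(c)$ forces $u=u_h$, and since every subsequence has a further subsequence with limit $u_h$, the whole sequence converges pointwise to $u_h$.

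The main obstacle is precisely this limit passage with a \emph{state-dependent} operator: one must let $A_{(t,h_n(t))}\to A_{(t,h(t))}$ and $u_{h_n}(t)\to u(t)$ simultaneously inside the monotonicity inequality, which is why Lemma \ref{lem2.2} and Lemma \ref{lem2.4} must be used together with the Komlos device rather than separately. Everything else reduces cleanly to Helly selection and to Corollary \ref{Corollaire 3.1}.
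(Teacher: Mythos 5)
Your proposal is correct and follows essentially the same route as the paper: Helly's selection principle for $(a)$, the $(\mathcal H_1)$ estimate combined with the definition of $\mathcal X$ for $(b)$, Corollary \ref{Corollaire 3.1} applied to the time-dependent operator $B(t)=A_{(t,h(t))}$ with $\nu$ replaced by $\mu$ for $(c)$, and the Komlos/Lemma \ref{lem2.1}--\ref{lem2.4} limit-passage machinery of Theorem \ref{Theorem 4.8} for $(d)$. The paper's own proof is just a terse pointer to these same ingredients, so your write-up is in effect the intended argument with the details (including the subsequence-of-subsequence step and the implicit initial condition $u_h(0)=u_0\in D(A_{(0,x_0)})$) properly filled in.
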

 \proof $(a)$ The  pointwise compactness   follows from the argument in the proof of Theorem \ref{Theorem 3.1}  using the Helly theorem
 \cite{Por}.\\
$(b)$ is obvious using assumption $(\mathcal H_1)$. \\$(c)$ Since $
A_{ (t, h(t) )}$ is equi-BVRC in variation, and  $D(  A_{ (t, h(t)
)})$ is included in a compact set for all $h \in \mathcal X$  by
assumption $(\mathcal H_3) ^{'} $, by applying  Corollary
\ref{Corollaire 3.1} with $\nu$ replace by $\mu$, there is a unique
BVRC solution $u_h$  to
$$-\frac{ du_h} {d\mu} (t) \in  A_{ (t, h(t) )}u_h(t) + f(t, h(t), u_h(t))
\frac{ d\lambda} {d\mu} (t)\;\;\; \mu-a.e. \,t \in I.$$ $(d) $
follows from the machinery given  via Komlos convergence by noting
 the estimation
 $\frac{ du_h} {d\mu} (t) \in L \overline B_H$, where $L$ is a nonnegative generic constant.
\finproof

\section{ Conclusion}
 We have established existence of
BVRC solutions for  evolution inclusions governed by time dependant
maximal monotone operators. Our results contain novelties with sharp
applications. However, there remain several issues that need full
developments,  for instance,  the existence of BVRC  solution with
different types of perturbation, e.g.  Skorohod problem and when the
perturbation is unbounded closed valued  and Lipschitz. These
considerations lead to several new research  for related problems,
for instance, the differential (equation) (variational inequalities)
(fractional inclusion) coupled with a {\bf time and state dependent
BVRC in variation}  maximal monotone operator. Actually, we are able
to solve some mentioned problems  by combining  some techniques in
Castaing et al  \cite {CGLX} with  those given here.



\end{document}